\newlength{\numone}
\newlength{\widone}
\newlength{\numtwo}
\newlength{\widtwo}
\newtheorem{thm}{Theorem}[section]
\newtheorem{lemma}[thm]{Lemma}
\newtheorem{prop}[thm]{Proposition}
\newtheorem{cor}[thm]{Corollary}
\newtheorem{defin}[thm]{Definition}
\newtheorem{alg}[thm]{Algorithm}
\numberwithin{equation}{section}
\author{\Large{Riccardo W. Maffucci}}
\newcommand{\Addresses}{{
		\footnotesize
		
		R.W.~Maffucci, \textsc{University of Coventry, United Kingdom CV1}\par\nopagebreak\vspace{-0.35cm}
		\textit{E-mail address}, R.W.~Maffucci: \texttt{riccardowm@hotmail.com}}}
\title{\Large{\uppercase{\bf Classification and construction of planar, 3-connected Kronecker products}}}
\date{}
\def\ho{H_{\text{odd}}}
\def\calC{\mathcal{C}}
\def\calP{\mathcal{P}}
\newcommand{\E}{\mathbb{E}}
\begin{document}
\titleformat{\section}
  {\Large\scshape}{\thesection}{1em}{}
\titleformat{\subsection}
  {\large\scshape}{\thesubsection}{1em}{}
\maketitle
\Addresses


\begin{abstract}
We give a complete classification of the Kronecker (i.e. direct) product graphs that are planar and $3$-connected (i.e. $3$-polytopal). They are all of the form 
\[H\wedge K_2,\]
where $H$ is a $2$-connected graph, possibly non-planar, and satisfying specific properties that we will describe.

Our proof is constructive, in the sense that we prescribe how to obtain all such graphs $H$, by adding a few edges in a specific way to a given planar, bipartite graph, that is either $3$-connected, or semi-hyper-$2$-connected.

Moreover, for $H$ planar, we also give a more precise characterisation of this graph, regarding the number of its odd regions, and how they intersect.

If $H\wedge K_2$ is a $3$-polytope, then we have $\delta(H\wedge K_2)=3$, so that the connectivity of $H\wedge K_2$ is $3$, and the connectivity of $H$ is either $2$ or $3$.

We also briefly discuss which Cartesian and strong products are $3$-polytopal.
\end{abstract}
{\bf Keywords:} Kronecker product, Direct product, Connectivity, Planar graph, $3$-polytope, Hyper-connectivity, Algorithm.
\\
{\bf MSC(2010):} 05C75, 05C76, 05C62, 05C10, 05C40, 52B05, 52B10, 05C85, 05C83.

\section{Introduction}
\subsection{Main Results}
A graph is planar if we can sketch it in the plane so that no edges cross, except possibly at vertices. For $k\geq 1$, a graph $G$ is $k$-connected if $|V(G)|>k$ and, however we remove fewer than $k$ vertices from $G$, the resulting graph is connected. We say that a graph has vertex connectivity $k$ if it is $k$-connected but not $k+1$-connected.

The planar, $3$-connected graphs are the ($1$-skeletons of) $3$-polytopes, sometimes also called polyhedra. The regions of a planar, $3$-connected graph are also referred to as faces, corresponding naturally to polyhedral faces. Two polyhedra are homeomorphic if and only if their respective $3$-polytopal graphs are isomorphic. More generally, the regions of a planar, $2$-connected graph are cycles (polygons). We call a region odd/even if it is bounded by a polygon with an odd/even number of sides.

A $3$-polytopal graph has a \textit{unique} embedding into a sphere, as observed by Whitney. It thus has a unique planar embedding, once the `external region' has been chosen. On the other hand, a planar graph $G$ of connectivity $2$ -- apart from a few special cases, e.g. simply a cycle -- may be immersed in the sphere/plane in more than one way. The polygons bounding the regions of $G$ may be different in distinct embeddings. However, $G$ has no odd regions if and only if it is bipartite, and this condition is independent of the embedding.

The Kronecker (also called `direct', or `tensor') product $G=H\wedge J$ of the graphs $H,J$ is defined as the graph of vertex set
\[V(G)=V(H)\times V(J)\]
and edge set
\[E(G)=\{(a,x)(b,y) : ab\in E(H) \text{ and } xy\in E(J)\}.\]

To simplify notation, we will sometimes denote a vertex $(a,x)$ in the Kronecker product as $ax$ when there is no possibility of confusion. The letters $G,H,J$ will indicate a graph, and $\calP$ a $3$-polytopal graph. We write $K_n$, $n\geq 1$ for the complete graph, and $P_n$, $n\geq 1$ for the simple path on $n$ vertices (and $n-1$ edges). We call a path \textit{odd/even} if $n$ is odd/even.

Prior literature \cite{farwal} investigated for which graphs $H,J$ the Kronecker product $H\wedge J$ is planar. The connectivity of Kronecker products has also received recent attention \cite{gujvum,wawu11,wanyan,ekikir}.

In this paper, we answer the following natural question. For which $H,J$ is $H\wedge J$ a $3$-polytope?

For example, let $J=K_2$. If $H$ is the $2n+1$-gonal prism, $n\geq 1$, then $H\wedge K_2$ is the $4n+2$-gonal prism. If $H$ is the $2n+1$-gonal pyramid, $n\geq 1$, then $H\wedge K_2$ is the dual of the $2n+1$-gonal antiprism, sometimes called the pseudo-double-wheel on $4n+4$ vertices (these are the graphs of kite-faced fair dice; for $n=1$ the pseudo-double-wheel is simply the cube). On the other hand, if $H$ is the $2n+2$-gonal pyramid, $n\geq 1$, then $H\wedge K_2$ is non-planar (see Corollary \ref{cor:4pyr} to follow). How about other $3$-polytopes $H$? Are there graphs $H$ that are not $3$-polytopes such that their Kronecker product with $K_2$ is a $3$-polytope? What about Kronecker products with something other than $K_2$? These are among the questions that we have completely answered in this paper.

It is known that, if $G=H\wedge J$ is planar, and $|V(J)|\leq |V(H)|$, then $J$ is $K_2$, $P_3$, or $P_4$ \cite[Theorem 5.3 and Proposition 5.4]{farwal}. We also have some information about how the connectivity of $H$ is related to that of $H\wedge K_2$ \cite{gujvum,wawu11,wanyan}.

For Kronecker products, when we impose planarity and $3$-connectivity together, we can say rather more. We start with the following.

\begin{prop}
	\label{prop:2}
	If
	\[G=H\wedge P_3,\]
	where $P_3$ is the path on three vertices, then $G$ is not $3$-polytopal.
\end{prop}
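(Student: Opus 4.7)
The strategy is to combine a minimum-degree constraint forced by $3$-connectivity with the bipartite planar Euler bound, obtaining a direct contradiction.

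First I would record the relevant structure of $G = H \wedge P_3$. Writing $V(P_3) = \{1,2,3\}$ with $2$ the middle vertex, every edge of $P_3$ is incident to $2$, and so the set $\{(a,2) : a \in V(H)\}$ together with $\{(a,1),(a,3) : a \in V(H)\}$ forms a bipartition of $G$. Thus $G$ is bipartite. Moreover, for each $a \in V(H)$ the vertices $(a,1)$ and $(a,3)$ are twins: they share exactly the same neighbourhood $\{(b,2) : b \in N_H(a)\}$, and in particular $\deg_G((a,1)) = \deg_G((a,3)) = \deg_H(a)$.

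Next I would extract a minimum-degree condition on $H$. Since $3$-connectivity requires $\delta(G) \geq 3$, the twin degree identity above forces $\deg_H(a) \geq 3$ for every $a$, hence $\delta(H) \geq 3$ and consequently $|E(H)| \geq \tfrac{3}{2}|V(H)|$. At the same time, each edge $ab \in E(H)$ contributes exactly four edges to $G$, namely $(a,1)(b,2)$, $(a,2)(b,1)$, $(a,2)(b,3)$, $(a,3)(b,2)$, so $|E(G)| = 4|E(H)| \geq 6|V(H)| = 2|V(G)|$.

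Finally, I would invoke the bipartite Euler bound: any planar bipartite graph on at least three vertices satisfies $|E| \leq 2|V| - 4$. Applying this to $G$ (which has $|V(G)| = 3|V(H)| \geq 3$ since we already need $|V(G)| > 3$ for $3$-connectivity) gives $|E(G)| \leq 2|V(G)| - 4$, incompatible with $|E(G)| \geq 2|V(G)|$. Hence $G$ cannot be simultaneously planar and $3$-connected, i.e.\ it is not $3$-polytopal.

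There is really no hard step here; the only point to verify is that the bipartite Euler bound genuinely applies (handled by the trivial case check $|V(G)| \geq 3$), and that the four-to-one edge count is correct. The underlying structural reason for the obstruction is the abundance of twin pairs $\{(a,1),(a,3)\}$ in $H \wedge P_3$, which effectively doubles the number of ``low-side'' vertices relative to edges and makes the bipartite Euler inequality unattainable whenever $\delta(H) \geq 3$.
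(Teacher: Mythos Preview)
Your proof is correct and takes a genuinely different, much shorter route than the paper. The paper proceeds by first arguing that $H$ must contain at least two odd cycles (via an ear-decomposition argument when $H$ is $2$-connected, and a separate argument when $H$ has a cut vertex), and then, by a three-way case analysis on how two such odd cycles meet (disjoint, one common vertex, two or more common vertices), explicitly exhibits a subdivision of $K(3,3)$ inside $G=H\wedge P_3$. Your argument sidesteps all of this structure: the twin vertices $(a,1),(a,3)$ force $\delta(H)\geq 3$, and the resulting edge count $|E(G)|=4|E(H)|\geq 6|V(H)|=2|V(G)|$ immediately violates the bipartite planar Euler bound $|E|\leq 2|V|-4$. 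What the paper's approach buys is an explicit Kuratowski obstruction in every case; what yours buys is brevity and the elimination of casework.
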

Proposition \ref{prop:2} will be proven in section \ref{sec:prop}. Clearly $H\wedge P_4$ contains a copy of $H\wedge P_3$, hence if the latter is non-planar then so is the former. Therefore, it remains to consider the case $J=K_2$.

Perhaps surprisingly, Kronecker products with $K_2$ are among the least understood in the literature \cite{samp75,bdgj09}. This may be related to the fact \cite{botmet} that $H\wedge K_2$ does not necessarily contain a subgraph isomorphic to $H$ (as opposed to $H\wedge K_3$, or e.g. any Cartesian product, that contains copies of each of its factors).

It is immediate to check that, if $H\wedge K_2$ is $3$-connected, then $H$ is $2$-connected, and $\delta(H)\geq 3$.

Another necessary condition that is easy to see, any Kronecker product with $K_2$ is a bipartite graph, hence our sought $\calP=H\wedge K_2$ are $3$-polytopes where all faces are even polygons. In fact, after a few more considerations, in section \ref{sec:pre} we will show the following.
\begin{prop}
	\label{prop:conn}
	If $H$ is a graph such that $\calP=H\wedge K_2$ is a $3$-polytope, then \[\delta(\calP)=\delta(H)=3.\]
	In particular, the connectivity of $\calP$ is $3$, and the connectivity of $H$ is either $2$ or $3$.
\end{prop}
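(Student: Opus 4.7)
The plan is to combine three elementary facts: (i) the degree identity for the Kronecker product with $K_2$, (ii) Euler's formula applied to bipartite planar graphs, and (iii) the standard inequality $\kappa(G)\leq\delta(G)$.

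First I would note that for any vertex $(a,i)\in V(H\wedge K_2)$, the neighbours are precisely the pairs $(b,1-i)$ with $ab\in E(H)$, so $\deg_{\calP}(a,i)=\deg_H(a)$. Consequently $\delta(\calP)=\delta(H)$. Since $\calP$ is $3$-connected we have $\delta(\calP)\geq\kappa(\calP)\geq 3$, giving the lower bound $\delta(H)\geq 3$.

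For the matching upper bound, I would invoke the observation (already flagged in the excerpt) that $\calP=H\wedge K_2$ is bipartite. For a simple bipartite planar graph on $n\geq 3$ vertices with $m$ edges, the face-length bound (every face has length at least $4$) combined with Euler's formula yields $m\leq 2n-4$. Hence the average degree of $\calP$ is strictly less than $4$, so there exists a vertex of degree at most $3$, i.e. $\delta(\calP)\leq 3$. Combined with the previous step, $\delta(\calP)=\delta(H)=3$.

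For the connectivity statements, the inequality $\kappa(\calP)\leq\delta(\calP)=3$ together with $\kappa(\calP)\geq 3$ (from $3$-polytopality) forces $\kappa(\calP)=3$. For $H$, I would use the already-established remark in the excerpt that $3$-connectedness of $H\wedge K_2$ implies $2$-connectedness of $H$, so $\kappa(H)\geq 2$; and $\kappa(H)\leq\delta(H)=3$ from above. Therefore $\kappa(H)\in\{2,3\}$, completing the proposition.

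There is no real obstacle here; the only point that deserves a line of justification is the bipartite-planar edge bound $m\leq 2n-4$, since this is what prevents the minimum degree from exceeding $3$ and drives the entire conclusion. Everything else is assembling previously mentioned observations.
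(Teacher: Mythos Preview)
Your proof is correct and follows essentially the same approach as the paper: both use bipartiteness of $\calP$ together with Euler's formula to obtain the edge bound $m\leq 2n-4$ (the paper writes it as $2q\leq 4p-8$), forcing a vertex of degree~$3$. The paper actually extracts the slightly sharper conclusion that $\calP$ has at least eight vertices of degree~$3$ (hence $H$ has at least four), but your minimum-degree version suffices for the proposition as stated.
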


Our first main result concerns the case of $H$ planar, with vertex connectivity $2$.

\begin{thm}
\label{thm:2}
Let $H$ be a planar graph with vertex connectivity $2$. Then $\calP=H\wedge K_2$ is a $3$-polytope if and only if all odd regions of $H$ except exactly two contain a $2$-cut, and moreover if $\{a,b\}$
is a $2$-cut in $H$, then $H-a-b$
has exactly two connected components, each of them containing at least one odd region.
\end{thm}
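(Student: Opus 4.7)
The plan is to prove each direction by leveraging two background facts: $\mathcal{P}=H\wedge K_2$ is always bipartite, and a planar graph is bipartite iff all its faces are even. Proposition \ref{prop:conn} already supplies $\delta(H)=3$ and the $2$-connectivity of $H$, so I focus on the interplay between $2$-cuts, odd regions of $H$, and the face/connectivity structure of $\mathcal{P}$. A central bookkeeping identity is the natural-lift Euler count: lifting each even face of $H$ to two copies, and each odd face to a single face of doubled length, produces a $2$-complex of Euler characteristic $2(V-E+F_e)+F_o=4-F_o$, which equals $2$ (planar embedding) precisely when $F_o=2$; any excess odd regions must be absorbed by pairings across $2$-cuts.

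For the direction ($\Rightarrow$): assume $\mathcal{P}$ is a $3$-polytope. To derive the $2$-cut condition, fix a $2$-cut $\{a,b\}$ of $H$ with components $C_1,\ldots,C_k$ of $H-a-b$. The subgraph of $\mathcal{P}$ induced on $(C_i\cup\{a,b\})\times\{0,1\}$ is precisely $H[C_i\cup\{a,b\}]\wedge K_2$, which is connected iff $H[C_i\cup\{a,b\}]$ is non-bipartite, i.e. contains an odd region of $H$. A case analysis on the four removals $\{a_s,b_t\}$ with $s,t\in\{0,1\}$ shows that if either $k\geq 3$ or some $H[C_i\cup\{a,b\}]$ is bipartite, then some such removal disconnects $\mathcal{P}$, contradicting $3$-connectivity. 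Hence $k=2$ and both sides are non-bipartite, which is condition (b). For condition (a), Whitney's theorem yields a unique planar embedding of $\mathcal{P}$, whose faces can be read off from $H$: odd regions sharing a $2$-cut pair up (their lifts fuse into a single even face of $\mathcal{P}$), and an Euler count in $\mathcal{P}$ forces exactly two odd regions of $H$ to remain unpaired, giving the two exceptional regions.

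For the direction ($\Leftarrow$): assume $H$ satisfies (a) and (b). Planarity of $\mathcal{P}$ is proved by induction on the number of odd regions of $H$ that contain a $2$-cut. In the base case, no odd region contains a $2$-cut, so by (a) there are exactly two odd regions and the natural lift yields a planar embedding (Euler characteristic $2$, and the branched double cover of $S^2$ at two face points is $S^2$). In the inductive step, pick an odd region $F$ containing a $2$-cut $\{a,b\}$; by (b), $H-a-b=C_1\sqcup C_2$ with each component containing an odd region. Decompose $H$ into $H_1=H[C_1\cup\{a,b\}]$ and $H_2=H[C_2\cup\{a,b\}]$, and augment each by a virtual edge $ab$ whose parity matches the severed portion of $F$, so that each $H_i$ satisfies (a)-(b) with strictly fewer $2$-cut-containing odd regions. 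By induction each $H_i\wedge K_2$ has a planar embedding, and gluing along the $4$-vertex set $\{a_0,a_1,b_0,b_1\}$ yields a planar embedding of $\mathcal{P}$. For $3$-connectivity, any candidate $2$-cut of $\mathcal{P}$ is either a fibre pair $\{v_0,v_1\}$ (excluded since $H-v$ is connected by $2$-connectivity and non-bipartite by (a), (b), and $\delta(H)=3$) or a cross-fibre pair $\{a_s,b_t\}$ projecting to a $2$-cut $\{a,b\}$ of $H$ (excluded by (b): both sides remain non-bipartite, so each stays connected to the opposite side through $a_{1-s}$ or $b_{1-t}$).

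The main obstacle will be the inductive step of the ($\Leftarrow$) direction: specifying the virtual-edge augmentation so that both $H_1,H_2$ still satisfy conditions (a) and (b), and verifying that the glued embedding is genuinely planar rather than merely Euler-consistent. A secondary subtlety in ($\Rightarrow$) is the parity case analysis for the $2$-cut condition, where the bipartiteness of $H[C_i\cup\{a,b\}]$ interacts with the choice of removal $\{a_s,b_t\}$ through the existence of odd versus even $a$-$b$ walks on each side.
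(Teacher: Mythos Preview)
Your approach diverges substantially from the paper's, and both directions contain real gaps.

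For ($\Rightarrow$), your treatment of condition (b) matches the paper's Lemma~\ref{lem:2cut}. But your argument for condition (a) via the ``natural lift'' Euler count is not a proof. You assert that the faces of $\calP$ can be read off from $H$ as lifts of even regions and fused pairs of odd regions across $2$-cuts, but this face correspondence is precisely what would need to be established, and it is not automatic: $H$ has connectivity $2$, hence no canonical embedding, and there is no a priori reason the unique embedding of $\calP$ decomposes this way. The paper does not attempt any such Euler bookkeeping; instead it proves Lemma~\ref{lem:ll} by exhibiting, for every configuration of three odd regions none of which contains a $2$-cut (two of them disjoint), an even subdivision of one of fifteen explicit graphs whose Kronecker product with $K_2$ is non-planar. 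Your Euler sketch gives no replacement for this obstruction argument.

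For ($\Leftarrow$), the inductive splitting at a $2$-cut is problematic beyond the gluing issue you flag. After you attach a virtual edge $ab$ to $H_i$, you have altered the region structure and possibly the connectivity: $H_i+ab$ may well be $3$-connected, and even if not, the hypotheses (a)--(b) need not persist for the new graph (the ``exactly two'' odd regions without a $2$-cut is a global condition that does not localise to the pieces). The paper avoids induction entirely: it takes a minimal chain of regions $f=f_1,\dots,f_{m+1}=f'$ linking the two exceptional odd regions, deletes the edges $a_ib_i$ between consecutive $f_i$ to obtain a bipartite planar $H'$ with the $a_i,b_i$ on a single region $r$, draws two copies of $H'$ with $r$ external, and reinserts the $2m$ cross-edges. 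Planarity and $3$-connectedness of $\calP$ are then read off directly from this picture and from the hypothesis on $2$-cuts. Finally, your $3$-connectivity case analysis is incomplete: you claim every $2$-cut of $\calP$ is a fibre pair or projects to a $2$-cut of $H$, but you have not ruled out $\{a_s,b_t\}$ with $H-a-b$ connected; this requires a separate bipartiteness argument that you only gesture at.
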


Theorem \ref{thm:2} will be proven in section \ref{sec:thm2}. Note that these conditions on $H$ clearly imply $\delta(H)\geq 3$. An illustration is given in Figure \ref{fig:thm2}. As remarked above, the number of odd regions of $H$ may depend on the embedding, however, we note that the conditions of Theorem \ref{thm:2} are independent of the embedding.

\begin{figure}[h!]
	\centering
	\begin{subfigure}{0.49\textwidth}
		\centering
		\includegraphics[width=5.5cm]{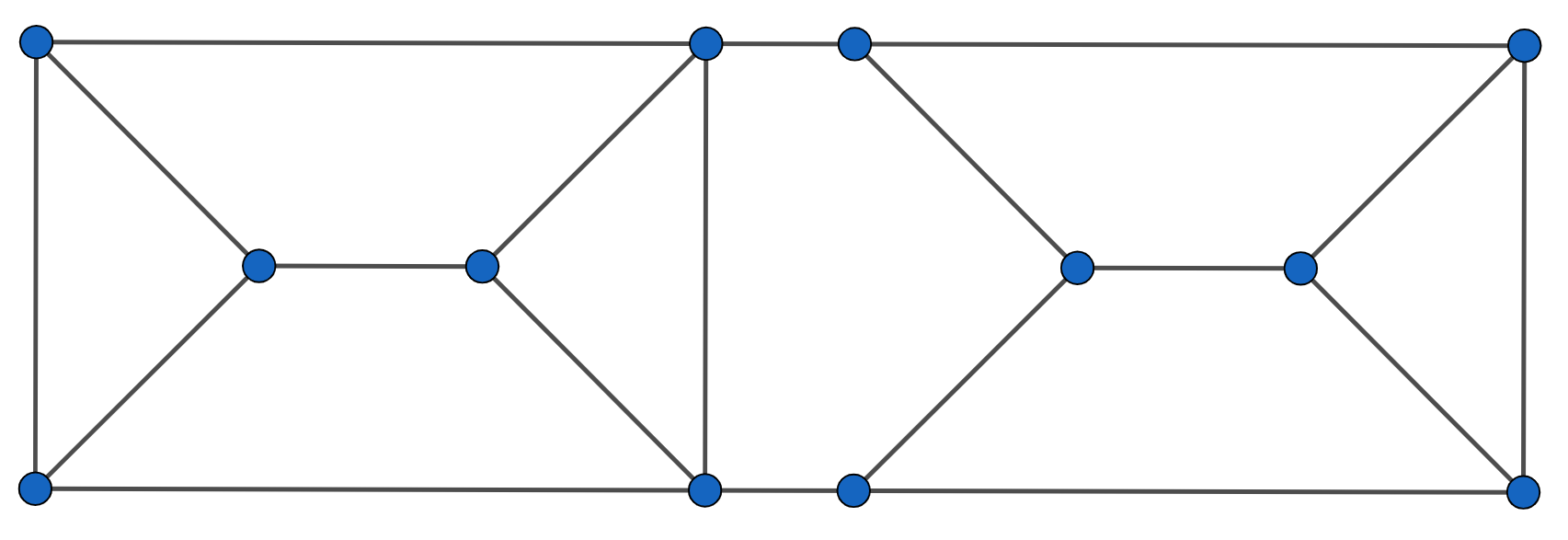}
		\caption{Example of $H$ as in Theorem \ref{thm:2}.}
		\label{fig:pl2c}
	\end{subfigure}
	\begin{subfigure}{0.49\textwidth}
		\centering
		\includegraphics[width=4cm]{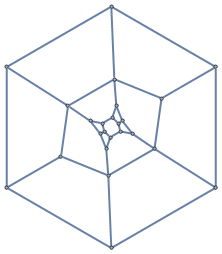}
		\caption{The $3$-polytope $H\wedge K_2$.}
		\label{fig:p0}
	\end{subfigure}
	\caption{Theorem \ref{thm:2}.}
	\label{fig:thm2}
\end{figure}

For the case where both $H$ and its Kronecker product with $K_2$ are $3$-polytopes, we can give a rather accurate description of $H$ in terms of the number of its odd faces and how they intersect.

\begin{thm}
	\label{thm:1}
	Let $H$ be a $3$-polytope and $O$ the set of its odd faces. Then $\calP=H\wedge K_2$ is a $3$-polytope if and only if one of the following holds:
	\begin{enumerate}[label=(\theenumi)]
		\item
		\label{eq:c1}
		$|O|=2$ and the two odd faces of $H$ are disjoint;
		\item
		\label{eq:c2}
		$|O|=4$, and any two elements of $O$ intersect in a vertex or edge, while any three elements of $O$ have empty intersection;
		\item
		\label{eq:c3}
		$|O|\geq 4$, all odd faces except one share a common vertex, and the remaining odd face has non-empty intersection with all other odd faces.
	\end{enumerate}
\end{thm}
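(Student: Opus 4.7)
The plan is to analyse $\calP=H\wedge K_2$ as the bipartite double cover of $H$, combining a lift-of-faces argument with Euler's formula. Fix a planar embedding of $H$: a face $F$ of boundary length $\ell$ lifts in $\calP$ to two vertex-disjoint $\ell$-cycles if $\ell$ is even, and to a single $2\ell$-cycle if $\ell$ is odd. These lifted cycles form a family $\calF$ of size $2|F(H)|-|O|$ that covers every edge of $\calP$ exactly twice. If $\calP$ is planar, Euler's formula gives $|F(\calP)|=2|F(H)|-2$, so exactly $|O|-2$ members of $\calF$ are non-facial in $\calP$, i.e.\ they strictly enclose more than one region. Connectedness of $\calP$ forces $H$ to be non-bipartite, so $|O|\geq 2$; planarity of $H$ forces $|O|$ even.

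For the forward direction I split on $|O|$. When $|O|=2$, every lifted cycle is a face of $\calP$; in particular the two lifted odd cycles are themselves faces. If the two odd faces of $H$ shared a vertex $v$, then both cover-vertices $v^+$ and $v^-$ would lie on both lifted odd faces of $\calP$; if they shared an edge $uv$, then both cover-edges $u^+v^-$ and $u^-v^+$ would lie on both. Either scenario contradicts the fact that two distinct faces of a 3-polytope meet in at most a single vertex or single edge. Hence the two odd faces of $H$ are disjoint, giving case~(1). When $|O|\geq 4$, each of the $|O|-2$ non-facial lifted odd cycles $C$ strictly encloses further cover-vertices of $\calP$, which are exactly the lifts of vertices of $H-V(F)$ placed inside the embedded image of $C$, where $F$ is the odd face lifted by $C$. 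Analysing how these interior cover-vertices attach back to $C$ via edges of $\calP$, together with the 3-connectivity and planarity of $\calP$, forces the intersection pattern of the odd faces of $H$ into one of two shapes: a 4-cycle of pairwise intersections with no triple intersection (pattern~(2)), or a pencil of odd faces sharing a common vertex together with exactly one extra odd face meeting each of them (pattern~(3)).

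For the backward direction I construct an explicit planar embedding of $\calP$ in each case and verify 3-connectivity. In (1), $\calP$ inherits a prism-like embedding with the two doubled odd cycles as antipodal faces. In (2), the four pairwise-intersecting odd faces combine into a belt of eight regions of $\calP$, with the remaining even faces filling the two polar caps. In (3), the $|O|-1$ odd faces through the common vertex $v$ lift to two wheel-like structures centred at $v^+$ and $v^-$, while the single remaining odd face lifts to a cycle enveloping the wheel. Three-connectivity of $\calP$ then follows from Proposition~\ref{prop:conn}, together with a direct verification that no 2-cut exists, which exploits 3-connectivity of $H$ and $\delta(H)\geq 3$ (implied by the face conditions).

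The main obstacle will be the forward case $|O|\geq 4$: controlling, under planarity of $\calP$, the full incidence structure of the non-facial lifted odd cycles and their interior cover-vertices. The cleanest route associates to each pair of intersecting odd faces of $H$ a specific edge-pair of $\calP$ that splits the corresponding lifted cycles into smaller regions, and then argues via a case analysis on the resulting ``intersection multigraph'' that planarity of $\calP$ admits only the two patterns (2) and~(3). Whitney's uniqueness of planar embedding for 3-polytopes will let me treat the face incidences as purely combinatorial data, turning the geometric planarity constraint into a finite combinatorial check.
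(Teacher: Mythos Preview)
Your face-lifting framework is a natural idea, but the central counting step does not do what you claim. From $|\calF|=2|F(H)|-|O|$ and $|F(\calP)|=2|F(H)|-2$ you obtain $|F(\calP)|-|\calF|=|O|-2$, which says only that $\calP$ has $|O|-2$ \emph{more faces} than there are lifted cycles; it does not say that exactly $|O|-2$ lifted cycles fail to be facial. For that conclusion you would need to know in advance that every face of the (unique, by Whitney) spherical embedding of $\calP$ either equals a member of $\calF$ or lies strictly inside one---in particular that every lift of an even face of $H$ is facial in $\calP$. This is not automatic: the family $\calF$ is precisely the face system of the natural embedding of $\calP$ on a closed surface of Euler characteristic $4-|O|$, and for $|O|\geq4$ that surface has positive genus, so there is no a priori relation between $\calF$ and the spherical faces of $\calP$. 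Even granting the counting, the heart of your forward argument for $|O|\geq4$---that planarity of $\calP$ ``forces the intersection pattern of the odd faces of $H$ into one of two shapes''---is asserted, not proved; this is exactly the hard part, and your ``intersection multigraph'' sketch gives no mechanism for ruling out the many other possible incidence patterns among four or more mutually intersecting odd faces.

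The paper proceeds quite differently and does not lift faces at all. For $|O|=2$ it uses the one-line observation that if the two odd faces share a vertex $v$ then $H-v$ is bipartite, so $\{vx,vy\}$ is a $2$-cut in $\calP$ (Lemma~\ref{lem:h-v}). For $|O|\geq4$ it first shows (Lemma~\ref{lem:ll}, by exhibiting a finite list of forbidden configurations whose Kronecker products with $K_2$ are checked to be non-planar) that two disjoint odd faces force $|O|=2$; hence any two odd faces meet. It then studies the $2$-connected subgraph $\ho$ spanned by odd-face edges, takes an even region $r$ of $\ho$, proves $r$ is adjacent to exactly three odd faces $f_1,f_2,f_3$, and case-splits on how a further odd face meets the three pairwise-intersection points of $f_1,f_2,f_3$. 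Two of the cases give Conditions~\ref{eq:c2} and~\ref{eq:c3} directly; the residual case is eliminated by locating inside $H$ an even subdivision of the square pyramid and invoking Corollary~\ref{cor:4pyr}. Your proposal contains no analogue of this forbidden-subgraph machinery, and without it the $|O|\geq4$ forward direction is not a proof but a plan.
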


Theorem \ref{thm:1} will be proven in sections \ref{sec:thm1n} and \ref{sec:thm1s}. Note that the three conditions in Theorem \ref{thm:1} are mutually exclusive. See Figure \ref{fig:pl3c} for examples of $3$-polytopes satisfying the three conditions.

\begin{figure}[h!]
	\centering
	\begin{subfigure}{0.49\textwidth}
		\centering
		\includegraphics[width=3cm]{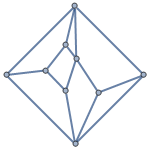}
		\includegraphics[width=3cm]{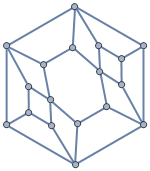}
		\caption{A $3$-polytope satisfying Condition \ref{eq:c1}, and its Kronecker product with $K_2$.}
		\label{fig:c1}
	\end{subfigure}
	\hfill
	\begin{subfigure}{0.49\textwidth}
		\centering
		\includegraphics[width=3cm]{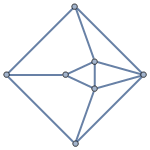}
		\includegraphics[width=3cm]{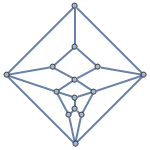}
		\caption{A $3$-polytope satisfying Condition \ref{eq:c3}, and its Kronecker product with $K_2$.}
		\label{fig:c3}
	\end{subfigure}
	\hfill
	\begin{subfigure}{0.99\textwidth}
		\centering
		\includegraphics[width=5.5cm]{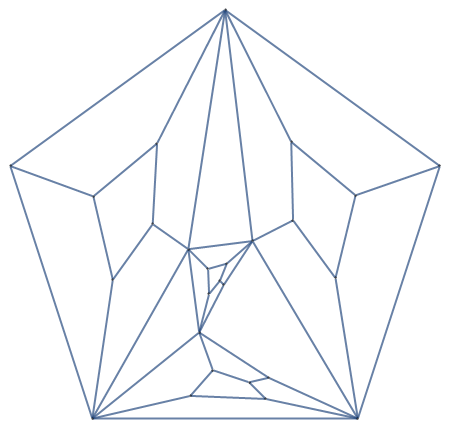}
		\hspace{0.5cm}
		\includegraphics[width=5.5cm]{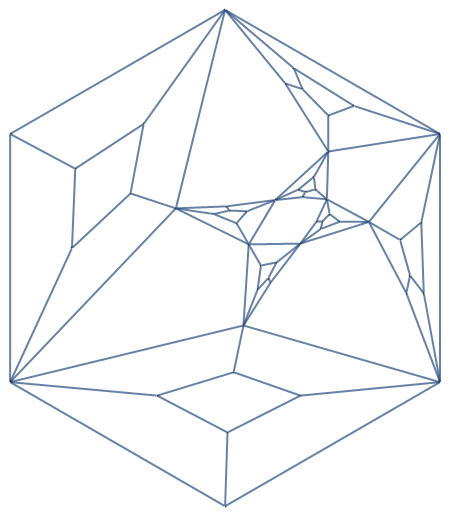}
		\caption{A $3$-polytope satisfying Condition \ref{eq:c2}, and its Kronecker product with $K_2$.}
		\label{fig:c2}
	\end{subfigure}
	\caption{Examples of $3$-polytopes $H$ as in Theorem \ref{thm:1}, and the corresponding $\calP=H\wedge K_2$.}
	\label{fig:pl3c}
\end{figure}

Together, Theorems \ref{thm:2} and \ref{thm:1} completely classify the planar graphs $H$ such that $H\wedge K_2$ is a $3$-polytope.

Perhaps surprisingly, it is possible for $H$ to be non-planar and $H\wedge K_2$ to be planar \cite{botmet,handpr}. Further, it is possible for $H$ to be non-planar and $H\wedge K_2$ to be $3$-polytopal. For instance, any $4n$-gonal prism, $n\geq 2$, may be written as $H_{n}\wedge K_2$, where $H_n$ is the non-planar graph sketched in Figure \ref{fig:hn}.

\begin{figure}[h!]
	\centering
	\includegraphics[width=5.5cm]{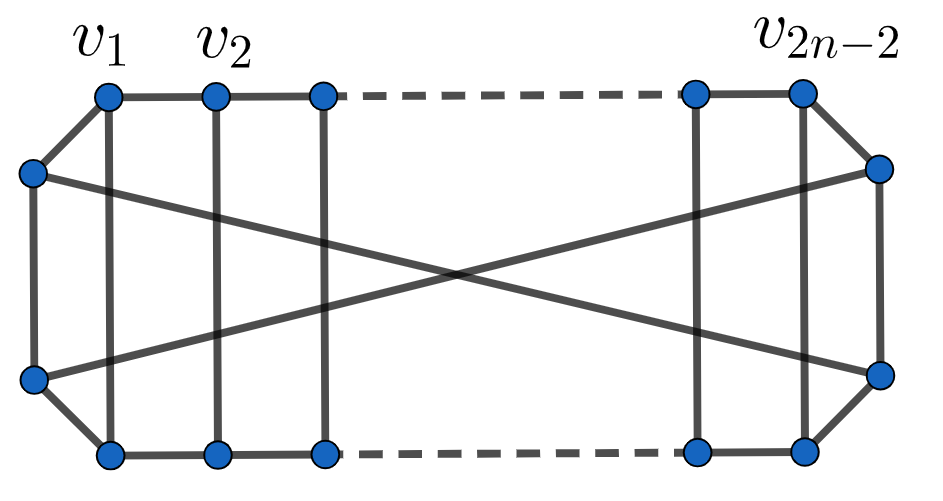}
	\caption{The non-planar graph $H_n$, $n\geq 2$.}
	\label{fig:hn}
\end{figure}

To treat the non-planar case, we introduce the following.
\begin{defin}
	We say that a graph $H$ is semi-hyper-$k$-connected if its vertex connectivity is $k$, and moreover every $k$-cut results in exactly $2$ connected components.
\end{defin}

We record that a planar graph $H$ is semi-hyper-$2$-connected if and only if, in any planar embedding of $H$, there exists a region containing every $2$-cut. For instance, the graph in Figure \ref{fig:pl2c} is planar and semi-hyper-$2$-connected. We have the following characterisation for non-planar graphs with $3$-polytopal Kronecker product with $K_2$.

\begin{thm}
	\label{thm:3}
	Suppose that $H$ is a $2$-connected, non-planar graph, with $\delta(H)\geq 3$. Then $\calP=H\wedge K_2$ is a $3$-polytope if and only if the following all hold. There exists a planar, bipartite spanning subgraph $H'$ of $H$, either $3$-connected of semi-hyper-$2$-connected, such that 
	\[H=H'+a_1b_1+a_2b_2+\dots+a_mb_m, \qquad a_1,b_1,a_m,b_m \text{ distinct,}\]
	where the graphs $H'+a_ib_i$, $1\leq i\leq m$ are not bipartite; there exists a planar embedding of $H'$ such that the vertices
	\begin{equation}
	\label{eq:vertabm}
	a_1,a_2,\dots,a_m,b_1,b_2,\dots,b_m
	\end{equation}
	lie in this order on a region $r$; such $r$ contains every $2$-vertex-cut in $H'$, and none of these $2$-vertex-cuts lie on $r$ between $b_m$ and $a_1$, or $a_m$ and $b_1$ extrema included.
\end{thm}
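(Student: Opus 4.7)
The plan is to exploit the bipartite double cover structure of $\calP = H\wedge K_2$: the projection $\pi\colon (v,i)\mapsto v$ is a $2$-to-$1$ graph cover with the fixed-point-free involution $\sigma(v,i)=(v,1-i)$ as deck transformation. By Whitney's theorem the planar embedding of $\calP$ is unique on the sphere $S^2$, so $\sigma$ extends to a fixed-point-free self-homeomorphism of $S^2$, which must be orientation-reversing; hence $H$ inherits a cellular embedding in the real projective plane $\mathbb{RP}^2$, and the covering $\calP\to H$ coincides with the orientation double cover. Under this identification, odd cycles of $H$ are exactly the non-null-homotopic cycles. This picture guides both directions, while the conditions themselves are verified combinatorially.

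\textbf{Sufficiency.} Assume the structural conditions. Since $H'+a_ib_i$ is non-bipartite, both endpoints of each $a_ib_i$ lie in the same class of the bipartition of $H'$; consequently $H'\wedge K_2$ decomposes into two disjoint copies $C_0,C_1$ of $H'$, and each $a_ib_i$ lifts to a pair of cross-edges joining $C_0$ to $C_1$. I would embed $C_0$ and $C_1$ on opposite hemispheres of $S^2$, each having the prescribed region $r$ bounded by the equator, and realise the $2m$ cross-edges as arcs in the equatorial annulus; the cyclic order $a_1,\dots,a_m,b_1,\dots,b_m$ of their endpoints on $r$ makes these arcs realisable pairwise non-crossingly by a standard nested-chord argument on the annulus. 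For $3$-connectivity, any hypothetical $2$-cut $\{x,y\}$ of $\calP$ would project via $\pi$ to a cut of size at most $2$ in $H$; I would rule this out in two sub-cases according to the hypothesis on $H'$. If $H'$ is $3$-connected, only the cross-edges can enable a separation, and their prescribed placement off the forbidden arcs of $r$ prevents it. If $H'$ is semi-hyper-$2$-connected, the hypothesis that every $2$-cut of $H'$ lies on $r$ but outside the forbidden arcs ensures that the two components cut off by any $2$-cut of $H'$ are bridged in $\calP$ by lifts of some $a_ib_i$.

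\textbf{Necessity.} Assume $\calP$ is $3$-polytopal. Working in the unique embedding of $\calP$ in $S^2$ with its $\sigma$-action, I would choose a $\sigma$-invariant simple closed curve $\gamma$ on $S^2$, transverse to $\calP$ and avoiding its vertices, descending to a non-null-homotopic simple closed curve in $\mathbb{RP}^2$; equivalently, $\gamma$ is chosen so that $S^2\setminus\gamma$ has two components interchanged by $\sigma$. The image of $\gamma$ meets the embedded $H$ transversely in points lying on a list of edges $S=\{a_1b_1,\dots,a_mb_m\}$, ordered by their appearance along $\gamma$; set $H'=H-S$. Then $H'$ embeds in $\mathbb{RP}^2\setminus\gamma$, which is a disk, so $H'$ is planar; every odd cycle of $H$ is non-null-homotopic, hence crosses $\gamma$, so $H'$ is bipartite; and by construction each $H'+a_ib_i$ is non-bipartite. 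Cutting along $\gamma$ unfolds its double trace into the boundary of the disk, on which the vertices $a_1,\dots,a_m$ appear along one side and $b_1,\dots,b_m$ along the other, yielding precisely the cyclic order required around the region $r$ of $H'$ that contains the disk boundary.

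\textbf{Main obstacle.} The delicate step is arranging $\gamma$ so that (i) $H'$ is either $3$-connected or semi-hyper-$2$-connected, and (ii) every $2$-cut of $H'$ lies on $r$ and avoids both closed arcs from $b_m$ to $a_1$ and from $a_m$ to $b_1$. I plan to address this by a rigidity argument: a hypothetical $2$-cut $\{x,y\}$ of $H'$ lying off $r$, or contained in one of the forbidden arcs, would fail to be bridged by any pair of cross-edges of $\calP$ and would therefore lift to a $2$-cut of $\calP$ itself, contradicting its $3$-connectivity. This forces any admissible choice of $\gamma$ to meet the conditions, possibly after isotoping $\gamma$ to slide its crossings with $H$ into the prescribed cyclic order. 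A secondary technicality is that $H$ need not embed uniquely in $\mathbb{RP}^2$, but as in Theorem~\ref{thm:2} the extracted conditions will be verifiable in any admissible planar embedding of $H'$.
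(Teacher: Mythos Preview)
Your approach is genuinely different from the paper's, and the topological picture is sound: for non-planar $H$ the deck involution $\sigma$ on the $3$-polytope $\calP$ extends by Whitney to a cellular involution of $S^2$ which, having no fixed vertices, edges, or faces, must be conjugate to the antipodal map, so $H$ inherits an embedding in $\mathbb{RP}^2$. Cutting along a non-contractible transverse curve $\bar\gamma$ then yields planarity and bipartiteness of $H'$, non-bipartiteness of each $H'+a_ib_i$, and the cyclic order $a_1,\dots,a_m,b_1,\dots,b_m$ on the outer region $r$ essentially for free, via the $2$-to-$1$ unwrapping of $\bar\gamma$ to the boundary of the disk. The paper instead constructs $H'$ combinatorially, deleting one edge at a time from a current odd cycle while preserving $2$-connectivity, and then reads off $r$ and the cyclic order from the planar drawing of the two copies of $H'$ inside $\calP$, ruling out bad interleavings by an explicit $K(3,3)$ subdivision and invoking Lemma~\ref{lem:2cut} for the location of the $2$-cuts.

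There is, however, a real gap at precisely the point you call the main obstacle, and your proposed resolution does not close it. Your rigidity argument shows that any $2$-cut of $H'$ lying off $r$, or inside a forbidden arc, would lift to a $2$-cut of $\calP$; but it does not show that $H'$ is $2$-connected to begin with. For an arbitrary transverse non-contractible $\bar\gamma$ the graph $H'=H-S$ can be disconnected or have cut vertices --- nothing prevents $\bar\gamma$ from crossing, say, all three edges at some degree-$3$ vertex of $H$, isolating it --- and then neither ``$3$-connected'' nor ``semi-hyper-$2$-connected'' applies, and the boundary of $r$ is not even a single cycle, so the cyclic-order statement loses its meaning. The isotopy suggestion is too vague: sliding $\bar\gamma$ across a vertex does change $S$, but you give no mechanism or decreasing invariant that drives such moves toward higher connectivity of $H'$. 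This is exactly where the paper's pedestrian construction pays off: by maintaining $2$-connectivity at every deletion step, it never has to recover it afterward. To repair your argument you would need either a genuine minimality argument (e.g.\ take $\bar\gamma$ crossing the fewest edges and prove that this forces $H'$ to be $2$-connected) or to import the paper's edge-by-edge deletion to build $H'$ and only then reinterpret the outcome via a curve $\bar\gamma$.

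A minor point on sufficiency: the assertion that a $2$-cut of $\calP$ ``projects via $\pi$ to a cut of size at most $2$ in $H$'' is not literally correct. What one actually shows is the contrapositive through $H'$: if $\{a,b\}$ is not a $2$-cut of $H'$, then no pair $\{(a,\epsilon),(b,\epsilon')\}$ separates $\calP$, using the distinctness of $a_1,b_1,a_m,b_m$ to guarantee a surviving cross-edge between the two copies.
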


Theorem \ref{thm:3} will be proven in section \ref{sec:thm3}. For instance in $H_n$ of Figure \ref{fig:hn}, a feasible $H'$ is obtained by deleting the two diagonal edges. Another example is given in Figure \ref{fig:npl}. To clarify the last condition in Theorem \ref{thm:3}, if \[v_1,v_2,\dots,v_{V},a_1,a_2,\dots,a_m,w_1,w_2,\dots,w_{W},b_1,b_2,\dots,b_m\]
lie in this order on the contour of $r$, then no $2$-cut set of $H'$ is a subset of
\[\text{either } \{b_m,v_1,v_2,\dots,v_{V},a_1\} \quad \text{ or }\{a_m,w_1,w_2,\dots,w_{W},b_1\}.\]

\begin{figure}[h!]
	\centering
	\begin{subfigure}{0.49\textwidth}
		\centering
		\includegraphics[width=4cm]{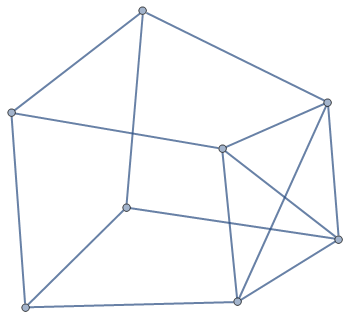}
		\caption{Example of $H$ as in Theorem \ref{thm:3}. As subgraph $H'$ we can simply take the cube.}
		\label{fig:np}
	\end{subfigure}
	\begin{subfigure}{0.49\textwidth}
		\centering
		\includegraphics[width=4cm]{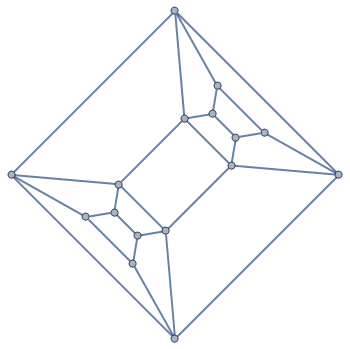}
		\caption{The $3$-polytope $H\wedge K_2$.}
		\label{fig:nptk2}
	\end{subfigure}
	\caption{Theorem \ref{thm:3}.}
	\label{fig:npl}
\end{figure}

Having covered all cases, we conclude that the graph $\calP=H\wedge J$ is a $3$-polytope if and only if (up to reordering) $J=K_2$ and $H$ is as in Theorems \ref{thm:2}, \ref{thm:1}, or \ref{thm:3}.

In fact, Theorem \ref{thm:3} is a special case of the following.
\begin{thm}
	\label{thm:4}
The graph $\calP=H\wedge J$, where $|V(J)|\leq|V(H)|$, is a $3$-polytope if and only if the following are all satisfied. We have $J=K_2$ and $\delta(H)\geq 3$; $H$ contains a planar, bipartite spanning subgraph $H'$, either $3$-connected of semi-hyper-$2$-connected, such that
\[H=H'+a_1b_1+a_2b_2+\dots+a_mb_m, \qquad a_1,b_1,a_m,b_m \text{ distinct,}\]
where the graphs $H'+a_ib_i$, $1\leq i\leq m$ are not bipartite; there exists a planar embedding of $H'$ such that the endpoints of $a_1b_1$, $a_2b_2$, \dots, $a_mb_m$ all lie on a region $r$, either in the order
\begin{equation}
	\label{eq:ord1}
a_1,a_2,\dots,a_m,b_m,b_{m-1},\dots,b_1,
\end{equation}
or in the order
\begin{equation}
	\label{eq:ord2}
	a_1,a_2,\dots,a_m,b_1,b_2,\dots,b_m;
\end{equation}
such $r$ contains every $2$-vertex-cut in $H'$, and none of these $2$-vertex-cuts lie on $r$ between $b_1$ and $a_1$, or $a_m$ and $b_m$ in \eqref{eq:ord1}, or between $b_m$ and $a_1$, or $a_m$ and $b_1$ in \eqref{eq:ord2}, extrema included in each case.
\end{thm}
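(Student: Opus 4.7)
My strategy reduces Theorem \ref{thm:4} to the previously established Theorems \ref{thm:2}, \ref{thm:1}, \ref{thm:3} and Propositions \ref{prop:2}, \ref{prop:conn}. First, by \cite[Theorem 5.3 and Proposition 5.4]{farwal}, planarity of $H \wedge J$ with $|V(J)| \leq |V(H)|$ forces $J \in \{K_2, P_3, P_4\}$; Proposition \ref{prop:2} eliminates $J = P_3$, and since $H \wedge P_3$ is a subgraph of $H \wedge P_4$, also $J \neq P_4$. Once $J = K_2$, Proposition \ref{prop:conn} gives $\delta(H) = 3$. Thus for the rest of the proof I may assume $J = K_2$ and $\delta(H) \geq 3$.

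For the backward direction, when the chord endpoints appear in order \eqref{eq:ord2} the conclusion is exactly Theorem \ref{thm:3}. When they appear in order \eqref{eq:ord1} the chords $a_ib_i$ are nested and can be drawn as pairwise non-crossing arcs inside $r$; this exhibits a planar embedding of $H$ in which $r$ is subdivided into $m+1$ subregions. The bipartiteness of $H'$ together with the non-bipartiteness of each $H' + a_ib_i$ pins down the parity of each subregion, and a direct count shows that the odd faces of $H$ and their intersection pattern match either the hypothesis of Theorem \ref{thm:2} or one of Conditions \ref{eq:c1}--\ref{eq:c3} of Theorem \ref{thm:1}. Those theorems then yield that $\calP = H \wedge K_2$ is 3-polytopal.

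For the forward direction, if $H$ is non-planar then Theorem \ref{thm:3} supplies the required structure directly, with chord order \eqref{eq:ord2}. If $H$ is planar, I apply Theorem \ref{thm:2} (when $H$ has connectivity $2$) or Theorem \ref{thm:1} (when $H$ is $3$-polytopal) to obtain the precise structure of the odd regions of $H$, pair them according to the prescribed intersection pattern, and for each pair select a short sequence of edges of $H$ to delete so that the resulting spanning subgraph $H'$ is planar and bipartite, with all deleted edges lying on a single merged region $r$ of $H'$ in the nested order \eqref{eq:ord1}.

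\textbf{Main obstacle.} The delicate step is the construction of $H'$ in the forward planar case. For Condition \ref{eq:c1} of Theorem \ref{thm:1}, a single edge from each of the two disjoint odd faces suffices. For Condition \ref{eq:c2} one routes deletions through the pairwise intersections of the four odd faces. For Condition \ref{eq:c3}, deletions near the common vertex $v$ pair up the $\geq 4$ co-vertex odd faces, leaving one further edge to kill the distinguished face $f_0$. The case of Theorem \ref{thm:2} is analogous, using the $2$-cuts to locate the removals. In every sub-case I must verify that the deleted edges, regarded as chords of $H'$, land on a single region in nested cyclic order \eqref{eq:ord1}, and this planar-embedding bookkeeping is where the bulk of the argument lies.
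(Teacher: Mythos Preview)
Your approach is genuinely different from the paper's, which proves Theorem~\ref{thm:4} \emph{directly}: it constructs $H'$ by iteratively deleting an edge from an odd cycle while preserving $2$-connectivity, then argues from the planarity and $3$-connectivity of $H\wedge K_2$ that the deleted edges must sit on a single region of $H'$ in one of the two admissible orders (ruling out a ``bad'' ordering via an explicit $K(3,3)$-subdivision). No appeal to Theorems~\ref{thm:2}, \ref{thm:1}, or \ref{thm:3} is made; in fact the paper derives Theorem~\ref{thm:3} \emph{from} Theorem~\ref{thm:4}.

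That last point is the first real problem with your plan: you invoke Theorem~\ref{thm:3} as ``previously established,'' but in this paper it is not---section~\ref{sec:thm3} opens with ``It suffices to prove Theorem~\ref{thm:4}, since Theorem~\ref{thm:3} is a special case of it.'' So using Theorem~\ref{thm:3} for both directions when $H$ is non-planar (or when the chord order is \eqref{eq:ord2}) is circular unless you supply an independent proof of Theorem~\ref{thm:3}, which you have not.

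The second problem is your forward construction of $H'$ under Condition~\ref{eq:c1}. Deleting ``a single edge from each of the two disjoint odd faces'' does \emph{not} make $H'$ bipartite: removing an edge $e$ from an odd face $f$ merges $f$ with the even face across $e$, and the merged region has length $|f|+|\text{even}|-2$, which is still odd. Even if it were bipartite, the two deleted edges would sit on two distant regions of $H'$, not a common one. What actually works (and what the paper does in Lemma~\ref{lem:2odre}) is to delete a \emph{chain} of edges $a_1b_1,\dots,a_mb_m$ along a minimal sequence of pairwise adjacent faces from $f$ to $f'$; this merges everything into a single region $r$ carrying the endpoints in the nested order \eqref{eq:ord1}. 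Your sketches for Conditions~\ref{eq:c2}, \ref{eq:c3} and for Theorem~\ref{thm:2} are similarly underspecified and would need the same chain idea to land all chord endpoints on one region.

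In short: the reduction strategy is not unreasonable in spirit, but as written it is circular in the non-planar case and incorrect in the simplest planar case. The paper's direct argument sidesteps all of this case analysis.
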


Theorem \ref{thm:4} will be proven in section \ref{sec:thm3}.

The disadvantage of this compact formulation is of course, that in the planar case information on the number and adjacencies of the odd regions/faces of $H$ is not explicit (recall Theorems \ref{thm:2} and \ref{thm:1}).

On the other hand, the compact formulation of Theorem \ref{thm:4} has the further advantage of being constructive, in the following sense.

\begin{alg}
	\label{alg:1}
Every graph $H$ such that $H\wedge K_2$ is a $3$-polytope may be constructed in the following way. We start with any planar, bipartite $H'$, that is either $3$-connected of semi-hyper-$2$-connected. We then add edges to $H'$ respecting the conditions in Theorem \ref{thm:4}, while ensuring that $\delta(H)\geq 3$.
\end{alg}
In Algorithm \ref{alg:1}, in case $H'$ is semi-hyper-$2$-connected, the region $r$ of $H'$ containing the endpoints of the edges to add must be the one containing all $2$-cuts, whereas if $H'$ is $3$-connected, $r$ may be chosen arbitrarily.

\subsection{Plan of the paper}
In section \ref{sec:prop}, we will prove Proposition \ref{prop:2}, implying that if a Kronecker product is a $3$-polytope, then one of the factors is $K_2$. The other factor $H$ is thus $2$-connected. For $H$ planar of connectivity $2$, we will prove Theorem \ref{thm:2} in section \ref{sec:thm2}. For $H$ planar and $3$-connected, we will prove Theorem \ref{thm:1} in sections \ref{sec:thm1n} and \ref{sec:thm1s}. For $H$ non-planar, we will prove Theorem \ref{thm:4}, implying Theorem \ref{thm:3}, in section \ref{sec:thm3}. This will conclude the proofs of the main results.

To build $H\wedge K_2$, we will be using the subgraph of $H$ mentioned in Theorems \ref{thm:3} and \ref{thm:4}, or a similar construction. In general, one first deletes from $H$ vertices and/or edges, to construct a bipartite graph $H'$ (cf. the odd cycle packing problem \cite{gkpt09,krodd1}). Next, one sketches two copies of $H'$ in the plane, yielding $H'\wedge K_2$ (see Lemma \ref{lem:h-v} to follow). Finally, for each deleted edge $ab\in E(H)$, one adds to $H'\wedge K_2$ the edges $(a,x)(b,y)$ and $(a,y)(b,x)$, where $x,y$ are the vertices of $K_2$. This produces $H\wedge K_2$. Our aim is to choose the vertices and edges to delete in such a way to uncover necessary and/or sufficient conditions on $H$ for $H\wedge K_2$ to be a $3$-polytope.

To conclude this introduction, we will now briefly inspect the other main graph products, such as Cartesian (also called `box') $\square$ and strong $\boxtimes$ products  \cite{handpr}. Our question here is similar: which Cartesian and strong graph products are $3$-polytopal? Much more is known in the literature about the planarity of these products, hence this question is readily answered, as stated in the following section, and proven in Appendix \ref{sec:appa}.

\subsection{Other graph products}
\label{sec:cs}
Three of the most commonly studied graph products are the Kronecker $G_1=H\wedge J$, Cartesian $G_2=H\square J$, and strong $G_3=H\boxtimes J$. Their definitions are
\[V(G_1)=V(G_2)=V(G_3)=V(H)\times V(J),\]
and
\begin{align*}
E(G_1)&=\{(a,x)(b,y) : ab\in E(H) \text{ and } xy\in E(J)\},
\\E(G_2)&=\{(a,x)(b,y) : (a=b \text{ and } xy\in E(J)) \text{ or } (ab\in E(H) \text{ and } x=y)\},
\\E(G_3)&=E(G_1)\cup E(G_2).
\end{align*}
 Sometimes in the literature they are denoted by $\times$, $\square$, and $\boxtimes$, with the mnemonic of what the respective products of two copies of $K_2$ could look like when sketched in the plane. We have elected to denote the Kronecker product by $\wedge$, reserving the symbol $\times$ for the Cartesian product of sets. This also avoids another possible confusion, since some texts use $\times$ for the Cartesian product of graphs.
\begin{prop}
	\label{prop:cart}
	The $3$-polytopes that are the Cartesian product of two graphs are exactly the products of $K_2$ and an outerplanar, $2$-connected graph, and the products of a path and a polygon.
\end{prop}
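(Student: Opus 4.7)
The plan is to prove both directions separately.

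For sufficiency, I verify directly that each of the two families gives a $3$-polytope. For $K_2 \square H$ with $H$ outerplanar and $2$-connected: an outerplanar embedding of $H$ inside a disk, a second copy outside the disk, and radial segments joining corresponding vertices across the boundary yield a planar drawing. The minimum degree is $\delta(H) + 1 = 3$, and a case analysis on any purported $2$-vertex cut across the two copies (using $2$-connectivity of $H$ together with the matching between the copies) establishes $3$-connectivity. For a product of a path and a polygon: the natural cylindrical embedding is planar; both end polygons contain vertices of degree $3$; $3$-connectivity follows by a similar removal argument.

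For necessity, suppose $\calP = G \square H$ is a $3$-polytope with $|V(G)|, |V(H)| \geq 2$. Fixing one coordinate embeds $G$ and $H$ as induced subgraphs of $\calP$, so both are planar, and $\delta(\calP) = \delta(G) + \delta(H) \geq 3$. I split into cases by the minimum degrees. If $G = K_2$: the prism $K_2 \square H$ is planar iff $H$ is outerplanar (classical prism characterisation, excluding prisms over $K_4$ and over $K_{2,3}$), and $3$-connected iff $H$ is $2$-connected, since $\{(a,1),(a,2)\}$ is a $2$-cut of the prism exactly when $a$ is a cut vertex of $H$. This yields the first listed family. If $\delta(G), \delta(H) \geq 2$: both factors contain a cycle, and $C_k \square C_\ell$ is non-planar for all $k, \ell \geq 3$ (via an explicit $K_{3,3}$ subdivision), so at least one factor must be a path; the grid $P_n \square P_m$ has degree-$2$ corners and fails $3$-connectivity, so the other factor must actually be a polygon, yielding the second family. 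The remaining case, where some factor has minimum degree $1$ but is not $K_2$, is excluded by exhibiting a $K_{3,3}$ subdivision combining a pendant-plus-branch on one side with a cycle or two-edge path on the other.

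The most delicate step is the planarity analysis in the middle case: the non-planarity of $C_k \square C_\ell$ (quick) and the pendant-vertex subcase (a short explicit subdivision argument). Once this classification is in hand, matching it against the $3$-connectivity condition is routine.
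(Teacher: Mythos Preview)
Your case split is inconsistent and hides a real gap. In the case $\delta(G),\delta(H)\geq 2$ you conclude ``at least one factor must be a path'', but a path has minimum degree $1$; the correct deduction in that case is simply that both factors contain a cycle, so the product contains a copy of some $C_k\square C_\ell$ and is non-planar --- this case is empty. The path-times-polygon family actually lives in your \emph{remaining} case (one factor has $\delta=1$ but is not $K_2$): for $n\geq 3$ the stacked prism $P_n\square C_m$ is a $3$-polytope, so that case cannot be excluded wholesale by a $K_{3,3}$ subdivision as you claim.

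Even after reshuffling the cases, two arguments are genuinely missing. First, once one factor is acyclic (a tree), you still need to rule out branching: the paper does this via the non-planarity of $K_{1,3}\square K_3$, forcing the tree factor to be a path. Second, once one factor is a path $P_n$ with $n\geq 3$, you must show the other factor is \emph{exactly} a polygon, not merely a graph with $\delta\geq 2$; ruling out $P_n\square P_m$ by its degree-$2$ corners only eliminates trees on the other side, not cycles with extra edges. The paper handles this by observing that any connected graph with no degree-$1$ vertex that is not a single cycle contains a subdivision of the diamond, of the bowtie, or of two cycles joined by a path, and checking that the product of each of these with $P_3$ is already non-planar --- hence forcing $A=K_2$ in that situation. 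Your sufficiency direction and your treatment of the $K_2\square H$ case are fine.
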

The proof of Proposition \ref{prop:cart} follows readily from \cite{behmah}, and may be found in Appendix \ref{sec:appa}. Proposition \ref{prop:cart} identifies two families of $3$-polytopes that are Cartesian products of graphs, namely the products of $K_2$ and an outerplanar, $2$-connected graph (e.g. Figure \ref{fig:cart1}), and the products of a path and a polygon (e.g. Figure \ref{fig:cart2}). The intersection of these two families is the set of prisms. By the way, an outerplanar graph is $2$-connected if and only if it is Hamiltonian. For the relation between Cartesian products and $d$-polytopes, we refer the interested reader to \cite{pfpisa}.

\begin{figure}[h!]
	\centering
	\begin{subfigure}{0.29\textwidth}
		\centering
		\includegraphics[width=4cm]{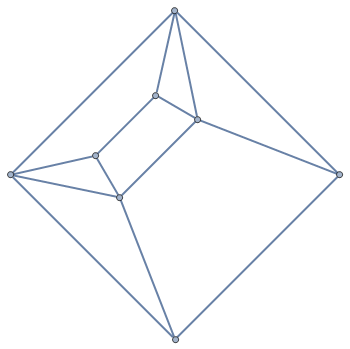}
		\caption{Cartesian product of the diamond graph and $K_2$.}
		\label{fig:cart1}
	\end{subfigure}
\hspace{0.75cm}
	\begin{subfigure}{0.27\textwidth}
		\centering
		\includegraphics[width=4cm]{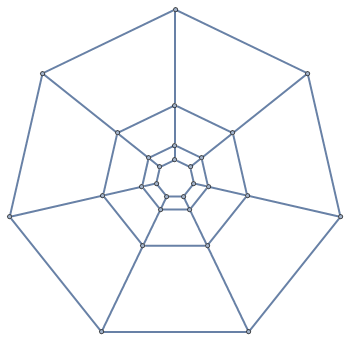}
		\caption{Cartesian product of $P_4$ and the heptagon.}
		\label{fig:cart2}
	\end{subfigure}
\hspace{0.75cm}
	\begin{subfigure}{0.27\textwidth}
		\centering
		\includegraphics[width=4cm]{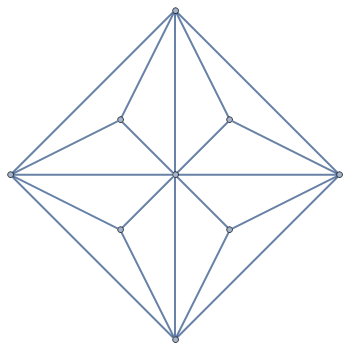}
		\caption{The $3$-polytope $P_3\boxtimes P_3$.}
		\label{fig:str}
	\end{subfigure}
	\caption{Propositions \ref{prop:cart} and \ref{prop:strong}.}
	\label{fig:cartstr}
\end{figure}

\begin{prop}
\label{prop:strong}
The graph $\calP=H\boxtimes J$ is a $3$-polytope if and only if $\calP=K_4=K_2\boxtimes K_2$ or $\calP=P_3\boxtimes P_3$, where $P_3$ is the simple path on three vertices.
\end{prop}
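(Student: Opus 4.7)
The proof naturally splits into sufficiency and necessity.

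For sufficiency ($\Leftarrow$), I verify both candidates directly. $K_2\boxtimes K_2 = K_4$ is the tetrahedron. For $P_3\boxtimes P_3$ I would exhibit an explicit planar embedding: the unique degree-$8$ vertex $(2,2)$ at the centre, surrounded by the octagonal cycle on the eight boundary vertices of the $3\times 3$ grid, with the four remaining Kronecker chords (forming a quadrilateral on the four edge-midpoint vertices) drawn in the unbounded face. $3$-connectivity then follows from the dihedral symmetry: only a handful of pairs of vertices need to be checked up to symmetry, and none of them disconnect the graph.

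For necessity ($\Rightarrow$), assume $\calP = H\boxtimes J$ is a $3$-polytope with $|V(J)|\leq|V(H)|$. The key initial observation is that $H\wedge J$ is a spanning subgraph of $\calP$ and therefore planar, so by \cite[Theorem 5.3 and Proposition 5.4]{farwal} one has $J\in\{K_2, P_3, P_4\}$. Setting $n=|V(H)|$, $m=|E(H)|$, $n'=|V(J)|$, $m'=|E(J)|$, I then combine the edge-count identity
\[
|E(H\boxtimes J)| = n' m + n m' + 2 m m'
\]
with the planarity bound $|E|\leq 3nn'-6$ and $m\geq n-1$ (connectivity of $H$) to constrain $H$ case by case.

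For $J=P_4$ the inequality becomes $10m\leq 9n-6$, forcing $n=4$ with $H$ a tree on four vertices ($P_4$ or $K_{1,3}$); both $P_4\boxtimes P_4$ and $K_{1,3}\boxtimes P_4$ are then ruled out by exhibiting explicit $K_{3,3}$- or $K_5$-minors. For $J=P_3$ one obtains $7m\leq 7n-6$, so $m=n-1$ and $H$ is a tree; $H=P_3$ gives the listed example, while for any tree on $\geq 4$ vertices $H\boxtimes P_3$ contains either $P_4\boxtimes P_3$ or $K_{1,3}\boxtimes P_3$ as a subgraph, which I would prove non-planar via explicit minors. For $J=K_2$ I first show that $H$ must be $2$-connected: if $v$ were a cut vertex of $H$ separating $H-v$ into components $A,B$, then $\{(v,x),(v,y)\}$ would be a $2$-cut of $\calP$ (every path crossing it must use a copy of $v$), contradicting $3$-connectivity. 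For $n\geq 3$ this yields $m\geq n$, which combined with $4m\leq 5n-6$ gives $n\geq 6$ and $m\leq n$, forcing $H=C_n$ for some $n\geq 6$; an explicit $K_{3,3}$-minor in $C_n\boxtimes K_2$ (take three consecutive arcs on each of the two cycles as branch sets, the cross-adjacencies being realised through the Kronecker skew-edges) shows it is non-planar. The only remaining case is $H=K_2$, giving $K_4$.

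The main obstacle will be verifying non-planarity of the borderline small products ($P_4\boxtimes P_3$, $K_{1,3}\boxtimes P_3$, $P_4\boxtimes P_4$, $K_{1,3}\boxtimes P_4$, and $C_n\boxtimes K_2$ for $n\geq 6$), where the edge-count bound is tight and does not by itself exclude planarity; in each case one must exhibit an explicit $K_{3,3}$- or $K_5$-minor, which demands a careful choice of branch sets but is otherwise routine case analysis.
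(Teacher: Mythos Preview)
Your route is genuinely different from the paper's. The paper simply invokes the Jha--Slutzki classification \cite{jhaslu}: for connected $H,J$, the strong product $H\boxtimes J$ is planar if and only if $H=J=P_3$, or $J=K_2$ and $H$ is a tree. From there one only needs your cut-vertex observation (a cut vertex $v$ of $H$ gives the $2$-cut $\{(v,x),(v,y)\}$ in $H\boxtimes K_2$) to see that among trees only $K_2$ survives. Your approach via the Kronecker subgraph and \cite{farwal} plus edge-counting is more hands-on and essentially re-derives a portion of Jha--Slutzki; it trades one citation for several explicit minor constructions.

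There is, however, a genuine arithmetic slip in your $J=K_2$ case. From $m\geq n$ and $4m\leq 5n-6$ you correctly obtain $n\geq 6$, but the claimed bound $m\leq n$ does \emph{not} follow: for $n=10$ the inequalities allow $m=11$, and e.g.\ $C_{10}$ together with one chord is a $2$-connected non-cycle meeting all your constraints. So the reduction to $H=C_n$ breaks down for $n\geq 10$. The repair is easy and actually streamlines the argument: once you know $H$ is $2$-connected on at least three vertices, it contains some cycle $C_k$, and $C_k\boxtimes K_2$ has a $K_5$-minor for every $k\geq 3$ --- take branch sets $\{(1,x)\}$, $\{(1,y)\}$, $\{(2,x)\}$, $\{(2,y)\}$, and $\{3,\dots,k\}\times\{x,y\}$; all ten required adjacencies are single edges of the strong product. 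This disposes of the entire $J=K_2$, $|V(H)|\geq 3$ case in one stroke, with no edge-counting needed.
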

The proof of Proposition \ref{prop:strong} follows readily from \cite{jhaslu}, and may be found in Appendix \ref{sec:appa} to follow. The graph $P_3\boxtimes P_3$ is depicted in Figure \ref{fig:str}.


\section{Proof of Proposition \ref{prop:2}}
\label{sec:prop}
We start by showing that if a Kronecker product is $3$-polytopal, then one of the factors in $K_2$.
\begin{proof}[Proof of Proposition \ref{prop:2}]
	We will label
	\[E(P_3)=\{xy,yz\}\]
	throughout this proof. By contradiction, let $G=H\wedge P_3$ be a $3$-polytope. By \cite{botmet}, $H$ contains an odd cycle. Our first goal is to show that in fact $H$ contains at least two odd cycles.
	
	Suppose for the moment that $H$ has a separating vertex $a$, and call $H_1$ a component of $H-a$. We write
	\[\overline{H_1}:=\langle V(H_1)\cup \{a\}\rangle\]
	for the corresponding generated subgraph. In $G$ we have the $3$-cut
	\[\{ax,ay,az\}.\]
	Assume that $\overline{H_1}$ has no odd cycles. Since $P_3$ has vertices of degree one, and $G$ is a $3$-polytope, it follows that $\delta(H)\geq 3$. We can thus take
	\[b\in V(\overline{H_1})\]
	at distance $2$ from $a$. Since $\overline{H_1}$ has no odd cycles, in $G-ay$ the vertices $by$ and $ax$ are in different connected components, contradiction. Therefore, if $H$ has a separating vertex, then it contains at least two odd cycles.
	
	On the other hand, let $H$ be $2$-connected. Then it has an open ear decomposition. By the handshaking lemma, unless $H$ is simply an odd cycle, the number of odd cycles in $H$ is even, and in particular there are at least two. The tensor product of an odd cycle and $P_3$ is not $3$-connected, hence we have shown that, in any case, $H$ contains at least two odd cycles $\calC_1,\calC_2$.
	
	Let $\calC_1,\calC_2$ be disjoint. There is a path from a vertex of $\calC_1$ to a vertex of $\calC_2$ containing no other vertices from either cycle. We write
	\[\calC_1=(u_1,u_2=a,u_3,\dots,u_{2l+1}), \ l\geq 1, \qquad \calC_2=(v_1,v_2=b,v_3,\dots,v_{2m+1}), \ m\geq 1,\]
	and
	\[a,w_1,w_2,\dots,w_{n},b=w_{n+1}, \quad n\geq 0\]
	is a path containing no vertices of either cycle save $a,b$ (Figure \ref{fig:uvw}).
	\begin{figure}[h!]
		\centering
		\includegraphics[width=13cm]{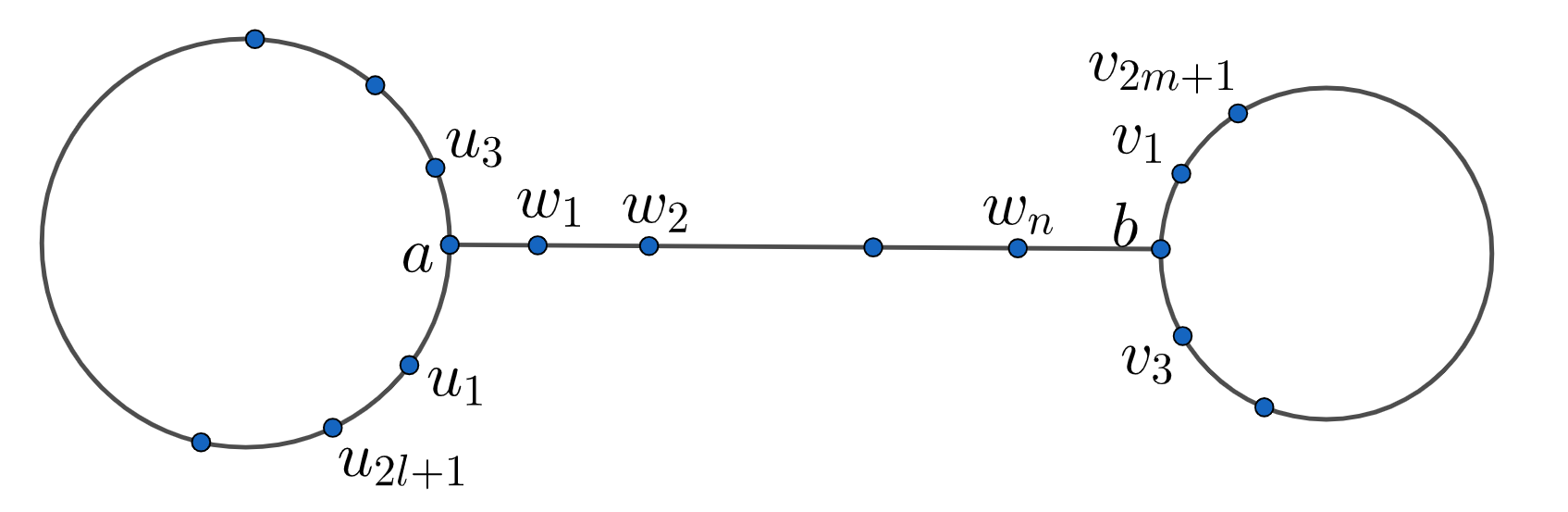}
		\caption{$H$ contains two disjoint odd cycles.}
	\label{fig:uvw}
	\end{figure}
	
	Then
	\[\{\{ax,ay,az\},\{u_1y,u_3y,w_1y\}\}\]
	determines a subgraph of $G$ homeomorphic to $K(3,3)$. Indeed, $ax$ and $az$ are each adjacent to all of $u_1y,u_3y,w_1y$ in $G$. As for $ay$, we have the internally disjoint paths
	\begin{eqnarray*}
		&ay,u_3x,u_4y,\dots,u_{2l+1}x,u_1y;
		\\&
		ay,u_1x,u_{2l+1}y,u_{2l}x,\dots,u_{3}y.
	\end{eqnarray*}
	To find a $(a,y)(w_1y)$-path in $G$ containing none of the above vertices, we consider the walk in $H$
	\[a,w_1,w_2,\dots,w_{n},b,v_3,v_4,\dots,v_{2m+1},v_1,b,w_n,w_{n-1},\dots,w_2,w_1,\]
	of odd length. We may thus take the corresponding
	\[ay,w_1x,\dots,w_2x,w_1y\]
	(see Figure \ref{fig:uvwtk2}).
	\begin{figure}[h!]
		\centering
		\includegraphics[width=14cm]{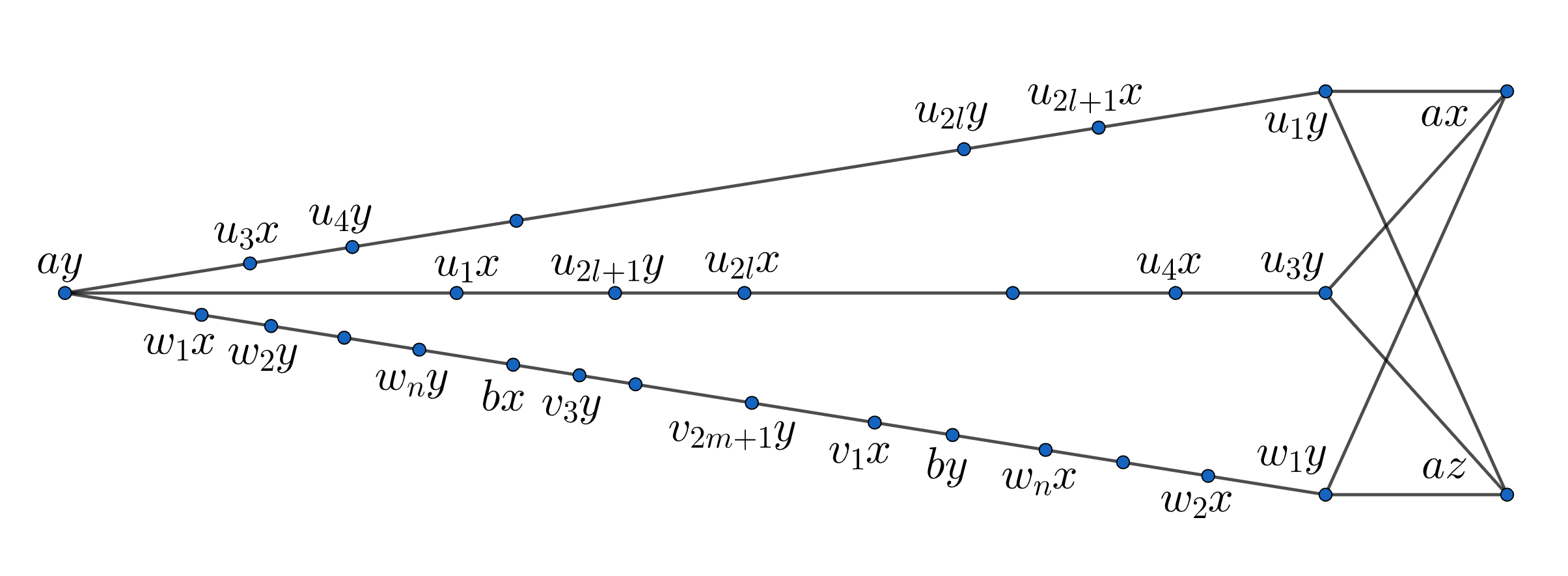}
		\caption{Subgraph of $H\wedge P_3$ homeomorphic to $K(3,3)$ -- case of two disjoint odd cycles, with $n$ even.}
		\label{fig:uvwtk2}
	\end{figure}
	
	Now instead let $\calC_1,\calC_2$ have exactly one common vertex,
	\[\calC_1=(u_1,u_2=a,u_3,\dots,u_{2l+1}), \ l\geq 1, \qquad \calC_2=(v_1,v_2=a,v_3,\dots,v_{2m+1}), \ m\geq 1.\]
	Here we take
	\[\{\{ax,ay,az\},\{u_1y,u_3y,v_1y\}\}.\]
	Again, $ax$ and $az$ are each adjacent to all of $u_1y,u_3y,v_1y$ in $G$, and we have internally disjoint paths
	\begin{eqnarray*}
		&ay,u_3x,u_4y,\dots,u_{2l+1}x,u_1y;
		\\&
		ay,u_1x,u_{2l+1}y,u_{2l}x,\dots,u_{3}y;
		\\&
		ay,v_3x,v_4y,\dots,v_{2m+1}x,v_1y.
	\end{eqnarray*}
	
	Finally, let $\calC_1,\calC_2$ have two or more common vertices. These cycles are hence in the same block of $H$. We consider the open ear decomposition for this planar block. As above we write
	\[\calC_1=(u_1,u_2,\dots,u_{2l+1}), \ l\geq 1, \qquad \calC_2=(v_1,v_2,\dots,v_{2m+1}), \ m\geq 1.\]
	
	We claim that up to relabelling one can find odd cycles $\calC_1,\calC_2$ in $H$ such that
	\begin{equation}
		\begin{cases}
			\label{eq:cases}
			u_i=v_j & \text{ for }2\leq i=j\leq I, \quad I\geq 3,
			\\
			u_i\neq v_j &\text{ otherwise.}
		\end{cases}
	\end{equation}
	Indeed, suppose that $u_2=v_2$, $u_i=v_j$, and \[\{u_3,u_4,\dots,u_{i-1}\}\cap \{v_3,v_4,\dots,v_{j-1}\}=\emptyset.\]
	Let
	\[\calC':=(u_2,u_3,\dots,u_{i-1},u_i,v_{j-1},v_{j-2},\dots,v_4,v_3)\]
	and
	\[\calC'':=(u_2,u_3,\dots,u_{i-1},u_i,v_{j+1},\dots,v_{2m+1},v_1).\]
	If $\calC'$ is an odd cycle, then the odd cycles $\calC_1,\calC'$ (after relabelling) satisfy \eqref{eq:cases}. If $\calC'$ is an even cycle, then the paths
	\[u_2,u_3,\dots,u_{i-1},u_i, \qquad \text{and}\qquad u_2,v_3,\dots,v_{j-1},u_i\]
	have the same parity, thus $\calC''$ is an odd cycle, hence $\calC_1,\calC''$ (after relabelling) satisfy \eqref{eq:cases}.
	
	Now that the claim is proven, we take $a=u_2$ and consider
	\[\{\{ax,az,u_It\},\{u_1y,u_3y,v_1y\}\},\]
	where
	\[\begin{cases}
		t=x \quad \text{ if } I \text{ is odd},
		\\
		t=y \quad \text{ if } I \text{ is even}.
	\end{cases}\]
	Here $ax$ and $az$ are each adjacent to all of $u_1y,u_3y,v_1y$ in $G$, and we have internally disjoint paths
	\begin{eqnarray*}
		&u_It,u_{I+1}t',\dots,u_{2l+1}x,u_1y;
		\\&
		u_It,v_{I+1}t',\dots,v_{2m+1}x,v_1y;
		\\&
		u_It,u_{I-1}t',\dots,u_{3}x,ay,u_1x,u_{2l+1}y,\dots,u_{I+1}t,u_It',u_{I-1}t,\dots,u_4x,u_3y.
	\end{eqnarray*}
	where $t'=y$ if $t=x$ and $t'=x$ if $t=y$.
\end{proof}

\section{Proof of Theorem \ref{thm:2}}
\label{sec:thm2}
Henceforth we consider only Kronecker products with $K_2$. The two vertices of $K_2$ will be always denoted by $x,y$. The symbol $\dot\cup$ denotes disjoint union of graphs.

\subsection{Preparatory results}
\label{sec:pre}
We begin with a few preliminaries. Some of these, or similar lemmas, have already appeared elsewhere, e.g. \cite{botmet,wanyan}. They will be useful in the proofs of Theorems \ref{thm:2}, \ref{thm:1}, \ref{thm:3}, and \ref{thm:4}.
\begin{lemma}
	\label{lem:h-v}
	For Kronecker products with $K_2$, the following hold.
	\begin{itemize}
		\item 
		If $H$ is a bipartite graph, then $H\wedge K_2=H\dot\cup H$.
		\item
		Let $H$ be a graph such that $G=H\wedge K_2$ is $3$-connected. Then for each $v\in V(H)$, the graph $H-v$ is not bipartite.
	\end{itemize}
\end{lemma}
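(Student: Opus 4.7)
The two bullets are independent, and the second uses the first, so the plan is to prove them in order.

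For the first bullet, the plan is to exhibit an explicit partition of $V(H\wedge K_2)=V(H)\times\{x,y\}$ into two sets that are each isomorphic copies of $H$ and between which no edges run. Fix a bipartition $V(H)=A\dot\cup B$ of the bipartite graph $H$, and set
\[S_1=(A\times\{x\})\cup(B\times\{y\}),\qquad S_2=(A\times\{y\})\cup(B\times\{x\}).\]
Every edge of $H$ joins some $a\in A$ to some $b\in B$, so the two edges $(a,x)(b,y)$ and $(a,y)(b,x)$ of $H\wedge K_2$ arising from $ab$ lie entirely inside $S_1$ and inside $S_2$ respectively. Hence there are no $S_1$-$S_2$ edges. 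The maps $(a,x)\mapsto a$, $(b,y)\mapsto b$ on $S_1$ (and analogously on $S_2$) are bijections respecting adjacency in both directions, giving the asserted isomorphism $H\wedge K_2\cong H\dot\cup H$.

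For the second bullet, the plan is to contrapose: assume some $H-v$ is bipartite and produce a $2$-cut in $G=H\wedge K_2$. First, observe directly from the definition of the Kronecker product that deleting the two vertices $(v,x),(v,y)$ from $G$ yields precisely the graph $(H-v)\wedge K_2$, since the surviving vertices are $(V(H)\setminus\{v\})\times\{x,y\}$ and the surviving edges are exactly the edges $(a,s)(b,t)$ with $ab\in E(H-v)$ and $st\in E(K_2)$. By the first bullet applied to the bipartite graph $H-v$, this graph is isomorphic to $(H-v)\dot\cup(H-v)$, hence disconnected. Therefore $\{(v,x),(v,y)\}$ is a $2$-cut of $G$, contradicting $3$-connectivity.

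The argument is essentially routine — the only thing to watch is that in the second bullet one needs $(H-v)\wedge K_2$ to be genuinely disconnected as a graph, which is ensured because a $3$-connected $G$ forces $|V(H)|\geq 3$ so $H-v$ is nonempty, and the first bullet produces two nonempty components. No step looks like a real obstacle; the main care lies in verifying, at the level of vertices and edges, the identity $G-\{(v,x),(v,y)\}=(H-v)\wedge K_2$ and the bijective correspondence in the first bullet.
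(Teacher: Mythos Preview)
Your proof is correct and follows essentially the same approach as the paper's: explicitly splitting $V(H\wedge K_2)$ according to a bipartition of $H$ for the first bullet, and then for the second bullet deleting $(v,x),(v,y)$ and invoking the first bullet on the bipartite $H-v$ to obtain a $2$-cut. You simply supply more detail than the paper's terse version (the explicit sets $S_1,S_2$, the adjacency-preserving bijections, and the observation that $|V(H)|\geq 3$ ensures $H-v$ is nonempty), but the underlying argument is identical.
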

\begin{proof}
	For the first statement, it suffices to note that each edge of $H\wedge K_2$ may be written as $(a,x)(b,y)$, where $a,b$ are vertices in distinct parts of the bipartite graph $H$. We now turn to the second statement. By contradiction, there exists a vertex $v$ of $H$ such that $H-v$ is bipartite. Then $(H-v)\wedge K_2$ has exactly $2$ components, thus $G-vx-vy$ is disconnected, contradiction.
\end{proof}

Recall that a subdivision of a graph $H$ is a sequence of operations where edges are replaced with simple paths of a certain length.
\begin{defin}
	We say that a subdivision of a graph is \emph{even} if each edge is replaced by a non-trivial path of even length.
\end{defin}

\begin{lemma}[cf. {\cite[Lemma 4]{bdgj09}}]
	\label{lem:subd}
	Let $H$ be a graph and $H'$ an even subdivision of $H$. Then $H\wedge K_2$ is planar if and only if $H'\wedge K_2$ is planar.
\end{lemma}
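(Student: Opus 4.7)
The plan is to realise $H'\wedge K_2$ as a topological subdivision of $H\wedge K_2$; the lemma will then follow from the classical fact that planarity is preserved and reflected by subdivision.

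Fix an edge $ab\in E(H)$ and let the path replacing it in $H'$ be
\[a=u_0,\,u_1,\,\dots,\,u_k,\,u_{k+1}=b.\]
In $H\wedge K_2$ the edge $ab$ contributes precisely the two edges $(a,x)(b,y)$ and $(a,y)(b,x)$. In $H'\wedge K_2$ the subdivided path produces two walks, one starting at $(a,x)$ and one at $(a,y)$: since a single step along a Kronecker-with-$K_2$ edge flips the $K_2$-coordinate, after the $k+1$ steps of the path the walks arrive respectively at $(b,y)$ and $(b,x)$, \emph{provided $k+1$ is odd} -- and this is precisely what the paper's "even length" hypothesis encodes. The two walks are moreover internally vertex-disjoint, because each interior $u_i$ produces two distinct vertices $(u_i,x),(u_i,y)$ of $H'\wedge K_2$, one lying on each walk.

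Performing this replacement simultaneously for every edge of $H$, and noting that paths coming from distinct edges of $H$ meet in $H'\wedge K_2$ only at vertices of the form $(v,x),(v,y)$ with $v\in V(H)$, exhibits $H'\wedge K_2$ as the graph obtained from $H\wedge K_2$ by subdividing each edge (and the two edges from a common $ab$ are subdivided by the same number of new vertices). Since planarity is invariant under topological subdivision, $H\wedge K_2$ and $H'\wedge K_2$ are planar together.

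The only subtle point is the parity bookkeeping: one really needs $(a,x)$ to travel to $(b,y)$ and not to $(b,x)$. If the opposite parity were allowed, the replacement would \emph{uncross} the pair of edges coming from each $ab$, and this is not planarity-preserving in general: for instance, when $H$ is the octahedron, $H\wedge K_2$ is non-planar, while subdividing every edge of $H$ with a single new vertex produces a bipartite planar graph $H'$ for which $H'\wedge K_2=H'\,\dot\cup\,H'$ is planar. This is precisely why the word "even" appears in the hypothesis, and once the parity is set correctly the remainder of the proof is bookkeeping.
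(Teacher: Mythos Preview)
Your proof is correct and follows exactly the paper's approach: the paper simply asserts that $H'\wedge K_2$ is a subdivision of $H\wedge K_2$ and invokes Kuratowski, whereas you spell out the parity tracking that makes this assertion true. Your additional remark on why the hypothesis cannot be weakened (the octahedron example, which indeed contains a square pyramid and so has non-planar Kronecker cover by Corollary~\ref{cor:4pyr}) is a nice complement but not part of the paper's proof.
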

\begin{proof}
	It suffices to note that if $H'$ is an even subdivision of $H$, then $H'\wedge K_2$ is a subdivision of $H\wedge K_2$. The result now follows from Kuratowski's Theorem.
\end{proof}

\begin{cor}
	\label{cor:4pyr}
	Let $H$ be a graph containing an even subdivision of the square pyramid. Then $H\wedge K_2$ is non-planar.
\end{cor}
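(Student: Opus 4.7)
The plan is to reduce the claim, via Lemma \ref{lem:subd}, to a single finite check: that the Kronecker product of the square pyramid $S$ itself with $K_2$ is non-planar. If $H$ contains an even subdivision $S'$ of $S$ as a subgraph, then directly from the definition of $\wedge$, $S'\wedge K_2$ is a subgraph of $H\wedge K_2$; and by the argument used in Lemma \ref{lem:subd}, $S'\wedge K_2$ is itself a subdivision of $S\wedge K_2$. Non-planarity of $S\wedge K_2$ would therefore propagate to $H\wedge K_2$ via Kuratowski's theorem.

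I therefore focus on exhibiting a $K_{3,3}$-subdivision inside $S\wedge K_2$. Label the apex of $S$ by $a$ and the four base vertices cyclically by $b_1,b_2,b_3,b_4$, and take as branch vertices
\[
U=\{(a,x),(b_2,x),(b_4,x)\},\qquad V=\{(a,y),(b_1,y),(b_3,y)\}.
\]
A direct check from the definition of $\wedge$ shows that every pair in $U\times V$ is joined by a single edge of $S\wedge K_2$ except the pair $\{(a,x),(a,y)\}$, since $a$ is non-adjacent to itself. For this missing pair I propose the length-three path
\[
(a,x),\; (b_2,y),\; (b_1,x),\; (a,y),
\]
whose two interior vertices $(b_2,y),(b_1,x)$ lie outside $U\cup V$, and whose three edges are distinct from the eight direct $U$–$V$ edges. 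The resulting nine internally disjoint paths form a genuine subdivision of $K_{3,3}$.

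The main obstacle is simply to locate such a clean Kuratowski subgraph inside $S\wedge K_2$: one has to reconcile the bipartition of $S\wedge K_2$ with the non-bipartiteness of $S$, choosing branch vertices so that the single absent direct edge --- necessarily of the form $\{(v,x),(v,y)\}$ for some $v\in V(S)$ --- can be bypassed by a short odd walk in $S$ avoiding the other branch vertices. Once this combinatorial configuration is in hand, combining it with Lemma \ref{lem:subd} and Kuratowski's theorem completes the argument.
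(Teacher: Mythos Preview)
Your argument is correct and follows exactly the paper's approach: verify that $S\wedge K_2$ is non-planar (the paper simply writes ``one checks'' here, whereas you exhibit an explicit $K_{3,3}$-subdivision), then invoke Lemma~\ref{lem:subd}. Your explicit handling of the passage from ``$H$ contains an even subdivision of $S$'' to the conclusion is also sound and slightly more careful than the paper's one-line proof.
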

\begin{proof}
	One checks that the Kronecker product of the square pyramid and $K_2$ is non-planar, and then one applies Lemma \ref{lem:subd}.
\end{proof}

For example, any $2n+2$-gonal pyramid, $n\geq 1$, contains an even subdivision of the square pyramid, hence its Kronecker product with $K_2$ is non-planar.

We end this section by proving Proposition \ref{prop:conn}.
\begin{proof}[Proof of Proposition \ref{prop:conn}]
Let $H$ be a graph such that $\calP=H\wedge K_2$ is a $3$-polytopal $(p,q)$ graph. Since $\calP$ is bipartite, all of its faces are even. In particular, each is bounded by a cycle of length at least $4$. Double counting on the number of edges thus yields
\[2q\geq 4F,\]
where $F$ is the total number of faces in $\calP$. Via Euler's formula for planar graphs, we deduce
\[2q\leq 4p-8,\]
thus by the handshaking lemma $\calP$ has at least $8$ vertices of degree $3$. Since $\calP=H\wedge K_2$, then $H$ has at least $4$ vertices of degree $3$.
\end{proof}
Figures \ref{fig:c3} and \ref{fig:npl} depict extremal graphs with respect to this property, where $H$ has exactly $4$ vertices of degree $3$. The above proof shows that this happens if and only if $\calP$ is a quadrangulation of the sphere.

\subsection{Proof of Theorem \ref{thm:2}, $\Rightarrow$}
The assumptions are, $H$ is planar, of vertex connectivity $2$, such that $\calP=H\wedge K_2$ is a $3$-polytope.
We start with the following.

\begin{lemma}
	\label{lem:2cut}
	Let $\{a,b\}$ be a $2$-vertex-cut in a graph $H$, and $H\wedge K_2$ a $3$-polytope. Then the connected components of
	\[H-a-b\]
	are not bipartite. Moreover, there are exactly two components.
\end{lemma}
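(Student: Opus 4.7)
The plan is to prove both parts by contraposition: assume either some component $H_i$ of $H-a-b$ is bipartite, or $H-a-b$ has at least three components, and exhibit a $2$-cut of $\calP = H\wedge K_2$, contradicting $3$-connectivity. The unifying observation is that since $\{a,b\}$ separates $V(H_i)$ from the rest of $V(H)$, the induced subgraph of $\calP$ on $V(H_i)\times V(K_2)$ is exactly $H_i\wedge K_2$, and every edge leaving this set has an endpoint in the hub set $W=\{(a,x),(a,y),(b,x),(b,y)\}$.

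For the non-bipartiteness claim, suppose $H_1$ is bipartite with parts $A_1,B_1$. By Lemma \ref{lem:h-v}, $H_1\wedge K_2$ has two connected components
\[
U = (A_1\times\{x\}) \cup (B_1\times\{y\}), \qquad T = (A_1\times\{y\}) \cup (B_1\times\{x\}).
\]
A direct check shows $N_\calP(U)\setminus U\subseteq W$, and that $(a,y),(a,x),(b,y),(b,x)$ lie in this neighborhood precisely when $a\sim A_1$, $a\sim B_1$, $b\sim A_1$, $b\sim B_1$ respectively. Writing $n_a,n_b\in\{1,2\}$ for the number of parts of $H_1$ containing a neighbor of $a$ and $b$, we get $|N_\calP(U)\setminus U|=n_a+n_b$. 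If $n_a+n_b\le 2$ we already have a $2$-cut of $\calP$ separating $U$ from the rest, contradicting $3$-connectivity. The substantive sub-case is $n_a+n_b\ge 3$ (in which $\overline{H_1}$ is forced to be non-bipartite, because a path in $H_1$ between a neighbor of $a$ in $A_1$ and one in $B_1$ has odd length). Here my plan is to enlarge $U$ by a single, carefully chosen hub vertex and re-examine its boundary, then feed the resulting boundary count into the planar-quotient inequality introduced in the next step. This sub-case is the main obstacle.

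For the ``exactly two components'' claim, let $H-a-b$ have components $H_1,\dots,H_k$ with $k_1$ non-bipartite and $k_2$ bipartite, $k=k_1+k_2$. Contract each connected component of each $H_i\wedge K_2$ to a single vertex, while retaining the four hubs of $W$; call the result $\calP'$. Then $\calP'$ is a bipartite minor of $\calP$, hence planar, with $|V(\calP')|=4+k_1+2k_2$. By the analysis above, each contracted vertex arising from a non-bipartite $H_i$ has degree $4$ in $\calP'$, and each from a bipartite $H_i$ has degree at least $3$. The planar bipartite bound gives
\[
4k_1+6k_2 \;\le\; |E(\calP')| \;\le\; 2|V(\calP')|-4 \;=\; 2k_1+4k_2+4,
\]
which simplifies to $k_1+k_2\le 2$, contradicting $k\ge 3$.

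The hardest step will be ruling out the mixed configurations (one bipartite plus one non-bipartite, or two bipartite components) left open by the bound $k_1+k_2\le 2$ alone; I anticipate that in these configurations the minimum-degree constraint $\delta(H)\ge 3$ from Proposition \ref{prop:conn}, together with the enlargement-of-$U$ trick, forces the quotient inequality to be strict, and thereby completes both parts of the lemma simultaneously.
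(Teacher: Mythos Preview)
Your proposal has two gaps, one cosmetic and one substantive.

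\textbf{Cosmetic.} The quotient $\calP'$ is \emph{not} bipartite when $ab\in E(H)$: the edges $(a,x)(b,y)$ and $(a,y)(b,x)$ lie inside $W$, and together with any contracted vertex of degree~$4$ they form triangles. The fix is easy---delete those two hub--hub edges before counting; the resulting subgraph is still a planar minor of $\calP$, and the degrees of the contracted vertices are unaffected---but as written the inequality $|E(\calP')|\le 2|V(\calP')|-4$ is unjustified.

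\textbf{Substantive.} Your inequality yields only $k_1+k_2\le 2$. That is the \emph{second} assertion of the lemma. The \emph{first} assertion---$k_2=0$---is exactly what you label ``the hardest step'' and never prove. The enlargement-of-$U$ idea is not carried out, and the hope that $\delta(H)\ge 3$ will force strictness is unfounded: in the surviving configuration $k_1=1$, $k_2=1$, $n_a+n_b=3$, your bipartite quotient has exactly $10=2\cdot 7-4$ edges and is genuinely planar (it is $K(4,3)$ minus two disjoint edges), so no amount of edge-counting on this minor will close the case. You would need a finer argument that looks inside the blobs.

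The paper organises the two claims in the opposite order, and this is what makes its argument short. It first disposes of bipartite components by a pure $3$-connectivity argument---asserting that one of the four pairs $\{(a,x),(b,x)\}$, $\{(a,x),(b,y)\}$, $\{(a,y),(b,x)\}$, $\{(a,y),(b,y)\}$ is already a $2$-cut---with no appeal to planarity at all. Only afterwards, knowing every $H_i$ is non-bipartite (so each $H_i\wedge K_2$ is connected and meets all four hubs), does it invoke planarity: picking one $v_i$ per component, the vertices $\{ax,ay,bx,by\}$ and $\{v_1x,\dots,v_mx\}$ determine a subgraph homeomorphic to $K(4,m)$, whence $m\le 2$ by Kuratowski. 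Your quotient-and-count is a repackaging of this $K(4,m)$ step; but by trying to handle bipartite and non-bipartite components simultaneously you lose the degree-$4$ information that makes the planarity step decisive, and you are left with precisely the residual cases you cannot finish.
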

\begin{proof}
By contradiction, if a component of $H-a-b$ were bipartite, then one of
\begin{equation}
	\label{eq:abxy}
	\{ax,bx\}, \qquad \{ax,by\}, \qquad \{ay,bx\}, \qquad \{ay,by\}
\end{equation}
would form a $2$-cut in $H\wedge K_2$ (cf. Lemma \ref{lem:h-v}), impossible.

To prove the second statement, let $m\geq 2$ be the number of connected components in $H-a-b$, and $v_1,v_2,\dots,v_m$ vertices in distinct components. By the first part, the vertices
\[\{\{ax,ay,bx,by\},\{v_1x,v_2x,\dots,v_mx\}\}\]
determine a subgraph homeomorphic to $K(4,m)$ in $H\wedge K_2$. By Kuratowski's Theorem, $m=2$.
\end{proof}

By Lemma \ref{lem:2cut}, whenever $\{a,b\}$ is a $2$-cut in $H$, the graph $H-a-b$ has exactly two components, each of which contains an odd region. In particular, $H$ has at least two disjoint odd regions. This implication of Theorem \ref{thm:2} will be completely proven once we show the following.

\begin{lemma}
	\label{lem:ll}
	Let $H$ be a planar	graph such that $H\wedge K_2$ is a $3$-polytope. Suppose that $H$ has two disjoint odd regions. Then every odd region of $H$, save possibly two, contains a $2$-cut.
\end{lemma}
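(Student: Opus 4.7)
We proceed by contradiction. Suppose that $H$ contains three odd regions $r_1,r_2,r_3$, with bounding cycles $C_1,C_2,C_3$ respectively, such that no pair of vertices lying on $V(C_i)$ is a $2$-cut of $H$. The plan is to exhibit in $\calP=H\wedge K_2$ a subdivision of $K(3,3)$, contradicting planarity via Kuratowski's theorem. The key leverage is that each odd cycle $C_i$ of length $2\ell_i+1$ becomes a single cycle of length $4\ell_i+2$ in $\calP$; traversing it once sends $(v,x)$ to $(v,y)$ for every $v\in V(C_i)$, so each $C_i$ supplies a ``parity-flipping'' bridge between the two natural copies of $H$ that sit inside $\calP$.

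By Lemma \ref{lem:2cut}, every $2$-cut $\{a,b\}$ of $H$ yields exactly two non-bipartite components of $H-a-b$. Since $|V(C_i)\cap\{a,b\}|\leq 1$, the set $V(C_i)\setminus\{a,b\}$ is a path contained in a single component, so each $r_i$ is assigned a well-defined ``side'' of every $2$-cut. Arrange the $2$-cuts of $H$ into the natural tree $T$ whose nodes are the maximal subgraphs obtained by splitting $H$ along all of its $2$-cuts and whose edges are the $2$-cuts themselves. Each free odd region $r_i$ is then confined to a unique node $N_i$ of $T$. The hypothesis that $H$ has two disjoint odd regions, together with Lemma \ref{lem:2cut} (each side of every $2$-cut carries an odd region), ensures that $T$ has a ``backbone'' with odd regions populating both extremes. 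I then split on the position of $N_1,N_2,N_3$: if $T$ branches at these nodes (they are not all collinear in $T$), Menger's theorem together with $2$-connectivity delivers three pairwise vertex-disjoint arms $A_1,A_2,A_3\subseteq H$ with $C_i\subseteq A_i$ attached at a common triple of vertices; if the $N_i$ are pairwise distinct but collinear on a path of $T$, the middle node $N_i$ is sandwiched between two $2$-cuts and a careful tracing using $\delta(H)\geq 3$ (Proposition \ref{prop:conn}) shows that $C_i$ must pass through a vertex of an adjacent $2$-cut, violating its freeness; if two $N_i$ coincide, a local extraction inside the corresponding $3$-connected piece again yields a compatible triple of arms.

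With three pairwise vertex-disjoint arms and three odd cycles in hand, I assemble the $K(3,3)$-subdivision in $\calP$ by pairing the three parity-flipping loops (one per $C_i$) with the three arms, in the spirit of the three sub-cases in the proof of Proposition \ref{prop:2}: pick six vertices in $\calP$ (typically of the form $(u,x),(u,y),(v,x),(v,y)$ for a suitable hub pair $u,v$, together with two more drawn from the arms), then route the nine internally disjoint paths using the arms to connect within one copy of $H$ and the $C_i$ to flip between copies. The principal obstacle is precisely this final explicit assembly, since the nine paths must be kept genuinely internally disjoint while threading through three arms and three odd cycles of possibly different parities of position. A secondary obstacle is the ``collinear'' case in $T$: the contradiction there is subtle, and requires simultaneously exploiting the minimum-degree bound $\delta(H)\geq 3$ and the non-bipartiteness of every component produced by a $2$-cut furnished by Lemma \ref{lem:2cut}.
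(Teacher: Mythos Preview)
Your proposal is a strategic outline rather than a proof, and you say so yourself (``the principal obstacle is precisely this final explicit assembly''). There are two concrete gaps.

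\textbf{The collinear case is not handled.} You claim that if the middle node $N_i$ is sandwiched between two $2$-cuts, then $C_i$ must pass through a vertex of an adjacent $2$-cut, ``violating its freeness''. But the hypothesis on $r_i$ is only that no \emph{pair} of vertices on $C_i$ forms a $2$-cut of $H$; a single $2$-cut vertex lying on $C_i$ does not contradict that. More importantly, you give no reason why $C_i$ must touch a $2$-cut vertex at all: a $3$-connected piece of $H$ can perfectly well contain an odd face disjoint from both attaching pairs. So this case does not self-destruct; you are back to needing the $K(3,3)$ construction here too.

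\textbf{The $K(3,3)$ is never built.} Gesturing at ``the spirit of the three sub-cases in the proof of Proposition~\ref{prop:2}'' does not suffice: there the configuration was two odd cycles and a connecting path, whereas here you have three odd cycles with a priori arbitrary intersection pattern, sitting inside three ``arms'' whose attachment behaviour you have not pinned down (``a common triple of vertices'' is asserted, not proved, and is not what Menger gives you). Producing nine internally disjoint paths in $\calP$ from this data is exactly the content of the lemma, and it is not done.

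For comparison, the paper also argues by contradiction with three $2$-cut-free odd regions $f_1,f_2,f_3$ (taking $f_1,f_2$ to be the two given disjoint ones), but instead of assembling a $K(3,3)$ directly in $\calP$, it shows that $H$ contains an even subdivision of one of fifteen small explicit graphs, enumerated by how $f_3$ meets $f_1$ and $f_2$ and by the parities of the connecting paths. It then checks case by case that each of these fifteen graphs has non-planar Kronecker product with $K_2$, and concludes via Lemma~\ref{lem:subd}. This trades your structural route for a finite verification: less conceptual, but it actually closes.
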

\begin{proof}
	Recall that if $H\wedge K_2$ is a $3$-polytope, then $H$ is $2$-connected. By contradiction, let $f_1,f_2$ be two disjoint odd regions, and $f_3\neq f_1,f_2$ an odd region such that none of $f_1,f_2,f_3$ contain a $2$-cut set of $H$. Then we claim that $H$ contains a subgraph that is an even subdivision of one of the graphs in Figure \ref{fig:ll}. Indeed, say that $f_3$ does not share any vertices with $f_1,f_2$. Then $H$ contains a subgraph that is an even subdivision of one of the graphs in Figures \ref{fig:l01}, \ref{fig:l02}, \ref{fig:l03}, and \ref{fig:l04}, depending on the existence of paths between $f_1,f_2$; $f_1,f_3$; $f_2,f_3$ of odd or even lengths. 
	\begin{figure}[h!]
		\centering
		\begin{subfigure}{0.19\textwidth}
			\centering
			\includegraphics[width=\widone]{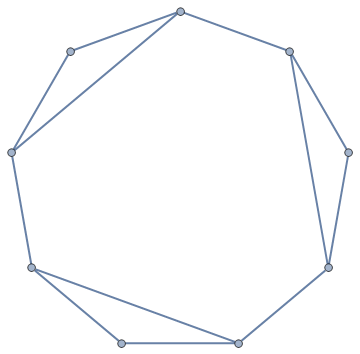}
			\caption{}
			\label{fig:l01}
		\end{subfigure}
		\hfill
		\begin{subfigure}{0.19\textwidth}
			\centering\includegraphics[width=\widone]{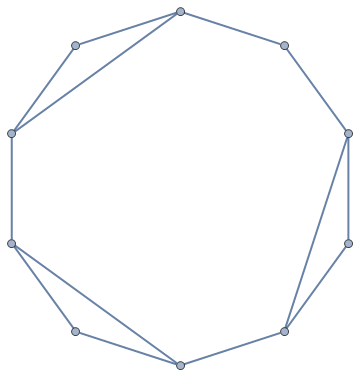}
			\caption{}
			\label{fig:l02}
		\end{subfigure}
		\hfill
		\begin{subfigure}{0.19\textwidth}
			\centering\includegraphics[width=\widone]{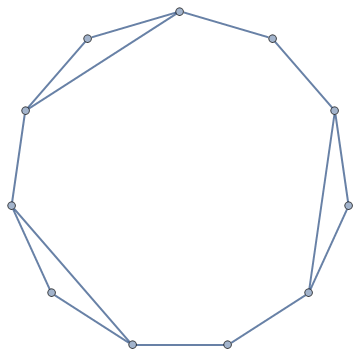}
			\caption{}
			\label{fig:l03}
		\end{subfigure}
		\hfill
		\begin{subfigure}{0.19\textwidth}
			\centering\includegraphics[width=\widone]{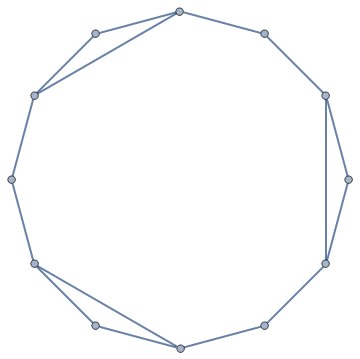}
			\caption{}
			\label{fig:l04}
		\end{subfigure}
		\hfill
		\begin{subfigure}{0.19\textwidth}
			\centering\includegraphics[width=\widone]{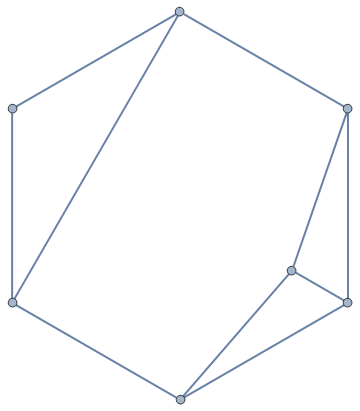}
			\caption{}
			\label{fig:l05}
		\end{subfigure}
		\hfill
		\begin{subfigure}{0.19\textwidth}
			\centering\includegraphics[width=\widone]{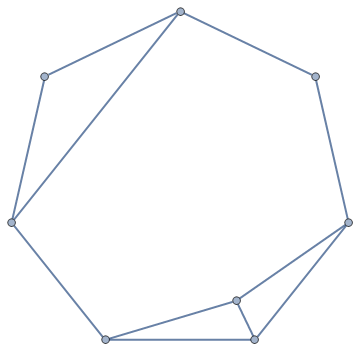}
			\caption{}
			\label{fig:l06}
		\end{subfigure}
		\hfill
		\begin{subfigure}{0.19\textwidth}
			\centering\includegraphics[width=\widone]{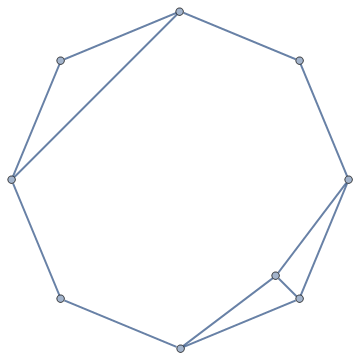}
			\caption{}
			\label{fig:l07}
		\end{subfigure}
		\hfill
		\begin{subfigure}{0.19\textwidth}
			\centering\includegraphics[width=\widone]{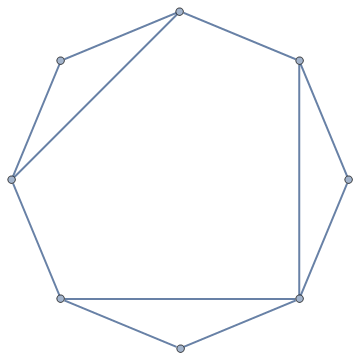}
			\caption{}
		\end{subfigure}
		\hfill
		\begin{subfigure}{0.19\textwidth}
			\centering\includegraphics[width=\widone]{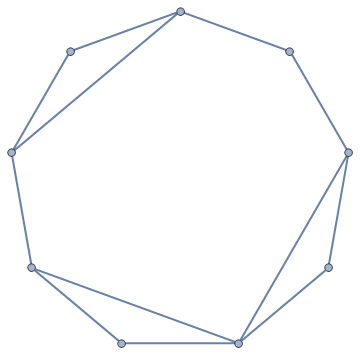}
			\caption{}
		\end{subfigure}
		\hfill
		\begin{subfigure}{0.19\textwidth}
			\centering\includegraphics[width=\widone]{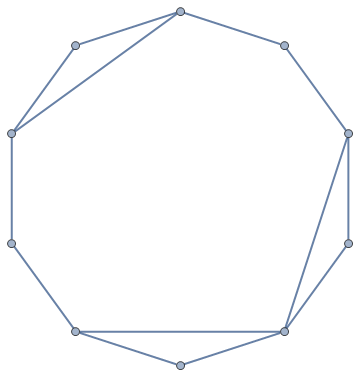}
			\caption{}
		\end{subfigure}
		\hfill
		\begin{subfigure}{0.19\textwidth}
			\centering\includegraphics[width=\widone]{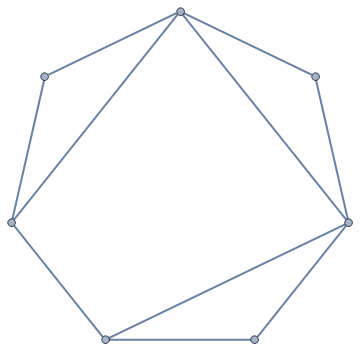}
			\caption{}
			\label{fig:l11}
		\end{subfigure}
		\hfill
		\begin{subfigure}{0.19\textwidth}
			\centering\includegraphics[width=\widone]{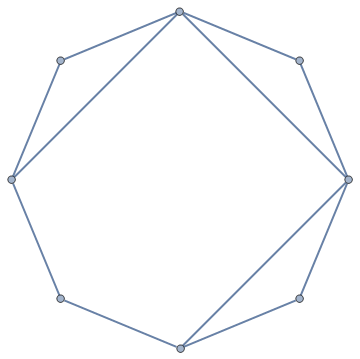}
			\caption{}
			\label{fig:l12}
		\end{subfigure}
		\hfill
		\begin{subfigure}{0.19\textwidth}
			\centering\includegraphics[width=\widone]{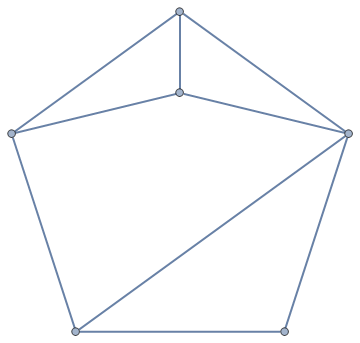}
			\caption{}
			\label{fig:l13}
		\end{subfigure}
		\hfill
		\begin{subfigure}{0.19\textwidth}
			\centering\includegraphics[width=\widone]{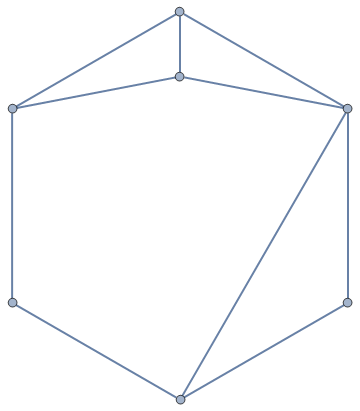}
			\caption{}
			\label{fig:l14}
		\end{subfigure}
		\hfill
		\begin{subfigure}{0.19\textwidth}
			\centering\includegraphics[width=\widone]{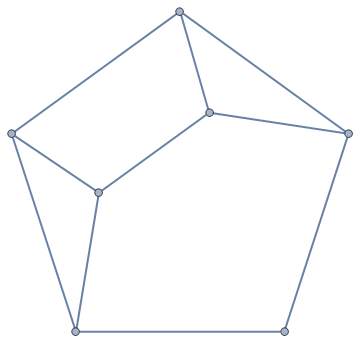}
			\caption{}
			\label{fig:l15}
		\end{subfigure}
		\caption{Lemma \ref{lem:ll}. Assume that $H\wedge K_2$ is a $3$-polytope, and that the planar $H$ has three odd regions with no $2$-cut,
		two of them disjoint. Then there exists a subgraph of $H$ that is an even subdivision of one of the depicted graphs.}
		\label{fig:ll}
	\end{figure}

All other cases (i.e., $f_3$ sharing vertices with $f_1,f_2$) are similarly depicted in Figures \ref{fig:l05}-\ref{fig:l15}. By Lemma \ref{lem:subd}, it suffices to check that each of the graphs in Figure \ref{fig:ll} has a non-planar Kronecker product with $K_2$, contradiction.
\end{proof}

The proof of the $\Rightarrow$ implication in Theorem \ref{thm:2} is complete.

\subsection{Proof of Theorem \ref{thm:2}, $\Leftarrow$}
\label{sec:thm2s}
We assume all of the following: $H$ is planar, of vertex connectivity $2$; all odd regions except exactly two contain a $2$-cut, and moreover if $\{a,b\}$
is a $2$-cut in $H$, then $H-a-b$
has exactly two connected components, each of them containing at least one odd region.

We will start with the special case of exactly two odd regions.
\begin{lemma}
	\label{lem:2odre}
	Let $H$ be a $2$-connected, plane graph with exactly two odd regions, which are disjoint. Suppose further that, if $\{a,b\}$ is a $2$-cut in $H$, then $H-a-b$
	has exactly two connected components, each of them containing an odd region. Then $H\wedge K_2$ is a $3$-polytope.
\end{lemma}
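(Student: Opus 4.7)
The plan is to construct $\calP=H\wedge K_2$ explicitly, exhibit a planar embedding, and verify 3-connectivity by ruling out 2-cuts.

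First I would build a planar bipartite spanning subgraph $H'$ of $H$. Fix a planar embedding of $H$ and let $f_1,f_2$ denote its two disjoint odd regions. Choose a simple path $f_1^* = g_0, g_1, \dots, g_k = f_2^*$ in the dual $H^*$, and let $e_i = a_i b_i$ be the primal edge corresponding to $g_{i-1}g_i$. Set $H' := H - e_1 - \cdots - e_k$. Removing each edge merges its two incident faces into one of size equal to the sum of the two sizes minus $2$. Traversing the path, the merged face starts as $f_1$ (odd), absorbs the even faces $g_1,\dots,g_{k-1}$ while staying odd, and finally absorbs $f_2$ (odd), ending up an even-sized region $r$. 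All other faces of $H'$ are unchanged and hence even. So $H'$ is planar bipartite, and by Lemma \ref{lem:h-v}, $H'\wedge K_2 \cong H'\dot\cup H'$ consists of two copies $G_1,G_2$ of $H'$.

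Next I would draw $\calP$ in the plane by nesting the two copies: embed $G_1$ with $r$ as its outer region, and $G_2$ inside $r$ also with $r$ as its outer region, mirrored so the cyclic orders on the two copies of $\partial r$ are compatible. Since $H$ is not bipartite, the endpoints of each removed edge $e_i$ lie in the same part of $H'$'s bipartition, so the added edges $(a_i,x)(b_i,y)$ and $(a_i,y)(b_i,x)$ genuinely cross between the two nested copies. The simplicity of the dual path forces the cyclic order $a_1,a_2,\dots,a_k,b_k,b_{k-1},\dots,b_1$ along $\partial r$ (a nested parenthesis-like pattern), so the $2k$ twisted edges can be drawn in the annular region between the two copies without crossings. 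This yields a planar embedding of $\calP$.

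Then I would verify 3-connectivity by case analysis on a hypothetical 2-cut $\{(v_1,\sigma_1),(v_2,\sigma_2)\}$ of $\calP$. When $v_1=v_2=a$, $\calP$ minus the cut equals $(H-a)\wedge K_2$; this is connected because $H-a$ is connected (as $H$ is 2-connected) and contains an odd cycle, namely the boundary of whichever of $f_1,f_2$ does not contain $a$ (using disjointness). When $v_1\ne v_2$, write $\{v_1,v_2\}=\{a,b\}$: if $\{a,b\}$ is not a 2-cut in $H$, connectivity of the remaining product follows by routing paths through the opposite copies $(a,\bar\sigma_1),(b,\bar\sigma_2)$ and using non-bipartiteness of $H-a-b$; if $\{a,b\}$ is a 2-cut in $H$, the hypothesis that $H-a-b$ has exactly two components each containing an odd region lets us reconnect both sides of $\calP$ through those opposite copies, using odd cycles inside each component to switch between the $x$- and $y$-layers.

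The main obstacle is the second step: showing that the cyclic order of the $a_i,b_i$ on $\partial r$ really is a non-crossing nested pattern, so that the twisted edges admit an annular drawing. This relies on the simplicity of the dual path and on a careful choice of planar embedding of $H'$, which interacts with the 2-cut hypothesis when $H$ is only 2-connected, since those 2-cuts constrain which region of $H'$ may play the role of the outer face $r$.
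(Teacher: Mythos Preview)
Your construction is exactly the paper's: pick a dual path from $f_1^*$ to $f_2^*$, delete the corresponding primal edges to obtain a planar bipartite $H'$, nest two copies of $H'$ sharing the merged region $r$ as outer face, and reinsert the deleted edges as twisted pairs. One refinement the paper makes that you should adopt: the dual path is taken to be \emph{minimal} (equivalently, induced --- non-consecutive faces in the list are never adjacent), not merely simple. This is what guarantees that $H'$ is still $2$-connected, so that $\partial r$ is a bona fide cycle and the nested order $a_1,\dots,a_k,b_k,\dots,b_1$ holds; with a simple but non-induced dual path two non-consecutive $g_i$ may share an edge, and that edge would then bound $r$ on both sides.

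There is a genuine gap in your $3$-connectivity case analysis. In the sub-case $v_1\neq v_2$ with $\{a,b\}$ not a $2$-cut of $H$, you invoke ``non-bipartiteness of $H-a-b$'', but this can fail under the hypotheses. Take $H$ the triangular prism: it has exactly two disjoint odd faces and is $3$-connected, so the lemma's hypotheses hold vacuously; yet if $a$ lies on one triangle and $b$ on the other, $H-a-b$ is a $4$-vertex path, connected but bipartite. What does hold is that $H-a$ and $H-b$ are each non-bipartite (each retains one of the disjoint odd faces). When $H-a-b$ happens to be bipartite with parts $P_1,P_2$, non-bipartiteness of $H-b$ forces the neighbours of $a$ in $H-a-b$ to meet both $P_1$ and $P_2$, so $(a,\bar\sigma_1)$ is adjacent to vertices in both components of $(H-a-b)\wedge K_2$; symmetrically for $(b,\bar\sigma_2)$. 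That repairs the argument. The paper's own $3$-connectivity argument is organised differently --- it locates every $2$-cut of $H$ on $\partial r$, away from the arcs between $a_1,b_1$ and between $a_k,b_k$, and reads off $3$-connectivity from the embedding --- but your direct case analysis, once patched as above, is equally valid.
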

\begin{proof}
	Call $f,f'$ the two odd regions of $H$. We consider a list of distinct regions
	\begin{equation}
		\label{eq:list}
		f_1=f,f_2,\dots,f_{m},f_{m+1}=f', \quad m\geq 2,
	\end{equation}
	where consecutive regions in the list share an edge, and the length of the list is minimal, in the sense that non-consecutive regions in \eqref{eq:list} are not adjacent. We write $a_{i}b_{i}$ for the edge between $f_{i},f_{i+1}$, $1\leq i\leq m$. Let
	\[H':=H-a_1b_1-a_2b_2-\dots-a_{m}b_{m}.\]
	Up to relabelling, there is a region $r$ in $H'$ containing, among other vertices,
	\begin{equation}
		\label{eq:ab}
		a_1,a_2,\dots,a_{m},b_{m},b_{m-1},\dots,b_1
	\end{equation}
	in this order (Figure \ref{fig:odreo}). Some of these vertices may coincide, however, note that $a_1,b_1,a_{m},b_{m}$ are all distinct since $f,f'$ are disjoint.
	\begin{figure}
		\centering
		\includegraphics[width=7.25cm]{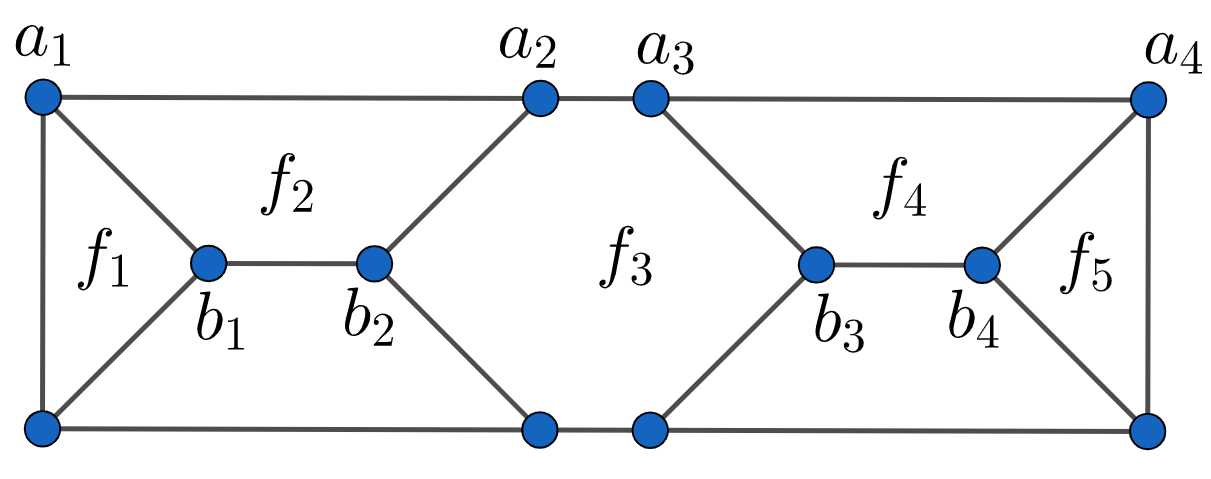}
		\hspace{0.25cm}
		\includegraphics[width=7.25cm]{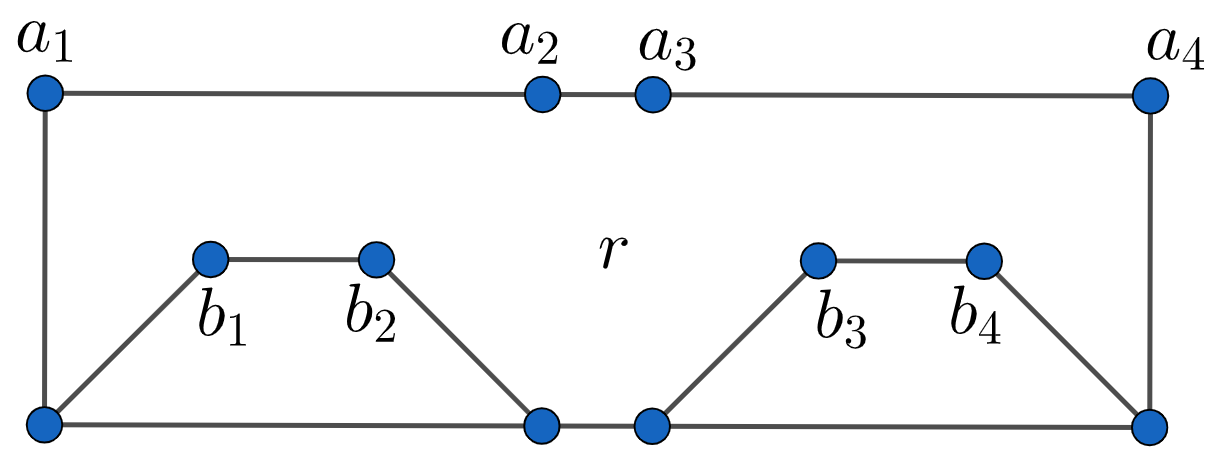}
		\caption{A graph $H$ satisfying the assumptions of Theorem \ref{thm:2}, with exactly two odd regions (left), and a possible subgraph $H'$ (right).}
		\label{fig:odreo}
	\end{figure}
	
	Let us check that $H'$ is $2$-connected. Equivalently, we check that each region of $H'$ is bounded by a cycle. By construction, it suffices to show that $r$ is bounded by a cycle, and in particular that no vertex on its boundary has degree $1$ in $H'$. By contradiction, let
	\[\deg_{H'}(a_i)=1,\]
	for some $i=1,\dots,m$. Since $\delta(H)\geq 3$, then $a_ib_j$, $a_ib_{j+1}$, \dots, $a_ib_{j+k}$ are edges in $H$, for some $k\geq 1$ and $1\leq j\leq m-k$. Thereby, $f_{j}$ and $f_{j+k+1}$ are in fact adjacent regions in $H$, contradicting the definition of \eqref{eq:list}.
	
	By construction, the graph $H'$ is bipartite, as all of its regions are even. We sketch two copies of it in the plane, with $r$ the external region in both. We then add the edges
	\[(a_i,x)(b_iy), \quad (a_i,y)(b_ix), \quad 1\leq i\leq m.\]
	It is clear that the resulting $H\wedge K_2$ is still planar.
	
	It remains to show $3$-connectivity for $H\wedge K_2$. As remarked above, $a_1,b_1,a_{m},b_{m}$ are distinct vertices. Moreover, in the above construction of $H\wedge K_2$, the vertices
	\[(a_1,x),(b_1,x)\]
	(resp. $(a_{m},x),(b_{m},x)$) lie on the same copy of $H'$, since $f$ (resp. $f'$) is an odd region. Hence there are at least four distinct edges \[(a_1,x)(b_1y), \quad (a_1,y)(b_1x), \quad (a_{m},x)(b_{m}y), \quad (a_{m},y)(b_{m}x)\]
	connecting the two copies of $H'$. Lastly, let $\{a,b\}$ be any $2$-cut in $H$. By assumption, $H-a-b$
	has exactly two components, each of them containing an odd region. Hence $a,b$ lie on $r$, but not between $a_1,b_1$ or $a_m,b_m$, in the sense that, if
	\[v_1,v_2,\dots,v_V,a_1,a_2,\dots,a_m,w_1,w_2,\dots,w_W,b_m,b_{m-1},\dots,b_1\]
	lie on $r$ in this order, then $\{a,b\}$ is not a subset of
	\[\text{either } \{b_1,v_1,v_2,\dots,v_{V},a_1\} \ \text{ or }\{a_m,w_1,w_2,\dots,w_{W},b_m\}.\]
	Thereby, $H\wedge K_2$ is $3$-connected.
\end{proof}

We will adapt the proof of Lemma \ref{lem:2odre} to the more general case. We consider a planar embedding of $H$. Let $f,f'$ be the two odd regions with no $2$-cut. One takes a list of regions
\begin{equation}
	\label{eq:listgen}
	f_1=f,f_2,\dots,f_{m},f_{m+1}=f', \quad m\geq 2,
\end{equation}
where consecutive regions in the list share an edge, and the length of \eqref{eq:listgen} is minimal, in the sense that non-consecutive regions in \eqref{eq:listgen} are not adjacent. By assumption, all odd regions
of $H$ appear in \eqref{eq:listgen}.

We can label the edges shared by consecutive elements of \eqref{eq:listgen} so that $a_{i}b_{i}$ is the edge between $f_{i},f_{i+1}$, $1\leq i\leq m$, and moreover,
letting
\[H':=H-a_1b_1-a_2b_2-\dots-a_{m}b_{m},\]
there is a region $r$ in $H'$ containing, among other vertices,
\[a_1,a_2,\dots,a_{m},b_{m},b_{m-1},\dots,b_1\]
in this order (Figure \ref{fig:odret}).
\begin{figure}
	\centering
	\includegraphics[width=7.25cm]{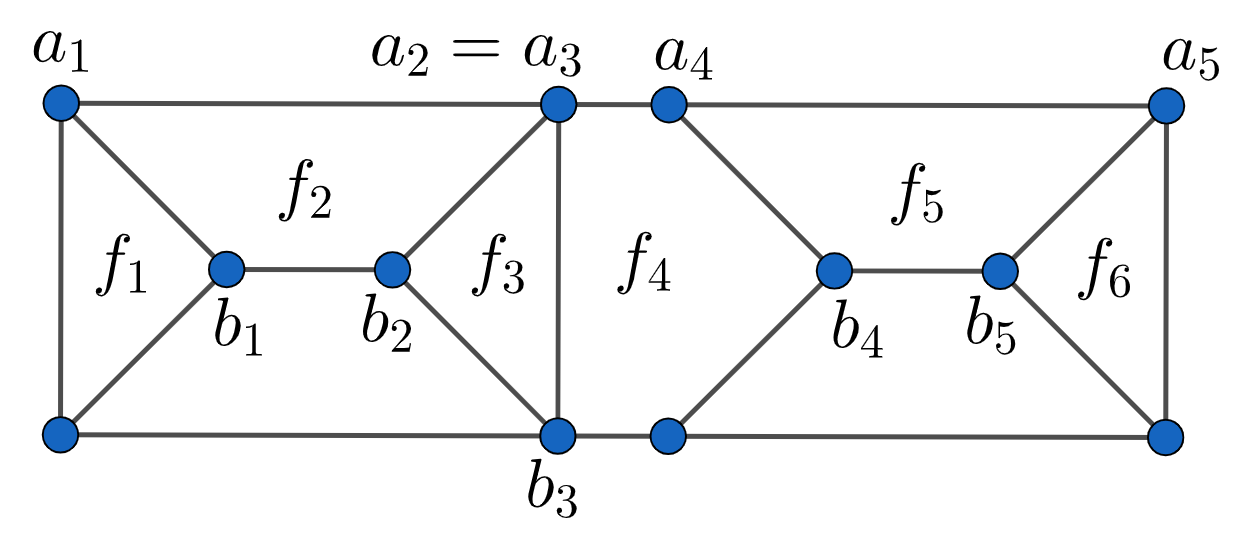}
	\hspace{0.25cm}
	\includegraphics[width=7.25cm]{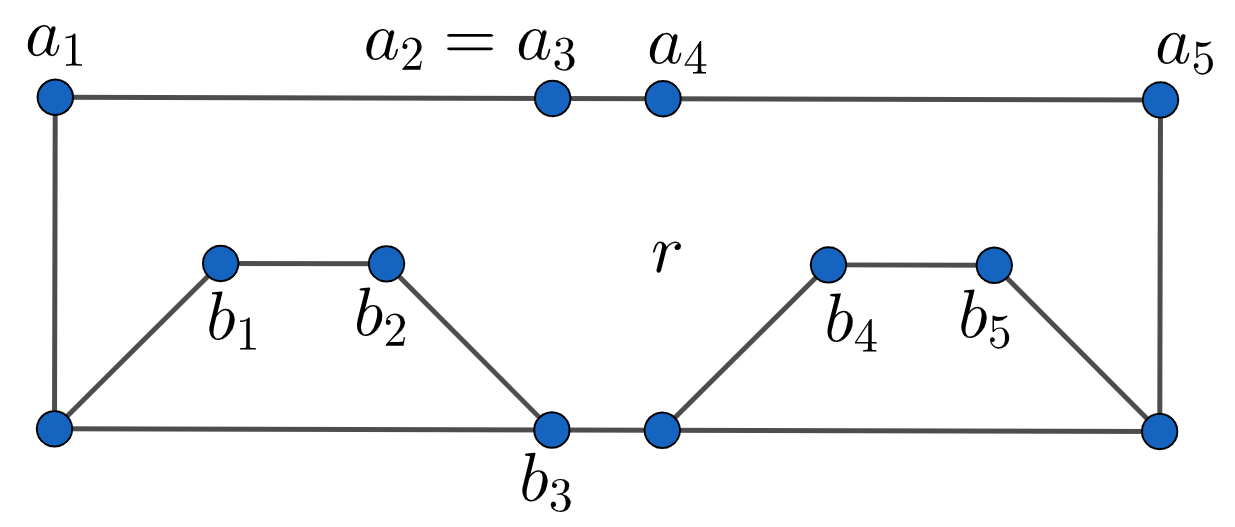}
	\caption{The graph $H$ from Figure \ref{fig:pl2c}, and a possible subgraph $H'$.}
	\label{fig:odret}
\end{figure}

The new graph $H'$ is planar and $2$-connected (cf. the proof of Lemma \ref{lem:2odre}). Moreover, it is bipartite, since all odd regions of $H$ appear in the list \eqref{eq:listgen}. We sketch two copies of $H'$ in the plane, with $r$ the external region in both. We then add the edges
\[(a_i,x)(b_i,y), \quad (a_i,y)(b_i,x), \qquad 1\leq j\leq m.\]
As argued in the proof of Lemma \ref{lem:2odre}, the resulting $H\wedge K_2$ is planar and $3$-connected.

\section{Proof of Theorem \ref{thm:1}, $\Rightarrow$}
\label{sec:thm1n}
\subsection{First part}
Let $H$ and $\calP=H\wedge K_2$ be $3$-polytopes. Recall that $O$ denotes the set of odd faces in $H$. Clearly $|O|$ is even (handshaking lemma) and $|O|\neq 0$, otherwise $H$ would be bipartite, thus $\calP$ would be disconnected due to Lemma \ref{lem:h-v}. If $|O|=2$, then by Lemma \ref{lem:h-v}, the two odd faces of $H$ are disjoint. This is Condition \ref{eq:c1} in Theorem \ref{thm:1}. On the other hand, let us show that, if there are more than two odd faces, any two of them are not disjoint. For the rest of this section, we will be assuming $|O|\geq 4$.

\begin{cor}
	\label{lem:notdisj}
	Let $H$ and $\calP=H\wedge K_2$ be $3$-polytopes, and $O$ the set of odd faces in $H$. If $|O|\geq 4$, then any two odd faces of $H$ are not disjoint.
\end{cor}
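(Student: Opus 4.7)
The plan is to reduce the corollary to Lemma \ref{lem:ll} by contradiction. Suppose for contradiction that $H$ has two disjoint odd faces $f_1, f_2$. Since $H\wedge K_2$ is a $3$-polytope (hence planar) and $H$ is planar with $f_1, f_2$ disjoint odd regions, all the hypotheses of Lemma \ref{lem:ll} are in place — note that Lemma \ref{lem:ll} only requires $H$ planar with $H\wedge K_2$ a $3$-polytope, not the specific connectivity hypothesis of Theorem \ref{thm:2}. So we may apply it directly.

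By Lemma \ref{lem:ll}, every odd region of $H$ save possibly two must contain a $2$-vertex-cut of $H$. Since $|O|\geq 4$, at least $|O|-2\geq 2$ odd faces of $H$ contain $2$-cuts. But $H$ is a $3$-polytope and so is $3$-connected, meaning it has no $2$-vertex-cut at all. This contradiction forces $f_1$ and $f_2$ to share at least one vertex, establishing the claim.

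The argument is essentially a one-line corollary; the only subtle point is verifying that Lemma \ref{lem:ll} was indeed stated for arbitrary planar $H$ with $H\wedge K_2$ a $3$-polytope, rather than being restricted to the connectivity-$2$ setting of Section \ref{sec:thm2}. Since $3$-connectivity implies $2$-connectivity, and its proof only invoked the general fact that $H$ is $2$-connected, the lemma transfers without modification. No new constructions or case analyses (such as reproducing Figures \ref{fig:l01}--\ref{fig:l15}) are needed here.
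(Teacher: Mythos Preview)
Your proof is correct and matches the paper's approach exactly: both apply Lemma~\ref{lem:ll} and then observe that a $3$-connected $H$ has no $2$-cuts, so the conclusion of the lemma forces $|O|\leq 2$ whenever two disjoint odd faces exist. Your added remark that Lemma~\ref{lem:ll} applies to any planar $H$ (not just the connectivity-$2$ case) is accurate and makes the reliance on that lemma explicit.
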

\begin{proof}
We apply Lemma \ref{lem:ll}: since $H$ is $3$-connected, if it contains two disjoint odd faces, then these are the only two odd faces.
\end{proof}

We define the subgraph of $H$ generated by the edges lying on odd faces,
\begin{equation}
	\label{eq:ho}
\ho=\langle\{\text{edges of } H \text{ lying on odd faces}\}\rangle.
\end{equation}
Two examples are shown in Figure \ref{fig:ho12}.
\begin{figure}[h!]
	\centering
	\begin{subfigure}{0.49\textwidth}
		\centering
		\includegraphics[width=3.5cm]{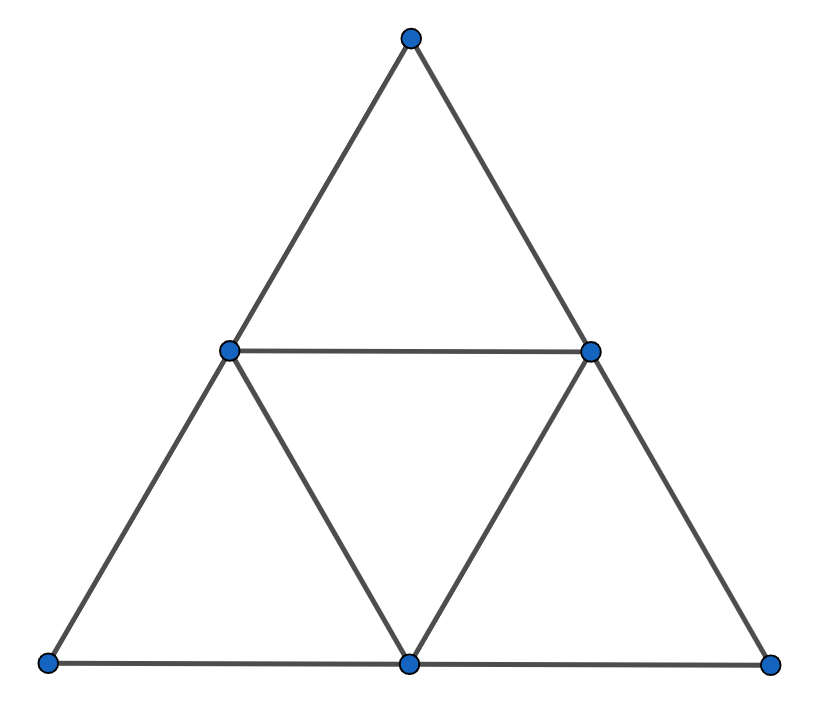}
		\caption{The subgraph $\ho$ of $H$ in Figure \ref{fig:c3}.}
		\label{fig:ho1}
	\end{subfigure}
	\begin{subfigure}{0.49\textwidth}
		\centering
		\includegraphics[width=3.5cm]{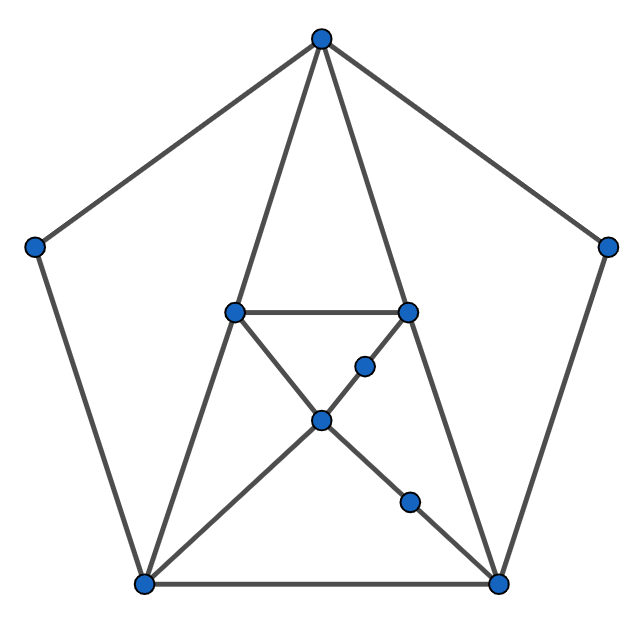}
		\caption{The subgraph $\ho$ of $H$ in Figure \ref{fig:c2}.}
		\label{fig:ho2}
	\end{subfigure}
	\caption{Two examples of $\ho$.}
	\label{fig:ho12}
\end{figure}
 	
Corollary \ref{lem:notdisj} implies that, if $|O|\geq 4$, then $\ho$ is connected. Using this fact, we can now show that it is in fact $2$-connected.
\begin{cor}
	\label{cor:ho}
	Let $H$ and $\calP=H\wedge K_2$ be $3$-polytopes, and $O$ the set of odd faces in $H$. If $|O|\geq 4$, then
	$\ho$ is $2$-connected.
\end{cor}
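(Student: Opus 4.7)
The plan is a proof by contradiction: suppose some vertex $v$ is a cut vertex of $\ho$. We already know from Corollary \ref{lem:notdisj} that $\ho$ is connected, so the only remaining obstruction to 2-connectivity is a cut vertex. The strategy is to use Lemma \ref{lem:h-v} (non-bipartiteness of $H-v$) together with Corollary \ref{lem:notdisj} to force the existence of an odd face of $H$ that \emph{avoids} $v$, and then use that face to bridge across $v$ in $\ho$.

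First I would identify two odd faces of $H$ in distinct components of $\ho-v$. Every edge of $\ho$ lies on some odd face, and an odd face's boundary (with $v$ deleted if $v$ belongs to it) is a connected subgraph of $\ho-v$; therefore each component of $\ho-v$ contains the ``trace'' of at least one odd face. Pick odd faces $f_1,f_2$ of $H$ whose traces lie in distinct components $C_1,C_2$. By Corollary \ref{lem:notdisj} they share a vertex, but any common vertex other than $v$ would lie in both $C_1$ and $C_2$; hence $f_1$ and $f_2$ both pass through $v$.

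Second I would produce an odd face $f_0$ of $H$ with $v \notin f_0$. Since $H$ is $3$-connected, $H-v$ is planar and $2$-connected, and by Lemma \ref{lem:h-v} it is non-bipartite; hence $H-v$ has at least one odd face in its inherited planar embedding. Its faces are exactly the faces of $H$ avoiding $v$, together with a single merged face obtained by deleting $v$ and its incident edges from the faces of $H$ containing $v$. Counting edges, this merged face has length $\sum_{g\ni v}(\ell(g)-2)$, so its parity equals the parity of the number of odd faces of $H$ incident to $v$. If every odd face of $H$ contained $v$, then the handshaking identity $|O|\equiv 0 \pmod 2$ would force the merged face to be even, making \emph{all} faces of $H-v$ even and hence $H-v$ bipartite, contradicting Lemma \ref{lem:h-v}. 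So some odd face $f_0$ avoids $v$.

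Finally I would use $f_0$ to merge $C_1$ and $C_2$. By Corollary \ref{lem:notdisj}, $f_0$ shares a vertex with both $f_1$ and $f_2$; since $v\notin f_0$, these shared vertices differ from $v$, yielding $w_1\in (f_0\cap f_1)\setminus\{v\}$ and $w_2\in(f_0\cap f_2)\setminus\{v\}$. The vertices $w_1,w_2$ lie in $C_1$ and $C_2$ respectively, while the entire cycle bounding $f_0$ lies inside $\ho-v$ (as $v\notin f_0$) and is connected. Hence $w_1$ and $w_2$ are in the same component of $\ho-v$, forcing $C_1=C_2$, a contradiction. The most delicate step is the parity argument producing $f_0$: it leverages the global constraint $|O|\equiv 0\pmod 2$ against Lemma \ref{lem:h-v} applied to $H-v$, rather than any direct structural analysis of the odd faces around $v$.
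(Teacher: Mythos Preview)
Your proof is correct and uses the same two ingredients as the paper (Corollary~\ref{lem:notdisj} and Lemma~\ref{lem:h-v}), just with the contradiction run in reverse: the paper first shows that a cut vertex $u$ of $\ho$ must lie on \emph{every} odd face (via exactly the component argument you give in your final step), and then concludes directly that $H-u$ is bipartite, contradicting Lemma~\ref{lem:h-v}. Your parity computation on the merged face is precisely the contrapositive of that last implication, so the two arguments are essentially identical in content, with the paper's ordering yielding a three-line proof.
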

\begin{proof}
	By contradiction, let $\ho$ be connected but not $2$-connected, and $u$ a separating vertex. By Corollary \ref{lem:notdisj}, all odd faces of $H$ contain $u$. Hence $H-u$ is bipartite, contradicting Lemma \ref{lem:h-v}.
\end{proof}

We can now prove that, if \emph{all} faces of $H$ are odd, then $H$ is simply an odd pyramid. In particular, Condition \ref{eq:c3} in Theorem \ref{thm:1} is met.
\begin{lemma}
Let $H$ and $\calP=H\wedge K_2$ be $3$-polytopes. If all faces of $H$ are odd, then $H$ is an $n$-gonal pyramid for some odd $n\geq 3$.
\end{lemma}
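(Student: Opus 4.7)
The plan is to combine the tools just established — Corollary \ref{lem:notdisj} (any two odd faces meet), Lemma \ref{lem:h-v} (no vertex is on every face, lest $H-v$ be bipartite), the planar bipartite edge bound $q \leq 2p - 2$ on $H \wedge K_2$, and Corollary \ref{cor:4pyr} (an even subdivision of the square pyramid inside $H$ forbids planarity of $H \wedge K_2$).

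First I would dispense with the small case. Handshaking on face sizes forces $F = |O|$ to be even, so $F \geq 4$. If $F = 4$, then Euler's formula together with $\delta(H)\geq 3$ and each face odd of size $\geq 3$ pin down $p = 4$, $q = 6$, and $H = K_4$, the $3$-gonal pyramid.

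For $F \geq 6$, let $v$ be a vertex of $H$ of maximum degree $d$. Lemma \ref{lem:h-v} gives $d \leq F - 1$. The central step, and the principal obstacle, is to show the converse inequality $d = F - 1$, pinpointing an apex. I would argue by contradiction: if every vertex has degree $\leq F - 2$, then at least two faces avoid any chosen max-degree $v$, and Corollary \ref{lem:notdisj} forces these two external faces to share a vertex $u \neq v$. The plan is then to use $3$-connectivity of $H$ together with the odd parity of every face to exhibit a subgraph of $H$ isomorphic to an even subdivision of the square pyramid — the evenness of the constituent paths being forced by the odd parities of the faces they traverse — and to invoke Corollary \ref{cor:4pyr} for the desired contradiction. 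The edge bound $q \leq 2p - 2$ already eliminates the simplest obstruction, the triangular bipyramid (for which $q = 9 > 8 = 2p - 2$); more delicate non-pyramid all-odd configurations require the square-pyramid-subdivision argument, with a case analysis on how the two external faces sit relative to the star of $v$ and on whether they meet in a vertex or in an edge.

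Once $d = F - 1$ is secured, the structural identification is routine. Summing degrees, $2q \geq (F - 1) + 3(p - 1) = F + 3p - 4$ (the apex plus $\delta(H) \geq 3$ elsewhere), which combined with $q \leq 2p - 2$ yields $p \leq F$; and Euler with $q \leq 2p - 2$ yields $F \leq p$. Hence $p = F$, $q = 2p - 2$, and every non-apex vertex is a neighbor of $v$ of degree exactly $3$. In the planar embedding, any face around $v$ with an ``intermediate'' vertex on its boundary (other than the two neighbors of $v$ appearing at $v$) would require an edge from $v$ to that intermediate vertex, and this edge would have to cross the boundary of the face — impossible by planarity. Thus every face around $v$ is a triangle, and the sole remaining face $f_0$ is the cycle on the $F - 1$ neighbors of $v$, an $(F-1)$-gon. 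Since $f_0$ is odd, $n := F - 1$ is odd, and $H$ is the $n$-gonal pyramid with $n \geq 3$, as claimed.
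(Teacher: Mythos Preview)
Your proposal has a genuine gap at the step you yourself flag as ``the principal obstacle'': establishing that the maximum degree equals $F-1$. You set up the contradiction hypothesis correctly --- if $d \leq F-2$ then two faces $g_1, g_2$ avoid the max-degree vertex $v$ and must share some $u$ by Corollary~\ref{lem:notdisj} --- but from there you offer only a plan: ``exhibit a subgraph of $H$ isomorphic to an even subdivision of the square pyramid \dots\ with a case analysis on how the two external faces sit relative to the star of $v$.'' That case analysis is never performed, and it is not obvious it goes through. You need five branch vertices and eight internally disjoint paths of the correct parities, and the interaction between $g_1,g_2$ and the faces around $v$ can be complicated; simply noting that the triangular bipyramid is already killed by $q\leq 2p-2$ does not cover the remaining configurations. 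As written, the central step is a promissory note rather than a proof.

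There is also a slip in your final counting. Combining $2q \geq (F-1)+3(p-1)$ with $q\leq 2p-2$ gives $F\leq p$, not $p\leq F$ as you state; to obtain $p\leq F$ you should instead combine the degree-sum bound with Euler's relation $q=p+F-2$. The rest of your wrap-up (the apex is adjacent to every other vertex, face boundaries are chordless so each face through $v$ is a triangle, hence $H$ is a pyramid) is fine once $p=F$ is correctly established.

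The paper's route sidesteps the max-degree argument altogether by splitting on whether $H$ is a triangulation. If some face $f$ has size $n>3$, then for any four consecutive vertices on $f$ the two odd faces across the first and third edges must meet (Corollary~\ref{lem:notdisj}); running this around $f$ and using planarity forces all faces adjacent to $f$ through a single common vertex --- the apex --- so $H$ is the $n$-gonal pyramid. If $H$ is a triangulation, an even-degree vertex immediately yields an even subdivision of the square pyramid, and an odd degree $\geq 5$ is dispatched by one concrete application of Corollary~\ref{lem:notdisj} that again produces a square pyramid; hence $H=K_4$. This is shorter and avoids the unexecuted case analysis in your plan.
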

\begin{proof}
	We consider two cases. First case, assume that $H$ is not a triangulation, and fix a face
	\[f=[u_1,u_2,\dots,u_n], \quad n>3.\]
	Consider four distinct vertices
	\[u_i,u_{(i+1 \bmod n)},u_{(i+2 \bmod n)},u_{(i+3 \bmod n)}\]
	(consecutive on $f$) and call $f_{i}'$ the face sharing $u_i,u_{(i+1 \bmod n)}$ with $f$, and $f_{i}''$ the face sharing $u_{(i+2 \bmod n)},u_{(i+3 \bmod n)}$ with $f$. By assumption, all faces are odd, so that by Corollary \ref{lem:notdisj}, $f_i',f_i''$ share either a vertex or edge. This arguments holds for every $1\leq i\leq n$, so that by planarity of $H$, all faces
	adjacent to $f$ share a common vertex. That is to say, $H$ is the $n$-gonal pyramid for some odd $n\geq 5$.
	
	Second case, let $H$ be a triangulation. It cannot have any vertex of even degree, otherwise it would contain an even subdivision of the square pyramid, contradicting Corollary \ref{cor:4pyr}. Let $\deg_H(u)=m\geq 5$. Its neighbours form an $m$-cycle
	\[u_1,u_2,\dots,u_m.\]
	By Corollary \ref{lem:notdisj}, the triangular face sharing $u_1u_2$ with the face
	\[[u,u_1,u_2]\]
	has non-empty intersection with both of
	\[[u,u_3,u_4], \qquad [u,u_4,u_5],\]
	so that its third vertex is $u_4$. Therefore, $H$ contains a square pyramid, contradicting Corollary \ref{cor:4pyr}. It follows that each vertex of the triangulation $H$ is of degree $3$, i.e., $H$ is the tetrahedron.
\end{proof}

\subsection{Second part}
So far, we have dealt with the cases when $|O|=2$ and when $H$ contains only odd faces. Henceforth, we will assume that $|O|\geq 4$ and  that $H$ contains at least one even face.
\begin{lemma}
	\label{lem:23}
Let $H$ and $\calP=H\wedge K_2$ be $3$-polytopes, and $O$ the set of odd faces in $H$. If $|O|\geq 4$ and $H$ contains an even face, then either \ref{eq:c2} or \ref{eq:c3} from Theorem \ref{thm:1} holds.
\end{lemma}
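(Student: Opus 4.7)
The plan is to analyze the intersection pattern of the odd faces of $H$, using Lemma \ref{lem:h-v}, Corollary \ref{lem:notdisj}, and Corollary \ref{cor:4pyr}, in the spirit of the proof of Lemma \ref{lem:ll}.

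First I would rule out the case that a single vertex $v^*$ lies on all $|O|$ odd faces. In that situation every face of $H-v^*$ other than the one bounded by the link of $v^*$ is an even face of $H$ (since by hypothesis every odd face of $H$ passes through $v^*$), while the link cycle has length $\sum_{f\ni v^*}|f|-2\deg(v^*)$, which modulo $2$ equals the number of odd faces at $v^*$, namely $|O|$. By the handshaking lemma $|O|$ is even, so the link cycle is even, $H-v^*$ is bipartite, and Lemma \ref{lem:h-v} is violated.

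Next let $v^*$ lie on the maximum number $k$ of odd faces; the previous step gives $k\leq |O|-1$. If $k=|O|-1$, Corollary \ref{lem:notdisj} ensures the unique odd face not through $v^*$ meets every other, so Condition \ref{eq:c3} holds. If $k\leq |O|-2$, the aim is to deduce $|O|=4$ and $k\leq 2$, whence Condition \ref{eq:c2} follows, since two faces of a $3$-polytope meet in at most a vertex or an edge and Corollary \ref{lem:notdisj} forces every pairwise intersection to be non-empty.

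The main obstacle is therefore the case $|O|\geq 6$ with $k\leq |O|-2$. The strategy is to exploit the pairwise intersections provided by Corollary \ref{lem:notdisj}, together with the assumption that no vertex concentrates too many odd faces, to extract from $H$ an even subdivision of the square pyramid (or of one of the graphs analogous to those in Figure \ref{fig:ll}, whose Kronecker product with $K_2$ is non-planar). Corollary \ref{cor:4pyr} and Lemma \ref{lem:subd} then force $H\wedge K_2$ to be non-planar, contradicting the hypothesis. The delicate point is a parity analysis: four suitably chosen odd faces yield four odd cycles pairwise sharing vertices, and along each odd cycle any two vertices can be joined by a path of even length on one of the two sides; one then has to arrange simultaneously that the four spokes and the four base-arcs of the pyramid are all even, which I expect will require a short enumeration of sub-configurations in the spirit of Figure \ref{fig:ll}.
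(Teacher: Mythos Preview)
Your opening moves are sound: ruling out a vertex common to all odd faces via Lemma~\ref{lem:h-v} is essentially what the paper does (phrased there as $2$-connectivity of $\ho$, Corollary~\ref{cor:ho}), and $k=|O|-1$ does yield Condition~\ref{eq:c3} at once. The gap is the case you yourself flag, $|O|\geq 6$ with $k\leq |O|-2$. You propose to extract an even subdivision of the square pyramid from four suitably chosen odd faces, but this is a hope rather than an argument: the parity requirements on four spokes and four base-arcs are not independent, and knowing only that every two odd faces meet while no vertex lies on more than $|O|-2$ of them does not by itself pin down a usable local picture. Your outline also never invokes the hypothesis that $H$ has an even face, which is a warning sign.

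The paper gains the missing control by a different device. Instead of maximising $k$, it takes an \emph{even} region $r$ of the $2$-connected subgraph $\ho$ --- this is precisely where the even-face hypothesis enters --- and shows that $r$ has \emph{exactly three} odd neighbours $f_1,f_2,f_3$: two would force $|O|=2$, while four or more would, via Corollary~\ref{lem:notdisj} and planarity, force a vertex common to all odd faces. The three pairwise intersections yield points $b_i,b_j,b_k$ on the outer boundary of $r\cup f_1\cup f_2\cup f_3$, and any further odd face, having to meet each $f_\ell$, must pass through some subset of $\{b_i,b_j,b_k\}$. The case split is then just on that subset (all three, none, or some), and only one residual configuration --- six odd faces with $b_ib_j,b_ib_k,b_jb_k$ each shared by two of them --- actually requires the square-pyramid argument, isolated as Lemma~\ref{lem:ijk}. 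The local picture around $r$ is what collapses your open-ended enumeration to three cases.
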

\begin{proof}
Recalling \eqref{eq:ho} and Corollary \ref{cor:ho}, $\ho$ is the planar, $2$-connected subgraph of $H$ generated by edges lying on odd faces. Let
\[r=[a_1,a_2,\dots,a_n]\]
be an even region of $\ho$. By definition of $\ho$, $r$ is adjacent in $\ho$ only to odd regions.

We claim that $r$ is adjacent to exactly three odd regions in $\ho$. Firstly, it is clearly adjacent to at least two distinct odd regions. If exactly two, say $r_1,r_2$, then the two corresponding odd faces of $H$ share two vertices $a_i,a_j$ with $1\leq i<j\leq n$. By $3$-connectivity of $H$, we deduce that $r_1,r_2$ are adjacent. In turn, this means that either $\ho$ is not $2$-connected, or there are exactly two odd faces in $H$. We have reached a contradiction. Now assume that $r$ is adjacent to four or more odd regions in $\ho$. By Corollary \ref{lem:notdisj}, no two odd regions are disjoint. Therefore, by planarity, all odd regions adjacent to $r$ share a vertex. Since $\ho$ is $2$-connected, this vertex lies on all odd faces of $H$, contradicting Lemma \ref{lem:h-v}. We have succeeded in proving that $r$ is adjacent to exactly three odd regions in $\ho$. We will label these $f_1,f_2,f_3$. We write
\[V(f_1\cap f_2)=\{a_i,b_i\}, \quad V(f_1\cap f_3)=\{a_j,b_j\}, \quad V(f_2\cap f_3)=\{a_k,b_k\},\]
where $a_i,a_j,a_k$ are three distinct vertices on $r$ as defined above, and possibly any number of $a_i=b_i$, $a_j=b_j$, $a_k=b_k$ hold (Figure \ref{fig:hogen}).
\begin{figure}[h!]
	\centering
	\includegraphics[width=6.5cm]{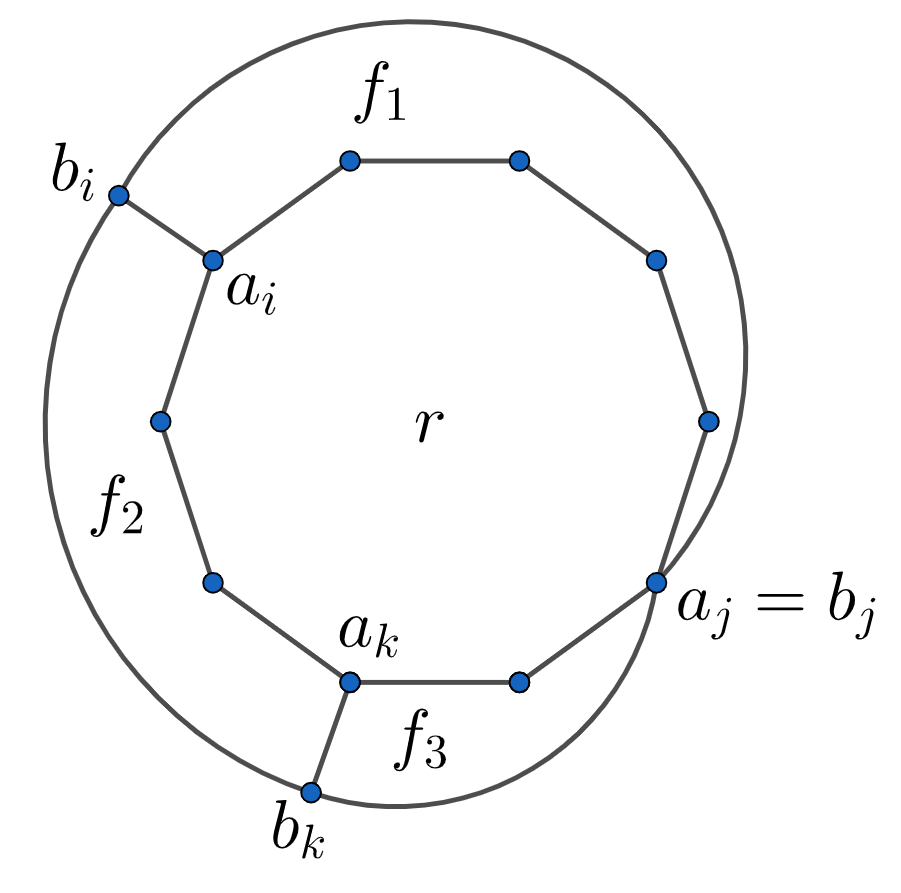}
	\caption{Illustration of the construction in Lemma \ref{lem:23}.}
\label{fig:hogen}
\end{figure}

By Corollary \ref{lem:notdisj}, all other odd regions of $\ho$ intersect each of $f_1,f_2,f_3$ non-trivially. We distinguish between three mutually exclusive scenarios for the odd faces of $H$.
\begin{itemize}
	\item 
	There exists in $H$ an odd face containing all of $b_i,b_j,b_k$. Hence $b_ib_j,b_ib_k,b_jb_k\in E(H)$, so that $f_4$ is a triangle. Hence there are exactly four odd faces in total, namely $f_4$ and the three faces adjacent to it. Condition \ref{eq:c3} in Theorem \ref{thm:1} is satisfied.
	\item 
	There exists an odd face $f_4$ in $H$ containing none of $b_i,b_j,b_k$. By planarity, $f_1,f_2,f_3,f_4$ are the only odd faces. Each pair of odd faces has non-empty intersection, and each triple of odd faces has empty intersection. This is Condition \ref{eq:c2} in Theorem \ref{thm:1}.
	\item
	The remaining case is, each odd face of $H$ contains one or two of $b_i,b_j,b_k$. By planarity of $H$, Condition \ref{eq:c3} in Theorem \ref{thm:1} is met unless there exist odd faces $f_4,f_5,f_6$ (distinct from  $f_1,f_2,f_3$) not containing, respectively, $b_k,b_j,b_i$. By planarity, this is possible only if $f_1,f_4$ share the edge $b_ib_j$, $f_2,f_5$ share $b_ib_k$, and $f_3,f_6$ share $b_jb_k$. The $\Rightarrow$ statement of Theorem \ref{thm:1} will be completely proven once we exclude this possibility in the following Lemma \ref{lem:ijk}.
\end{itemize}
\end{proof}

\begin{lemma}
	\label{lem:ijk}
Let $H$ be a $3$-polytope, $1\leq i<j<k$ integers, and $b_ib_j$, $b_ib_k$, $b_jb_k$ edges that are each shared between two odd faces. Then $H\wedge K_2$ is non-planar.
\end{lemma}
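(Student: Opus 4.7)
\textbf{Proof plan for Lemma \ref{lem:ijk}.}

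The plan is to exhibit a non-planar Kuratowski subgraph inside $H\wedge K_2$ arising from the six odd faces clustered around the triangle $b_ib_jb_k$. Because $H$ is a $3$-polytope and the triangle is not a face of $H$ under the hypothesis (a triangle that is a face falls into Condition \ref{eq:c3} of Theorem \ref{thm:1}, not the scenario of Lemma \ref{lem:ijk}), the triangle separates the plane into two regions, with the six odd faces $f_1,f_2,f_3$ on one side and $f_4,f_5,f_6$ on the other, arranged so that each triangle edge is incident to one odd face on each side.

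First I would record the key dividend of the hypothesis: for each of the three triangle edges, both incident faces are odd, so removing the direct edge from each face exhibits an \emph{even} path of length $\ge 2$ between the two endpoints. Thus between each ordered pair $(b_l,b_m)$, $l\ne m$, we have the direct edge (odd, length $1$) \emph{together with} two even parallel paths (from the two odd faces through $b_lb_m$), forming a ``doubled theta graph'' structure at each triangle edge.

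Second, I would lift this structure to $H\wedge K_2$. The triangle, being an odd $3$-cycle, lifts to a $6$-cycle on the branch vertices $\{b_l^x,b_l^y:l=i,j,k\}$, giving all six ``cross-colour'' edges $b_l^x\,b_m^y$ for $l\ne m$. Each even parallel path lifts to two internally disjoint paths in $H\wedge K_2$ connecting same-colour copies, i.e.\ between $b_l^x$ and $b_m^x$, and between $b_l^y$ and $b_m^y$. The resulting subgraph on the six branch vertices, after contracting each lifted path to a single edge, is seen to be the octahedron $K_{2,2,2}$, with the pairs $\{b_l^x,b_l^y\}$ as its three non-adjacent pairs.

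Third (the main technical step), I would upgrade this octahedron-shaped skeleton to an honest $K_{3,3}$-subdivision in $H\wedge K_2$. Take the bipartition $\{b_i^x,b_j^x,b_k^x\}\sqcup\{b_i^y,b_j^y,b_k^y\}$: six of the nine $K_{3,3}$-edges are given by the triangle lift, and the remaining three are ``same-index'' paths $b_l^x\to b_l^y$, which correspond to odd closed walks at $b_l$ in $H$. At each $b_l$ four odd faces meet, giving four odd cycles through $b_l$; I would use a counting/disjointness analysis among the internal vertices of the lifts of these cycles to select, one per $l\in\{i,j,k\}$, an odd closed walk whose Kronecker lift avoids the other branch vertices. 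If a direct subdivision is blocked (by the fact that the naive odd cycles through $b_l$ pass through $b_m$ for some $m\ne l$), I would pass to a $K_{3,3}$-\emph{minor} by enlarging each ``same-index'' branch set to absorb the necessary detour vertices (supplied by the internal vertices of $f_1,\dots,f_6$ and by the additional neighbours forced by $\delta(H)\ge 3$ and $3$-connectivity).

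The hardest step, by a clear margin, is the disjointness argument in the last paragraph: showing that the three required paths (or enlarged branch sets) can genuinely be taken internally disjoint in $H\wedge K_2$, using only the structural information given by the hypothesis (six odd faces around the triangle, $H$ is a $3$-polytope). As a fallback, one can instead exhibit an even subdivision of the square pyramid in $H$ with apex $b_i$ (using the fact that four odd faces meet at $b_i$), and then invoke Corollary \ref{cor:4pyr} together with Lemma \ref{lem:subd}. In either case, Kuratowski's theorem yields that $H\wedge K_2$ is non-planar, completing the proof.
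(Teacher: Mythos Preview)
Your fallback is exactly the route the paper takes: exhibit an even subdivision of the square pyramid in $H$ (with apex $b_i$) and invoke Corollary~\ref{cor:4pyr}. So at the level of strategy you have found the paper's proof; the difference is that the paper treats this as the \emph{only} argument and carries it out in full, whereas you relegate it to a one-line contingency.

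Two remarks on what is missing.

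\textbf{The primary approach is not complete.} Your octahedron-shaped skeleton in $H\wedge K_2$ is correct, but $K_{2,2,2}$ is planar, so everything rests on the three ``same-index'' connections $b_l^x\!-\!b_l^y$. Here there is a real obstruction that you flag but do not overcome: every odd face through $b_i$ contains one of $b_j,b_k$ (they are exactly the four faces through the edges $b_ib_j$ and $b_ib_k$), so any odd cycle through $b_i$ obtained from the face structure passes through a branch vertex. Your proposed remedy, ``pass to a $K_{3,3}$-minor by enlarging branch sets'', is not an argument; one would have to exhibit the six connected, pairwise disjoint branch sets and the nine crossing edges, and it is not clear this can be done without essentially reproducing the pyramid construction below.

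\textbf{The fallback needs the paper's details.} ``Four odd faces meet at $b_i$'' is not by itself enough to produce an even subdivision of the square pyramid: one needs four neighbours of $b_i$ together with internally disjoint \emph{odd} paths forming the base $4$-cycle. The paper first proves (using Corollary~\ref{lem:notdisj} and $3$-connectivity) that $H$ has \emph{exactly six} odd faces, then works with the even region $r$ of $\ho$ from Lemma~\ref{lem:23}. The apex is $b_i$, the base vertices are $a_{i'},b_j,c_4,b_k$ (here $a_{i'}\in r$ is adjacent to $b_i$, and $c_4$ is the neighbour of $b_i$ on $f_4$ opposite to $b_j$), and the four odd base paths are built from arcs of $f_1,f_4,f_5$ together with an even $a_{i'}a_{k'}$-path along $r$ and, when $c_4\neq c_5$, an even $c_4c_5$-path obtained by $3$-connectivity. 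None of these choices is visible from the bare statement ``four odd faces meet at $b_i$''.
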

\begin{proof}
We will prove that $H$ contains an even subdivision of the square pyramid, to apply Corollary \ref{cor:4pyr}. We claim that there are exactly six odd faces in $H$, namely, the faces containing one of $b_ib_j$, $b_ib_k$, $b_jb_k$. Indeed, by Corollary \ref{lem:notdisj}, another odd face $f_7$ of $H$ would have non-empty intersection with each of these six. By planarity of $H$, $f_7$ would contain at least two of $b_i,b_j,b_k$. This would contradict the $3$-connectivity of $H$.

Similarly to Lemma \ref{lem:23}, we can find an even region of $\ho$
\[r=[a_1,a_2,\dots,a_n], \quad n\geq k,\]
such that
\[\text{dist}_H(a_i,b_i), \ \text{dist}_H(a_j,b_j), \ \text{dist}_H(a_k,b_k)\leq 1.\]
That is to say, there exist $a_{i'},a_{j'},a_{k'}\in r$ adjacent to $b_i,b_j,b_k$ respectively, and satisfying
\[|i'-i|, |j'-j|, |k'-k|\leq 1.\]
Up to relabelling, we can assume that $|i'-k'|\geq 2$, since $n\geq 4$, and write
\begin{align*}
f_1&=[b_i,b_j,\dots,a_{i'}], \qquad f_2=[b_k,b_i,\dots,a_{k'}],\\ f_4&=[b_i,b_j,\dots,c_4], \qquad
f_5=[b_i,b_k,\dots,c_5],\\
\end{align*}
where possibly $c_4=c_5$ (Figure \ref{fig:ijk}).
\begin{figure}[h!]
	\centering
	\includegraphics[width=6.5cm]{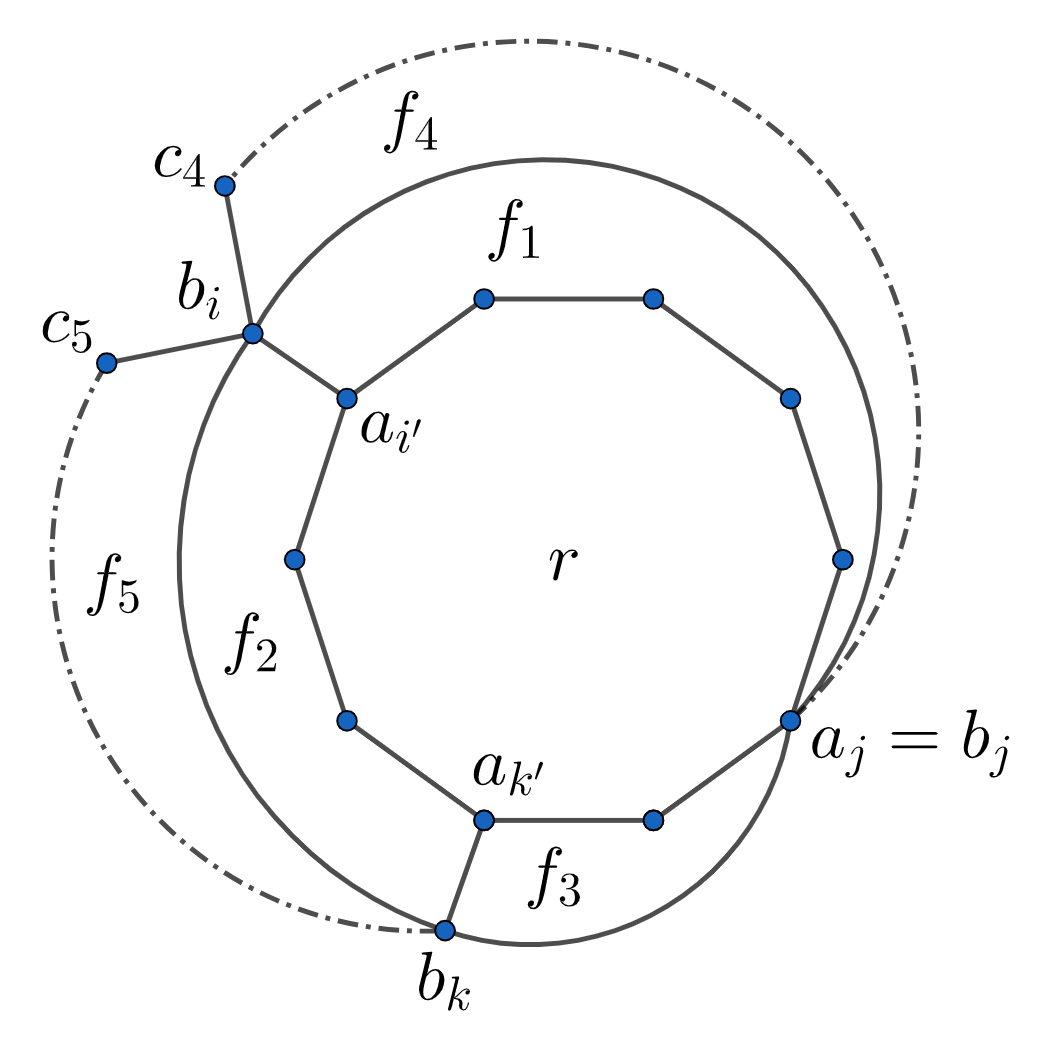}
	\caption{Construction of Lemma \ref{lem:ijk}. The dash-dotted lines represent (even) paths on at least two vertices.}
	\label{fig:ijk}
\end{figure}

By construction, $b_i$ is adjacent to each of
\[a_{i'},b_j,c_4,b_k.\]
There is an odd $a_{i'}b_j$-path $\psi_1$ along $f_1$, and an odd $b_jc_4$-path $\psi_2$ along $f_4$. By $3$-connectivity of $H$, there is an even $a_{i'}a_{k'}$-path not containing any other vertex of $f_1$. Since $a_{k'}$ and $b_k$ are adjacent, we hence find an odd $a_{i'}b_k$-path $\psi_3$ that is internally disjoint from $\psi_1$, $\psi_2$. Now there are two cases. If $c_4=c_5$, we simply have an odd $c_4b_k$-path $\psi_4$ along $f_5$, hence this path is internally disjoint from $\psi_1$, $\psi_2$, $\psi_3$. We have been successful in finding an even subdivision of the square pyramid within $H$.

Now assume instead that $c_4\neq c_5$. Recall that there are exactly six odd faces in $H$. By $3$-connectivity there is an even $c_4c_5$-path $\psi_4'$ not containing any other vertices of $f_4$ or $f_5$. In any case, $H$ contains an even subdivision of the square pyramid.
\end{proof}

\section{Proof of Theorem \ref{thm:1}, $\Leftarrow$}
\label{sec:thm1s}
We need to show that if a $3$-polytope $H$ satisfies any of the three conditions listed in Theorem \ref{thm:1}, then $H\wedge K_2$ is also a $3$-polytope. For Condition \ref{eq:c1}, we simply recall Lemma \ref{lem:2odre}.

Let us show that Condition \ref{eq:c2} of Theorem \ref{thm:1} is sufficient for $H\wedge K_2$ to be a $3$-polytope.
\begin{lemma}
	\label{lem:c2}
	Let $H$ be a $3$-polytope and $O$ the set of its odd faces, satisfying $|O|=4$. Suppose that any two elements of $O$ intersect in a vertex or edge, while any three elements of $O$ have empty intersection. Then $\calP=H\wedge K_2$ is a $3$-polytope.
\end{lemma}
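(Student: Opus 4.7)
The strategy follows the template of Section \ref{sec:thm2s}: construct a planar, bipartite, $2$-connected spanning subgraph $H' \subseteq H$ by deleting a short list of edges $a_1b_1, \dots, a_mb_m$, verify that the endpoints lie on a common face $r^*$ of $H'$ in the cyclic order of \eqref{eq:ord1} or \eqref{eq:ord2}, and then invoke Theorem \ref{thm:4} to conclude that $\calP = H \wedge K_2$ is a $3$-polytope.

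The first task is to identify the edges to delete. By condition \ref{eq:c2} and the analysis in the proof of Lemma \ref{lem:23}, the four odd faces $f_1,f_2,f_3,f_4$ of $H$ admit the following arrangement: an even region $r_0$ of $\ho$ is adjacent to three of them, say $f_1,f_2,f_3$, which meet pairwise at vertices or edges on $r_0$; the fourth odd face $f_4$ is disjoint from $V(r_0)$ and meets each of $f_1,f_2,f_3$ at distinct vertices or edges on the ``opposite side''. I then choose $m=3$ edges of $H$ --- one separating each adjacent pair in a spanning tree of the four odd faces in the face-adjacency graph of $H$, replacing any vertex-only intersection by a short chain through a neighboring even face --- whose deletion merges $f_1, f_2, f_3, f_4$ (together with at most a few intermediate even faces) into a single region $r^*$ of $H'$. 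A parity check --- the length of $r^*$ equals the sum of the four (odd) face lengths plus the lengths of any (even) intermediate faces, minus $2m$ --- shows that $r^*$ is even, so $H'$ is planar and bipartite.

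Next, I claim that the endpoints of the deleted edges lie on the boundary of $r^*$ in the cyclic order \eqref{eq:ord1}. This follows from the planar embedding: the three deleted edges form a ``nested cut'' separating $f_4$ from the $\{f_1,f_2,f_3\}$-side of $r_0$, so the endpoints split into two arcs $a_1,\dots,a_m$ and $b_m,\dots,b_1$ along $\partial r^*$. Moreover, since $H$ is $3$-connected, $H'$ remains $2$-connected, and any $2$-cut $\{a,b\}$ of $H'$ must be broken by at least one deleted edge $a_ib_i$ --- otherwise $\{a,b\}$ would already cut $H$ --- which forces $\{a,b\}$ to lie on $\partial r^*$ in a position permitted by Theorem \ref{thm:4}. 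Applying Theorem \ref{thm:4} then yields that $H \wedge K_2$ is a $3$-polytope.

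The main obstacle is the case analysis on the six pairwise intersections $V(f_\alpha \cap f_\beta)$, each of which can be a single vertex or a shared edge: in the vertex-only sub-cases one must insert an intermediate even face into the merging chain, and the cyclic order of endpoints on $\partial r^*$ must be re-verified in each configuration. A secondary subtlety is confirming that $H'$ does not develop any ``forbidden'' $2$-cut --- one lying between $b_m$ and $a_1$, or between $a_m$ and $b_1$, along $\partial r^*$ --- which is precisely where the $3$-connectivity of $H$ comes into play.
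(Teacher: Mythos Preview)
Your plan has a structural gap. The four odd faces have \emph{six} pairwise intersections (each a vertex or an edge), but your spanning-tree deletion removes only three of them. Once $f_1,f_2,f_3,f_4$ are merged into the single region $r^*$, each of the three surviving intersections sits with $r^*$ on both sides: if $f_\alpha\cap f_\beta$ is an edge $e$, then in $H'$ the edge $e$ has the same face on either side and the boundary walk of $r^*$ traverses $e$ twice; if $f_\alpha\cap f_\beta$ is a single vertex $v$, then the boundary walk of $r^*$ passes through $v$ twice. Either way $r^*$ is not bounded by a simple cycle, so $H'$ is neither $3$-connected nor semi-hyper-$2$-connected, and Theorem~\ref{thm:4} does not apply. (A secondary issue: for a path-shaped tree $f_1\text{--}f_2\text{--}f_3\text{--}f_4$, reinserting the middle deleted edge yields two disjoint even merged faces $f_1\cup f_2$ and $f_3\cup f_4$, so $H'+a_2b_2$ is bipartite --- again contrary to a hypothesis of Theorem~\ref{thm:4}.)

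The paper takes a shorter, different route: it pairs the odd faces as $\{f_1,f_2\}$ and $\{f_3,f_4\}$ and makes only \emph{two} deletions, removing for each pair the shared edge if there is one, and otherwise the shared \emph{vertex}. This gives three cases (edge/edge, edge/vertex, vertex/vertex). In the latter two $H'$ is not a spanning subgraph, so Theorem~\ref{thm:4} is never invoked; instead the planar, $3$-connected embedding of $H\wedge K_2$ is exhibited directly from two copies of $H'$, the cross-edges, and (where a vertex $v$ was deleted) the reinserted stars at $(v,x)$ and $(v,y)$. The key geometric step --- replacing your nested-cut claim --- is that after only two deletions one can re-embed $H'$ so that all relevant endpoints lie on a single region in the correct cyclic order; with four mutually intersecting faces merged into one, as in your scheme, no such clean boundary exists.
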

\begin{proof}
Recall that $V(K_2)=\{x,y\}$. Let $f_1,f_2,f_3,f_4$ be the odd faces of $H$. We distinguish between three cases.

{\bf First case.} The faces $f_1,f_2$ share an edge $ab$ and the faces $f_3,f_4$ share an edge $cd$, as in Figure \ref{fig:abcd1}. We define the planar, $2$-connected, bipartite graph
\[H'=H-ab-cd.\]
We write
\[H\wedge K_2=H'\dot\cup H'+(a,x)(b,y)+(a,y)(b,x)+(c,x)(d,y)+(c,y)(d,x).\]
By construction, this graph is $3$-connected. It now suffices to point out that there is an immersion of $H'$ in the plane such that $a,c,b,d$ appear in this order along the boundary of a region $r$ -- Figure \ref{fig:abcd2}. It remains to sketch two copies of $H'$, with $r$ the external region in both, to see that $H\wedge K_2$ is planar.
\begin{figure}[h!]
	\centering
	\begin{subfigure}{0.45\textwidth}
		\centering
		\includegraphics[width=5.5cm]{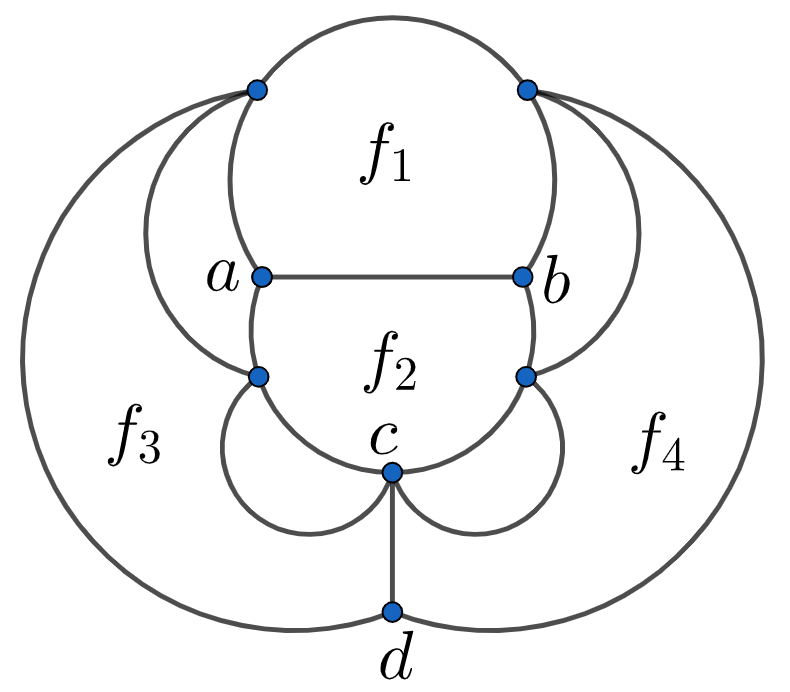}
		\caption{In $H$, the odd faces $f_1,f_2$ share an edge $ab$ and the odd faces $f_3,f_4$ share an edge $cd$.}
		\label{fig:abcd1}
	\end{subfigure}
\hspace{0.75cm}
	\begin{subfigure}{0.45\textwidth}
		\centering
		\includegraphics[width=5.5cm]{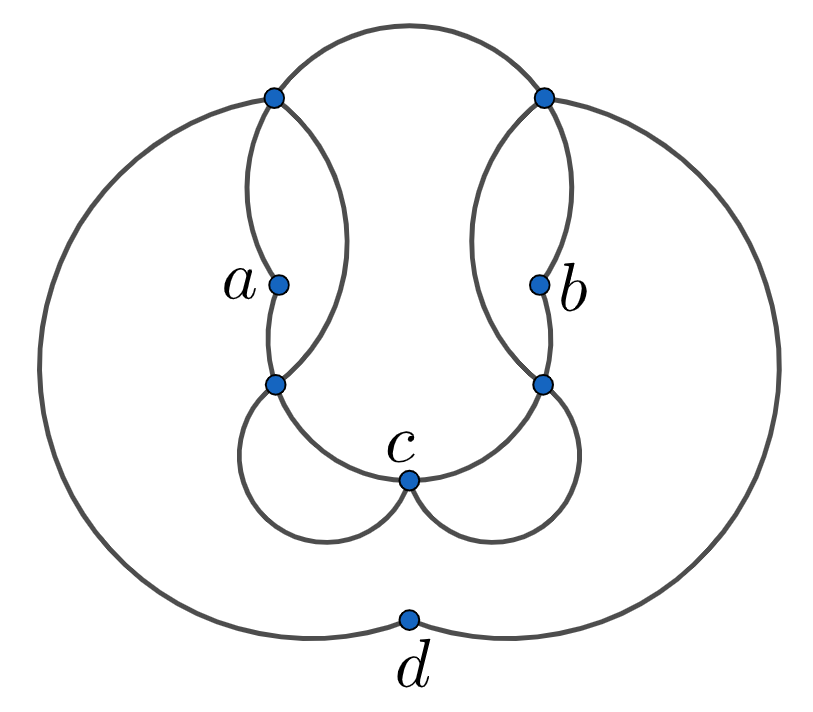}
		\caption{After deleting $ab$ and $cd$, there exists a planar immersion of the resulting graph such that $a,c,b,d$ lie on a region in this order.}
		\label{fig:abcd2}
	\end{subfigure}
	\caption{Lemma \ref{lem:c2}, First case.}
	\label{fig:abcd}
\end{figure}

{\bf Second case.}
The faces $f_1,f_2$ share an edge $ab$ and the faces $f_3,f_4$ share only the vertex $v$. The graph
\[H'=H-ab-v\]
is $2$-connected (as well as planar and bipartite), since $ab$ and $v$ do not lie on the same face of $H$ by assumption. Let
\[v_1,v_2,\dots,v_n, \quad n\geq 4,\]
be the neighbours of $v$ in $H$, labelled so that they appear in this order along the boundary of a region $r$ in $H'$, and so that
\[v_1,v_n\in f_3, \quad v_i,v_{i+1}\in f_4, \qquad \text{ for some fixed }2\leq i\leq n-2\]
(Figure \ref{fig:abu1}). Similarly to the previous case, we may immerse $H'$ in the plane so that $a,b$ also belong to $r$. To be precise,
\begin{equation}
	\label{eq:abu}
a,v_1,v_2,\dots,v_i,b,v_{i+1},v_{i+2},\dots,v_n
\end{equation}
lie, in this order, around the boundary of $r$, as in Figure \ref{fig:abu2}.
\begin{figure}[h!]
	\centering
	\begin{subfigure}{0.45\textwidth}
		\centering
		\includegraphics[width=5.5cm]{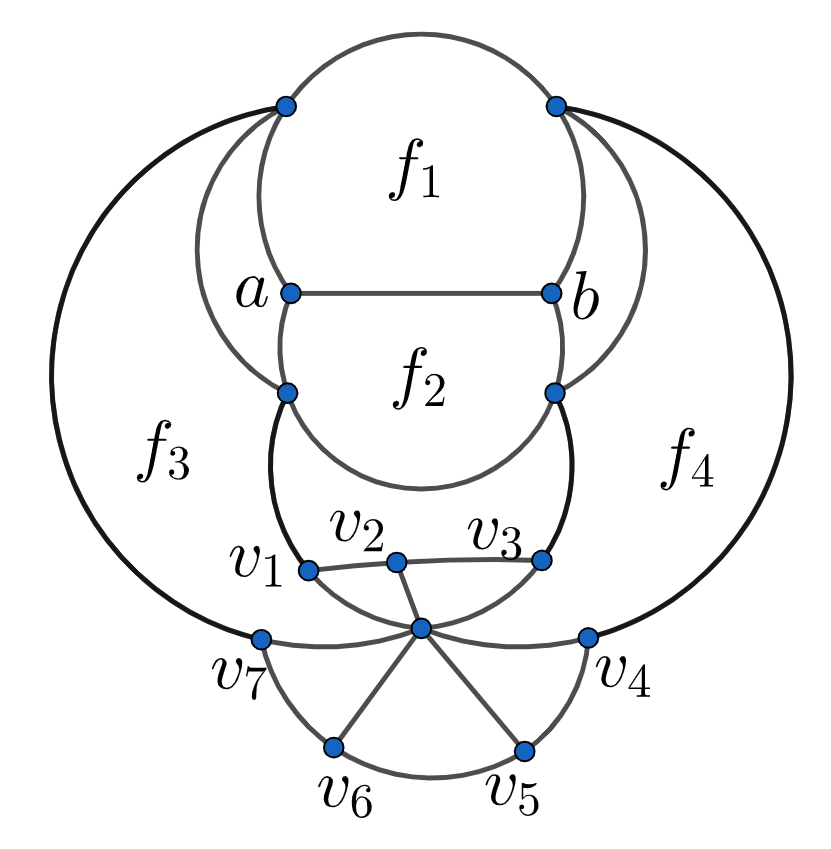}
		\caption{In $H$, the odd faces $f_1,f_2$ share an edge $ab$ and the odd faces $f_3,f_4$ share the vertex $v$.}
		\label{fig:abu1}
	\end{subfigure}
\hspace{0.75cm}
	\begin{subfigure}{0.45\textwidth}
		\centering
		\includegraphics[width=5.5cm]{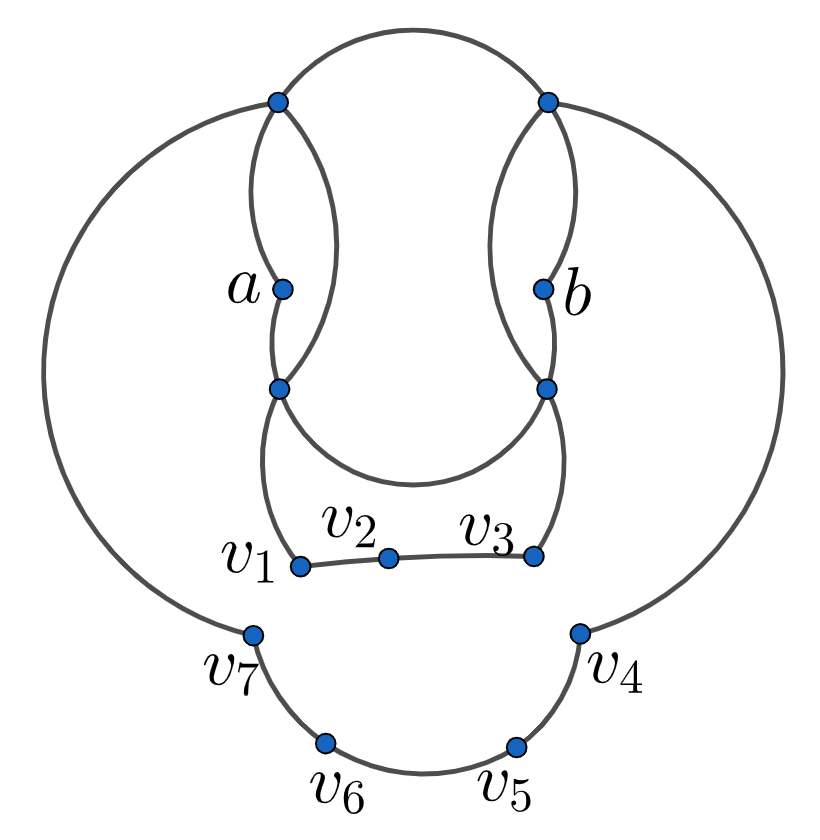}
		\caption{After deleting $ab$ and $v$, there exists a planar immersion of the resulting graph such that \eqref{eq:abu} lie on a region in this order (here $n=7$ and $i=3$).}
		\label{fig:abu2}
	\end{subfigure}
	\caption{Lemma \ref{lem:c2}, Second case.}
	\label{fig:abu}
\end{figure}

We note that, in a $2$-colouring of $V(H')$, $v_1$ and $v_n$ have opposite colours, since in $H'$ there exists a $v_1v_n$-path of odd length, formed by all the edges of $f_3$ save $vv_1$ and $vv_n$. Via similar considerations, we see that $v_1,v_2,\dots,v_i$ are of one colour and $v_{i+1},v_{i+2},\dots,v_n$ of the other, and moreover $a,b$ are of the same colour.

To draw $H\wedge K_2$, we start with two copies of $H'$, with labelling such that, in one copy, w.l.o.g.
\[ax,v_1x,v_2x,\dots,v_ix,bx,v_{i+1}y,v_{i+2}y,\dots,v_ny\]
appear in this order around the contour of $r$, and in the other copy,
\[ay,v_1y,v_2y,\dots,v_iy,by,v_{i+1}x,v_{i+2}x,\dots,v_nx\]
lie in this order around the boundary of $r$. To pass to $H\wedge K_2$, we add the edges
\[(a,x)(b,y), \ (a,y)(b,x), \ (v,x)(v_i,y), \ (v,y)(v_i,x), \qquad 1\leq i\leq n.\]
The resulting graph is a $3$-polytope.

{\bf Third case.}
The faces $f_1,f_2$ share only the vertex $u$ and the faces $f_3,f_4$ share only the vertex $v$. With similar ideas to the previous cases,
\[H'=H-u-v\]
is planar, $2$-connected and bipartite. The neighbours of $u$
\[u_1,u_2,\dots,u_m, \quad m\geq 4,\]
and those of $v$
\[v_1,v_2,\dots,v_n, \quad n\geq 4\]
may be labelled so that, in a $2$-colouring of $V(H)$,
\[u_1,u_2,\dots,u_i,v_1,v_2,\dots,v_j\]
are of the same colour, and
\[u_{i+1},u_{i+2},\dots,u_m,v_{j+1},v_{j+2},\dots,v_n\]
of the other colour, for some fixed $2\leq i\leq m-2$ and $2\leq j\leq n-2$. We sketch two copies of $H'$, where in one copy the external region contains
\[u_1x,u_2x,\dots,u_ix,v_1x,v_2x,\dots,v_jx,u_{i+1}y,u_{i+2}y,\dots,u_my,v_{j+1}y,v_{j+2}y,\dots,v_ny\]
in this order, and in the other copy the external region contains
\[u_1y,u_2y,\dots,u_iy,v_1y,v_2y,\dots,v_jy,u_{i+1}x,u_{i+2}x,\dots,u_mx,v_{j+1}x,v_{j+2}x,\dots,v_nx\]
in this order. Thus we check that $H\wedge K_2$ is a $3$-polytope.
\end{proof}

It remains to prove that Condition \ref{eq:c3} of Theorem \ref{thm:1} is sufficient for $H\wedge K_2$ to be a $3$-polytope.
\begin{lemma}
	Let $H$ be a $3$-polytope with at least four odd faces. If all odd faces except one share a common vertex $u$, and the remaining odd face $f$ has non-empty intersection with all other odd faces, then $\calP=H\wedge K_2$ is a $3$-polytope.
\end{lemma}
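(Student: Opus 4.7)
Following the style of Lemmas \ref{lem:2odre} and \ref{lem:c2}, I would construct $\calP = H \wedge K_2$ explicitly as a planar, $3$-connected graph. The strategy is to delete the apex $u$ and one carefully chosen edge $e$ of $f$ from $H$ to form a planar bipartite subgraph $H^\circ$, sketch two copies of $H^\circ$ in the plane, add cross edges corresponding to $e$ so as to realise $(H - u) \wedge K_2$ planarly, and finally insert the two hub vertices $(u, x)$ and $(u, y)$ with pendant edges to their appropriate neighbours.

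\textbf{Construction.} Let $u_1, \dots, u_m$ be the cyclic neighbours of $u$, with $m \geq 3$, and $f_1, \dots, f_m$ the faces at $u$. The graph $H - u$ is planar and $2$-connected; a face-parity count shows that fusing $f_1, \dots, f_m$ produces a single face with perimeter parity $|O| - 1 \equiv 1 \pmod 2$, so this merged face together with $f$ constitute the only two odd faces of $H - u$. Bipartising then amounts to deleting an edge $e$ shared by these two faces in $H - u$---equivalently, a non-$u$-edge of some face at $u$ that also lies on $f$---so I would set $H^\circ := H - u - e$. In the planar embedding of $H^\circ$, the region $r$ obtained by the further merging has its boundary cycle passing through $u_1, \dots, u_m$ and the endpoints $a, b$ of $e$ in an order inherited from the embedding of $H$. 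I would then sketch two copies of $H^\circ$ in the plane with $r$ as common external region, with labels $x, y$ chosen consistently with a $2$-colouring (compare Case 3 of Lemma \ref{lem:c2}), add the cross edges $(a, x)(b, y)$ and $(a, y)(b, x)$, and finally insert $(u, x)$ and $(u, y)$ in the two regions complementary to this combined embedding, joining them to the appropriate $(u_i, y)$ respectively $(u_i, x)$ vertices. Planarity is visible from the cyclic-order matching, and $3$-connectivity follows from $m \geq 3$ and the presence of the cross edges via a $2$-cut analysis analogous to Lemma \ref{lem:c2}.

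\textbf{Main obstacle.} The decisive technical step is showing that the required edge $e$ exists under Condition \ref{eq:c3}. If $f$ shared only vertices (never edges) with every face at $u$, I would work in the dual graph $H^*$: the vertex $f^*$ would lie on at least three faces of $H^*$ each containing a distinct odd $f_i^*$, while being non-adjacent in $H^*$ to the cycle $f_1 f_2 \dots f_m$ bounding the face of $H^*$ corresponding to $u$. Combining the planarity and $3$-connectivity of $H^*$ with these constraints should force an even subdivision of the square pyramid inside $H$, contradicting Corollary \ref{cor:4pyr}. A secondary subtlety is that $H^\circ$ need not itself be $2$-connected---for instance, in the pentagonal-pyramid case $H - u = C_5$ and $H^\circ$ is a path---but the subsequent assembly still produces a $3$-connected $\calP$: in that case $(H - u) \wedge K_2$ becomes an even cycle, and attaching the two hubs reproduces the pseudo-double-wheel described in the introduction. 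Unifying the generic and degenerate cases into a single bookkeeping of the cyclic order on $r$ is what I expect to occupy the bulk of the technical work.
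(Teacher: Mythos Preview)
Your outline diverges from the paper's argument, and the divergence conceals a real gap rather than the one you flag.

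You propose $H^\circ=H-u-e$, sketch two copies, add the two cross edges for $e$ to obtain a planar drawing of $(H-u)\wedge K_2$, and then insert the hubs $(u,x)$, $(u,y)$. The difficulty is this last step. The hub $(u,x)$ must be joined to \emph{every} $(u_i,y)$; these vertices are split between the two copies of $H^\circ$ according to the $2$-colouring of $H^\circ$, and a single pair of cross edges attaches the copies at just one place along the outer boundary. In the odd-pyramid case the colours of $u_1,\dots,u_m$ alternate around $\partial r$ and your picture closes up into a single cycle with the $(u_i,y)$'s at alternate positions --- which is why your pentagonal-pyramid check succeeds. But as soon as $u$ lies on an even face as well, two consecutive $u_j,u_{j+1}$ separated by that even face receive the same colour, so the colour pattern of the $u_i$'s around $\partial r$ has several maximal monochromatic blocks (there are $|O|-1\ge 3$ sign changes in total), and after your two cross edges the $(u_i,y)$'s lie on several distinct faces of your drawing of $(H-u)\wedge K_2$. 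Your appeal to ``cyclic-order matching'' and to Case~3 of Lemma~\ref{lem:c2} (which treats Condition~\ref{eq:c2}, a genuinely different configuration) does not address this.

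The paper handles exactly this point, and does so without your edge $e$. It takes $H'=H-u$ only, records in \eqref{eq:2cut} that each pair $\{a_{2i},a_{2i+1}\}$ coming from $f\cap f_i$ and $f\cap f_{i+1}$ is a $2$-cut of $H'$ splitting off a \emph{bipartite} piece, and then uses these $2$-cuts to re-embed $H'\wedge K_2$: the bipartite blocks are flipped one at a time so that all the $(u_i,x)$'s land on the boundary of one region and all the $(u_i,y)$'s on another (compare the passage from Figure~\ref{fig:aub3} to Figure~\ref{fig:aub4}). Only then are the two hubs inserted. The flipping uses essentially $|O|-1$ cuts where your plan uses a single deleted edge; that multiplicity is what your construction is missing. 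Consequently your ``main obstacle'' --- the existence of $e$ --- is secondary: even granting such an edge, the drawing you describe need not extend to a planar drawing of $H\wedge K_2$.
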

\begin{proof}
Call
\[f_1,f_2,\dots,f_{2D-1}, \quad D\geq 2,\]
the odd faces of $H$ containing the vertex $u$. The graph
\[H'=H-u\]
is planar and $2$-connected. We denote by $r$ the only region of $H'$ that is not a face of $H$. Note that $H'$ has exactly two odd regions, namely $r$ and $f$. For an illustration of $H,H'$, see Figures \ref{fig:aub1} and \ref{fig:aub2}.
\begin{figure}[h!]
	\centering
	\includegraphics[width=10cm]{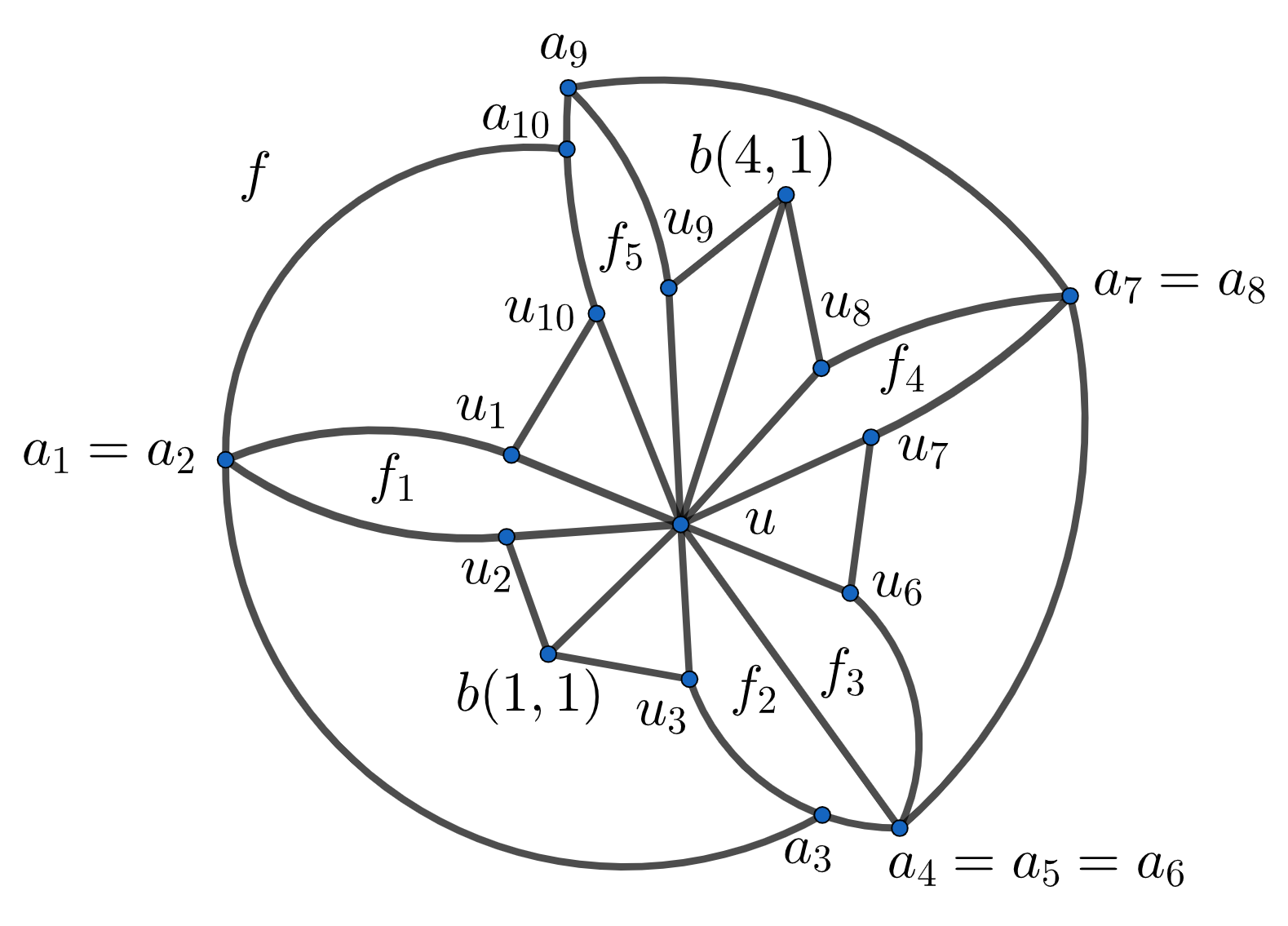}
	\caption{In this sketch for $H$, there are $2D=6$ odd regions. Also, $n_1=n_4=1$, and $n_2=n_3=n_5=0$. Note that $f_2,f_3$ are adjacent faces, hence $a_4=u_4=u_5=a_5$.}
	\label{fig:aub1}
\end{figure}

\begin{figure}[h!]
	\centering
	\includegraphics[width=10cm]{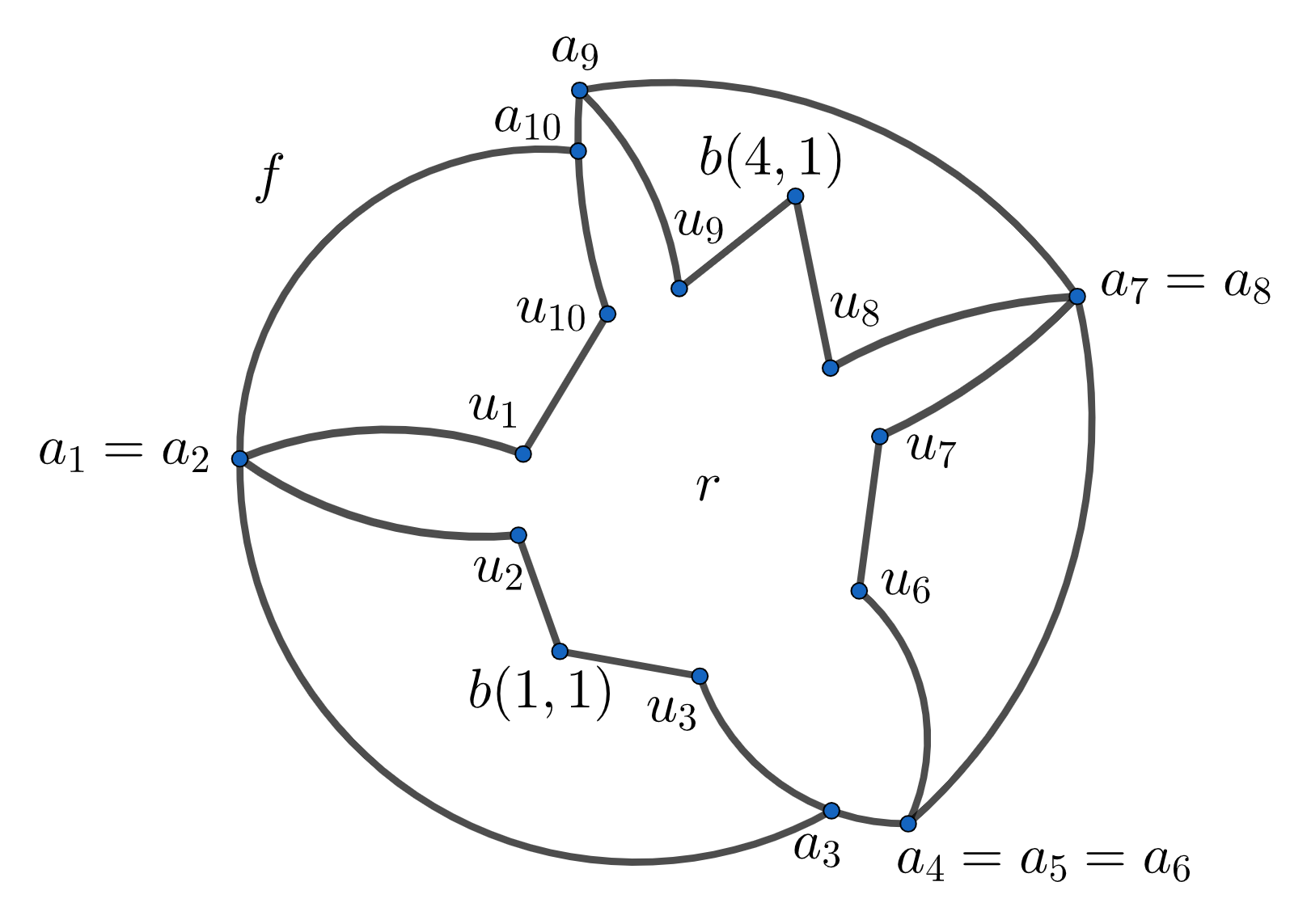}
	\caption{The graph $H'=H-u$, with $H$ as in Figure \ref{fig:aub1}. We have the $2$-cuts $\{a_2,a_3\}$, $\{a_6,a_7\}$, $\{a_8,a_9\}$, and $\{a_{10},a_1\}$, as stated in \eqref{eq:2cut}.}
	\label{fig:aub2}
\end{figure}

We write
\begin{equation}
	\label{eq:verta}
V(r\cap f)=\{a_1,a_2,\dots,a_{4D-2}\},
\end{equation}
where
\[V(f_i\cap f)=\{a_{2i-1},a_{2i}\}, \quad 1\leq i\leq 2D-1.\]
The vertices \eqref{eq:verta} are not necessarily all distinct, however, for all $1\leq i\leq 4D-2$ we have $a_{i}\neq a_{(i+4 \mod 4D-2)}$, otherwise three distinct faces of $H$ would each contain $u$ and $a_{i}$. We record that
\begin{empheq}[box=\fbox]{align}
\label{eq:2cut}
\notag&\text{for each } 1\leq i\leq 4D-2,
\text{ if } a_{2i}\neq a_{(2i+1 \bmod 4D-2)}, \\\notag&\text{then the set } \{a_{2i},a_{(2i+1 \bmod 4D-2)}\} \text{ is a } 2\text{-cut in } H',
\\&\text{and moreover } H'-a_{2i}-a_{(2i+1 \bmod 4D-2)} \text{ is bipartite}.
\end{empheq}

Among other vertices, the boundary of $r$ contains, in this order (up to relabelling)
\begin{align*}
&u_1,a_1,a_2,u_2,b(1,1),b(1,2),\dots,b(1,n_1),u_3,a_3,a_4,u_4,b(2,1),b(2,2),\dots,b(2,n_2),\\&\dots,u_{4D-3},a_{4D-3},a_{4D-2},u_{4D-2},b(2D-1,1),b(2D-1,2),\dots,b(2D-1,n_{2D-1}),
\end{align*}
where $n_1,n_2,\dots,n_{2D-1}\geq 0$,
\[u_{2\ell-1},u_{2\ell}\in V(f_\ell), \quad 1\leq \ell\leq 2D-1\]
(with $u_{2\ell-1},u_{2\ell}$ not necessarily distinct), and moreover
\[u_\ell, \quad 1\leq \ell\leq 4D-2 \qquad \text { and } \qquad b(j,k), \quad 1\leq j\leq 2D-1, \ 1\leq k\leq n_j\]
are the neighbours of $u$ in $H$ (Figures \ref{fig:aub1} and \ref{fig:aub2}).

By the way, if for some $1\leq i\leq 2D-1$ we have $a_{2i}=a_{(2i+1 \bmod 4D-2)}$, then in fact we have
\[a_{2i}=u_{2i}=b(i,1)=b(i,2)=\dots=b(i,n_i)=u_{(2i+1 \bmod 4D-2)}=a_{(2i+1 \bmod 4D-2)}.\]
If this happens for all indices $1\leq i\leq 2D-1$, then $H$ is simply the $2D-1$-gonal pyramid.

Back to the proof, by \eqref{eq:2cut}, $H'\wedge K_2$ is a planar, $2$-connected graph. It may be sketched in the plane such that
\begin{align*}
	&u_2y,b(1,1)y,b(1,2)y,\dots,b(1,n_1)y,u_3y,\\&u_4x,b(2,1)x,b(2,2)x,\dots,b(2,n_2)x,u_{5}x,\\&\dots,u_{4D-2}y,b(2D-1,1)y,b(2D-1,2)y,\dots,b(2D-1,n_{2D-1})y,u_1y,\\&u_2x,b(1,1)x,b(1,2)x,\dots,b(1,n_1)x,u_3x,\\&u_4y,b(2,1)y,b(2,2)y,\dots,b(2,n_2)y,u_{5}y,\\&\dots,u_{4D-2}x,b(2D-1,1)x,b(2D-1,2)x,\dots,b(2D-1,n_{2D-1})x,u_1x
\end{align*}
appear, in this order, around the boundary of a region (Figure \ref{fig:aub3}).
\begin{figure}[h!]
	\centering
	\includegraphics[width=8.5cm]{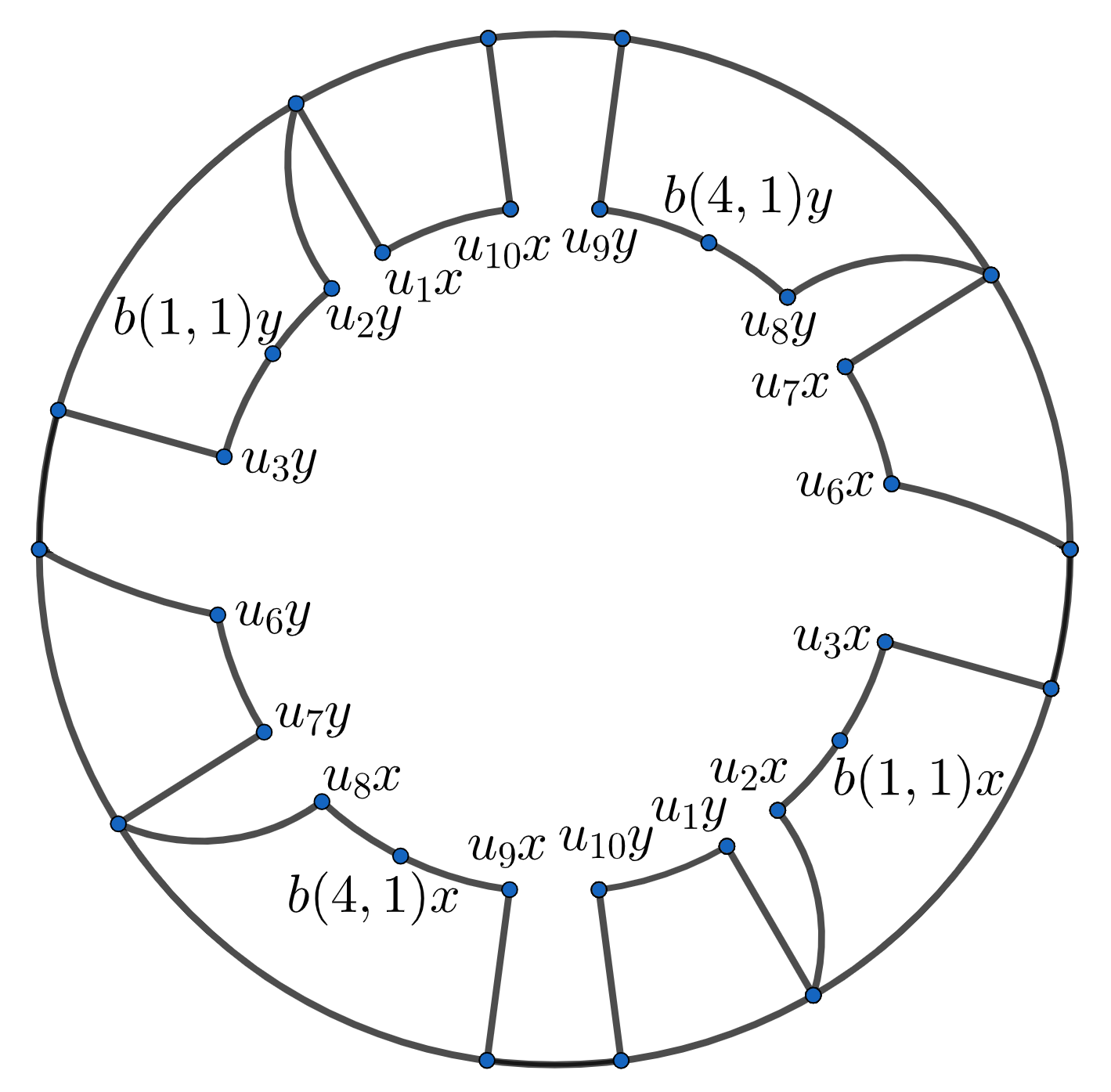}
	\caption{Sketch of $H'\wedge K_2$, with $H'$ as in Figure \ref{fig:aub2}.}
	\label{fig:aub3}
\end{figure}

In fact, due to \eqref{eq:2cut}, we can instead draw $H'\wedge K_2$ so that
\begin{align}
	\notag&u_2x,b(1,1)x,b(1,2)x,\dots,b(1,n_1)x,u_3x,\\\notag&u_6x,b(3,1)x,b(3,2)x,\dots,b(3,n_3)x,u_{7}x,\\\notag&\dots,u_{4D-2}x,b(2D-1,1)x,b(2D-1,2)x,\dots,b(2D-1,n_{2D-1})x,u_1x,\\\notag&u_4x,b(2,1)x,b(2,2)x,\dots,b(2,n_2)x,u_5x,\\\notag&u_8x,b(4,1)x,b(4,2)x,\dots,b(4,n_4)x,u_{9}x,\\&\dots,u_{4D-4}x,b(2D-2,1)x,b(2D-2,2)x,\dots,b(2D-2,n_{2D-2})x,u_{4D-3}x
	\label{eq:uy}
\end{align}
appear, in this order, around the boundary of one region, while
\begin{align}
	\notag&u_2y,b(1,1)y,b(1,2)y,\dots,b(1,n_1)y,u_3y,\\\notag&u_6y,b(3,1)y,b(3,2)y,\dots,b(3,n_3)y,u_{7}y,\\\notag&\dots,u_{4D-2}y,b(2D-1,1)y,b(2D-1,2)y,\dots,b(2D-1,n_{2D-1})y,u_1y,\\\notag&u_4y,b(2,1)x,b(2,2)y,\dots,b(2,n_2)y,u_5y,\\\notag&u_8y,b(4,1)y,b(4,2)y,\dots,b(4,n_4)y,u_{9}y,\\&\dots,u_{4D-4}y,b(2D-2,1)y,b(2D-2,2)y,\dots,b(2D-2,n_{2D-2})y,u_{4D-3}y
	\label{eq:ux}
\end{align}
appear, in this order, around the boundary of another region (Figure \ref{fig:aub4}).
\begin{figure}[h!]
	\centering
	\includegraphics[width=9cm]{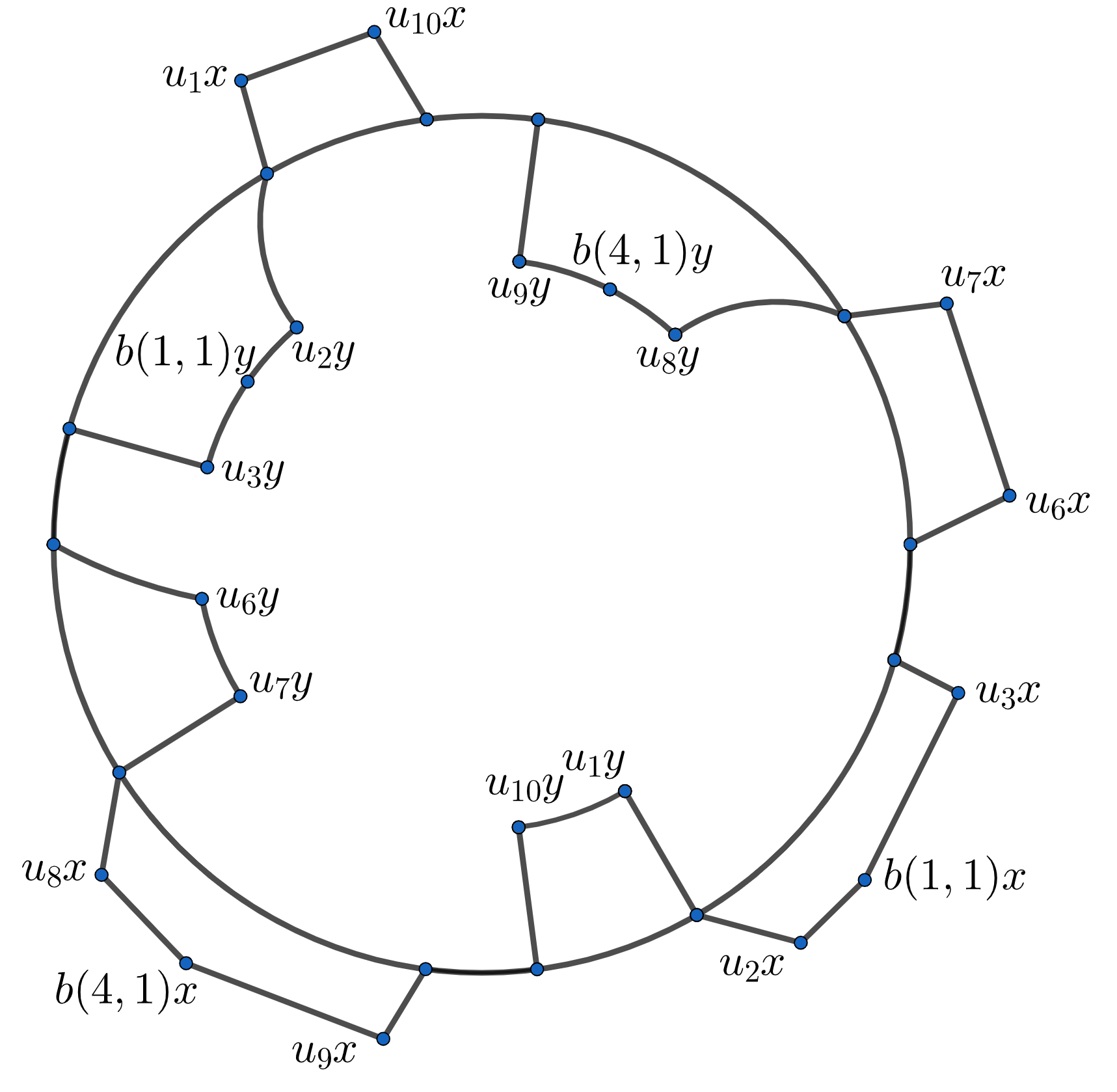}
	\caption{A planar immersion of $H'\wedge K_2$ (Figure \ref{fig:aub3}), that can be extended to a planar immersion of $H\wedge K_2$.}
	\label{fig:aub4}
\end{figure}

Finally, we add to $H'\wedge K_2$ the vertices $ux$ and $uy$, together with edges between $uy$ and each vertex of \eqref{eq:uy}, and edges between $ux$ and each vertex of \eqref{eq:ux}. By construction, $H\wedge K_2$ is indeed a $3$-polytope.
\end{proof}

\section{Proof of Theorems \ref{thm:3} and \ref{thm:4}}
\label{sec:thm3}
It suffices to prove Theorem \ref{thm:4}, since Theorem \ref{thm:3} is a special case of it.

\begin{proof}[Proof of Theorem \ref{thm:4}]
$\Rightarrow$. Let $\calP=H\wedge K_2$ be a $3$-polytope. We already know that $H$ is $2$-connected, of minimum vertex degree $3$. We delete edges from $H$ successively in the following way. At each step, after fixing an odd cycle $\calC$, there exists an edge $e\in\calC$ such that $H-e$ is still $2$-connected: we delete $e$. We continue in this fashion, stopping when we obtain either a bipartite graph, or simply an odd cycle. Later we will show that the scenario of ending up with an odd cycle is actually impossible. We reinsert each deleted edge that keeps the resulting graph bipartite. The obtained graph $H'$ is the sought bipartite, spanning subgraph of $H$. The set of edges that have been removed and not reinserted shall be denoted by
\[E'=\{a_1b_1,a_2b_2,\dots,a_mb_m\},\]
for some non-negative integer $m$. By construction, each $H'+a_ib_i$, $1\leq i\leq m$, is not bipartite.

Recall that
\begin{equation}
	\label{eq:cons2}
	H\wedge K_2=H'\dot\cup H'+(a_1,x)(b_1,y)+(a_1,y)(b_1,x)+\dots+(a_m,x)(b_m,y)+(a_m,y)(b_m,x).
\end{equation}
Since $\calP$ is $3$-connected, we have $m\geq 2$, and up to relabelling $a_1,b_1,a_m,b_m$ are distinct.

Since $\calP$ is planar, so is $H'\dot\cup H'$, hence $H'$ is planar. Again since $\calP$ is planar, all endpoints of elements in $E'$ lie on one region of $H'$, say $r$ (see Figure \ref{fig:fea}). We claim that distinct vertices
\[a_1,a_2,a_3,b_1,b_3,b_2\]
cannot lie on $r$ in this order (Figure \ref{fig:nfea}). Indeed, otherwise
\[\{\{a_1x,a_3x',b_2x''\},\{b_1y,b_3y',a_2y''\}\}\]
would determine an even subdivision of $K(3,3)$, where $x'=x$ and $y'=y$ if the distance on $r$ from $a_1$ to $a_3$ is even, and vice versa otherwise, and $x''=x$ and $y''=y$ if the distance on $r$ from $a_1$ to $a_2$ is odd, and vice versa otherwise. It follows that the endpoints of the elements in $E'$ lie on $r$ either in the order
\begin{equation}
	\label{eq:vertabm1}
	a_1,a_2,\dots,a_m,b_m,b_{m-1},\dots,b_1,
\end{equation}
or
\begin{equation}
	\label{eq:vertabm2}
	a_1,a_2,\dots,a_m,b_1,b_2,\dots,b_m.
\end{equation}
If $H$ is non-planar, then the order is \eqref{eq:vertabm2}.
\begin{figure}[h!]
	\centering
	\begin{subfigure}{0.65\textwidth}
		\centering
		\includegraphics[width=4.5cm]{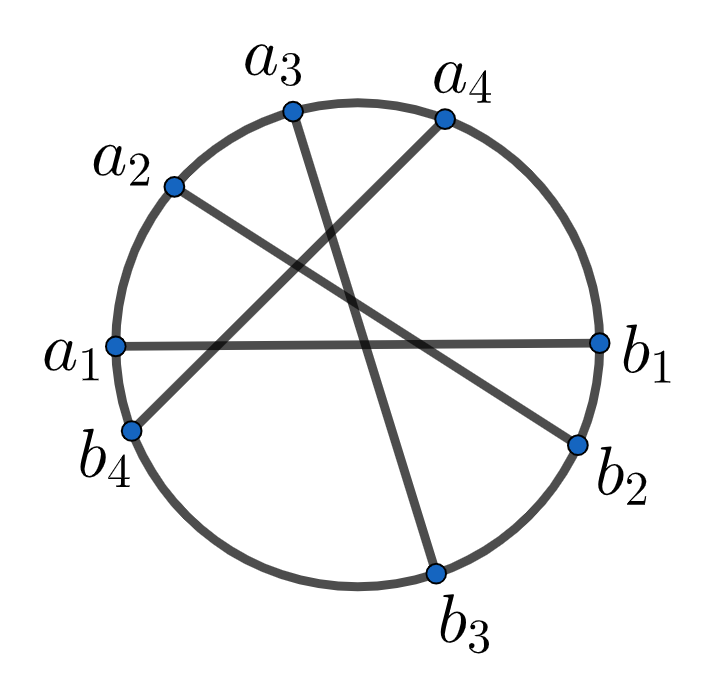}
		\includegraphics[width=4.0cm]{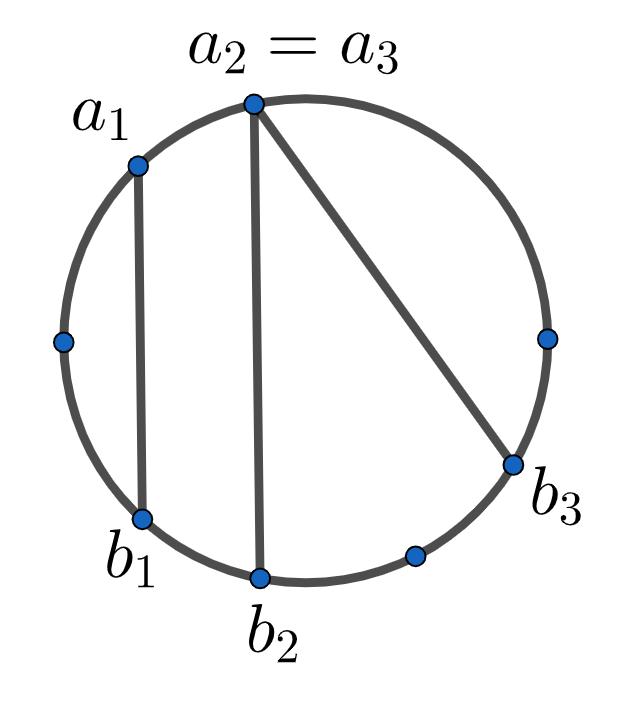}
		\caption{Two feasible orderings for endpoints of $E'$ around $r$.}
		\label{fig:fea}
	\end{subfigure}
	\begin{subfigure}{0.33\textwidth}
		\centering
		\includegraphics[width=4.75cm]{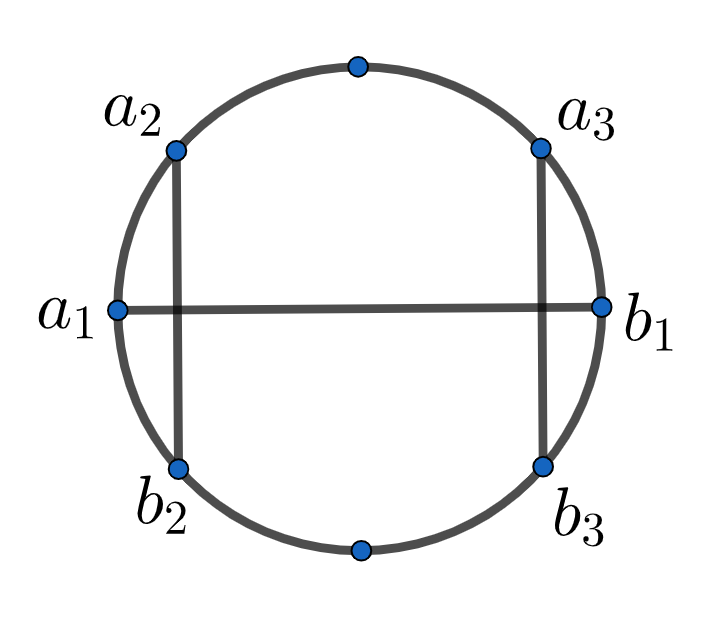}
		\caption{A non-feasible ordering.}
		\label{fig:nfea}
	\end{subfigure}
	\caption{Two feasible and one non-feasible ordering for endpoints of $E'$ around $r$.}
	\label{fig:fnf}
\end{figure}

Next, we apply Lemma \ref{lem:2cut}. Since $\calP$ is $3$-connected, $H'$ is either $3$-connected or semi-hyper-$2$-connected, and any $2$-cut belongs to the region $r$. Moreover, if \[v_1,v_2,\dots,v_{V},a_1,a_2,\dots,a_m,w_1,w_2,\dots,w_{W},b_1,b_2,\dots,b_m\]
lie in this order on the contour of $r$, then no $2$-cut set of $H'$ is a subset of
\[\text{either } \{b_m,v_1,v_2,\dots,v_{V},a_1\} \ \text{ or }\{a_m,w_1,w_2,\dots,w_{W},b_1\}.\]

It remains to show that, when $\calP=H\wedge K_2$ is a $3$-polytope, on removing edges from $H$ as described at the beginning of this proof, we can never end up with just an odd cycle. By contradiction, if we end up with just an odd cycle $\overline\calC$, then the edges of $H$ are either edges of $\overline\calC$, or elements of $E'$. Seeing as $\delta(H)\geq 3$, every vertex on $\overline\calC$ is an endpoint of an edge in $E'$, and in particular we are in scenario \eqref{eq:vertabm2}. Suppose that $a_1,a_2,a_3,b_1,b_2,b_3$ are distinct. Since $\overline\calC$ is odd, $H$ contains an even subdivision of a graph $H''$ that is obtained from $K(3,3)$ by subdividing one edge into a path on two edges (Figure \ref{fig:kpp}). We check that $H''\wedge K_2$ is non-planar, contradiction. Hence w.l.o.g. exactly two of $a_1,a_2,\dots,a_m$ are distinct. Since the length of $\overline\calC$ is at least $5$, then $m\geq 3$, and again w.l.o.g. say $a_1=a_2$, so that $b_1,b_2,b_m$ are distinct. Now
\[\{a_1a_m,a_1b_1,a_1b_2,a_1b_m,a_mb_1,b_1b_2,b_ma_m\}\subset E(H),\]
and moreover there is a $b_2b_m$-path on an even number of vertices along $\overline\calC$, not containing any of $a_1,a_m,b_1$ (Figure \ref{fig:4pyr}). Therefore, $H$ contains an even subdivision of the square pyramid, contradicting Corollary \ref{cor:4pyr}.
\begin{figure}[h!]
	\centering
	\begin{subfigure}{0.33\textwidth}
		\centering
		\includegraphics[width=3cm]{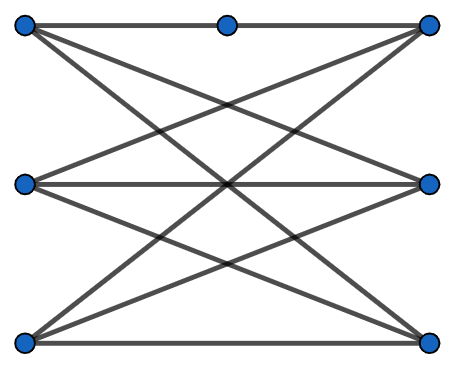}
		\caption{The graph $H''$.}
		\label{fig:kpp}
	\end{subfigure}
	\begin{subfigure}{0.65\textwidth}
		\centering
		\includegraphics[width=4.5cm]{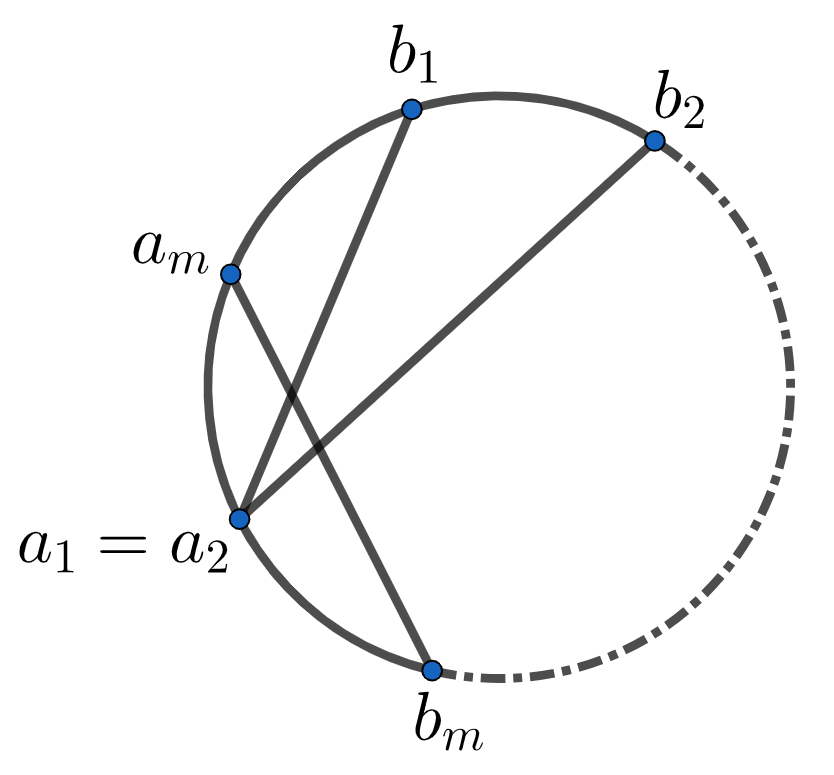}
		\caption{Exactly two of $a_1,a_2,\dots,a_m$ are distinct. Solid lines are edges, and the dash-dotted line a path of even length.}
		\label{fig:4pyr}
	\end{subfigure}
	\caption{Ruling out the scenario where the procedure of removing edges from $H$ terminates simply with an odd cycle $\overline\calC$.}
	\label{fig:oddc}
\end{figure}


$\Leftarrow$. Vice versa, assume the conditions on $H$ of Theorem \ref{thm:4}. Recalling the construction \eqref{eq:cons2}, we see that $H\wedge K_2$ is planar. By contradiction, let $\{ax,bx'\}$ be a $2$-cut in $H\wedge K_2$, where $x'=x$ or $y$. By the construction \eqref{eq:cons2}, and since $H'$ is $2$-connected, $\{a,b\}$ is a $2$-cut in $H$. By assumption, $\{a,b\}$ lies on the region $r$. By assumption and construction, there cannot in fact exist any such $2$-cut in $H\wedge K_2$.
\end{proof}

\appendix
\section{Proof of Propositions \ref{prop:cart} and \ref{prop:strong}}
\label{sec:appa}
Here we prove the characterisation of $3$-polytopal Cartesian and strong products, as stated in section \ref{sec:cs}.

\begin{proof}[Proof of Proposition \ref{prop:cart} -- cf. \cite{behmah}.]
	Let $\calP=A\square B$ be a $3$-polytopal graph. Then $A,B$ are connected. The two factors cannot both contain a cycle, because $K_3\square K_3$ is non-planar. Thus w.l.o.g. $A$ is a tree.
	
	We claim that $B$ has no vertex of degree $1$. By contradiction, call respectively $a,b$ vertices of $A,B$ of degree $1$. Then $\calP$ would have a vertex of degree $2$, namely $(a,b)$, contradiction. In particular, $B$ is not a tree, hence $B$ contains a cycle.
	
	Any tree is a path if and only if it does not contain a copy of $K(1,3)$. Since $B$ is cyclic and $K(1,3)\square K_3$ is not planar, we deduce that $A$ is in fact a path.
	
	One possibility is for $B$ to be a polygon. Then $\calP$ is a $3$-polytope for any choice $\geq 2$ of length for the path $A$ (i.e., $\calP$ is a stacked prism -- for example see Figure \ref{fig:cart2}).
	
	Henceforth, assume that $B$ is not a polygon. Call $T_1,T_2$ respectively the graph with two triangles sharing an edge (diamond graph) and the one with two triangles sharing a vertex. Since $B$ has no degree $1$ vertices, then either $B$ has a subgraph homeomorphic to $T_1$ or $T_2$, or $B$ contains two disjoint cycles. Note that \[T_1\square K(1,2) \qquad \text{and} \qquad T_2\square K(1,2)\]
	are non-planar. Moreover, if $B$ contains two disjoint cycles, then these are connected by a path, because $B$ is connected. Now $T_3\square K(1,2)$ is also non-planar, where $T_3$ is the graph obtained by adding any edge to $K_3\dot\cup K_3$. Therefore, in any case $A=K_2$.
	
	We record that $K_2\square K_4$ and $K_2\square K(2,3)$ are non-planar. But then $B$ contains no subgraph homeomorphic to $K_4$ or $K(2,3)$: in other words, $B$ is outerplanar. Moreover, if $B$ had a separating vertex $s$, then $\calP$ would not be $3$-connected, as
	\[P-(a,s)-(a',s)\]
	would be disconnected, where $a,a'$ are the vertices of $A$. Thereby, $B$ is $2$-connected. In the other direction, a moment of thought reveals that if $B$ is outerplanar and $2$-connected, i.e., a polygon with some added diagonals, then $K_2\square B$ is a $3$-polytope.
\end{proof}

\begin{proof}[Proof of Proposition \ref{prop:strong}]
Jha and Slutzki \cite{jhaslu} showed that, for connected $H,J$, the graph $H\boxtimes J$ is planar if and only if either $H=J=P_3$, or $J=K_2$ and $H$ is a tree. In the first case, $P_3\boxtimes P_3$ is a $3$-polytope. In the second case, we note that, if $a$ is a separating vertex of $H$, then $\{(a,x), (a,y)\}$ is a $2$-cut in $H\boxtimes K_2$. Therefore, the only other strong product that is a candidate for being a $3$-polytope is $K_2\boxtimes K_2$, i.e. the tetrahedron.
\end{proof}

\bibliographystyle{abbrv}
\bibliography{biblio}
\end{document}